\documentclass[11pt,a4paper]{article}
\usepackage[T1]{fontenc}
\usepackage{lmodern,amsmath,amssymb,amsthm,array}
\usepackage[margin=26mm]{geometry}
\usepackage[colorlinks=true,linkcolor=blue,citecolor=blue,urlcolor=blue,hypertexnames=false]{hyperref}
\DeclareMathOperator{\diam}{diam}
\DeclareMathOperator{\dist}{dist}
\DeclareMathOperator{\rad}{rad}
\DeclareMathOperator{\ecc}{ecc}
\makeatletter
\newcommand{\rightharpoonupfill@}{\arrowfill@\relbar\relbar\rightharpoonup}
\DeclareRobustCommand{\rbjt}[1]{\mathpalette{\overarrow@\rightharpoonupfill@}{#1}}
\makeatother
\newtheorem{lemma}{Lemma}[section]
\newtheorem{theorem}[lemma]{Theorem}
\title{An asymptotically tight bound on oriented diameter}
\author{Jiangdong Ai\thanks{School of Mathematical Sciences and LPMC, Nankai University, Tianjin 300071, China. \texttt{jd@nankai.edu.cn}}
\and Yaojun Chen\thanks{School of Mathematics, Nanjing University, Nanjing 210093, China. \texttt{yaojunc@nju.edu.cn}}
\and Yaokun Feng\thanks{Center for Combinatorics and LPMC, Nankai University, Tianjin 300071, China and Department of Applied Mathematics, The Hong Kong Polytechnic University, Hong Kong, China. \texttt{1031679547@qq.com}}
\and Hui Lei\thanks{School of Statistics and Data Science, LPMC and KLMDASR, Nankai University, Tianjin 300071, China. \texttt{hlei@nankai.edu.cn}}
\and Jifu Lin\thanks{Department of Mathematics, East China Normal University, Shanghai 200241, China. \texttt{jifulin01@163.com}}
\and Zijian Ren\thanks{School of Mathematical Sciences and LPMC, Nankai University, Tianjin 300071, China. \texttt{2120250113@mail.nankai.edu.cn}}
\and Xiaolin Wang\thanks{School of Mathematics and Statistics, Fuzhou University, Fuzhou 350108, China. \texttt{xiaolinw92@163.com}}
\and Ruilin Zheng\thanks{School of Mathematics,  Nanjing University, Nanjing 210093, China. \texttt{rlzhengmath@163.com}}}
\date{}
\begin{document}
\maketitle

\begin{abstract}
Let $f(d)$ be the smallest integer such that every finite connected bridgeless simple graph of diameter $d$ admits a strong orientation of diameter at most $f(d)$. Chv\'atal and Thomassen (1978) proved $\lceil d^2/2\rceil+d\le f(d)\le2d^2+2d$. In this paper, we prove that $\lceil d^2/2\rceil+d\le f(d)\le\lceil d^2/2\rceil+d+18$ for every $d\ge2$, which shows that $f(d)=\lceil d^2/2\rceil+d+O(1)$ and determines both the quadratic and linear terms up to a bounded additive error.

The key method of our proof is to construct a central subgraph $H$ that admits a strong orientation of diameter $O(d)$ and is within distance $\lfloor d/2\rfloor$ of every vertex outside it. We obtain the sharp linear coefficient by jointly estimating outside paths and their connecting paths in $H$ at the actual attachment vertices, rebuilding $H$ when necessary.
\end{abstract}

\textbf{Keywords.} Strong orientation, oriented diameter, diameter.

\section{Introduction}

All graphs in this paper are finite and undirected unless an orientation has been specified. We allow parallel edges but no loops, and treat parallel edges as distinct edges. A graph is simple if it has no parallel edges. The main result concerns simple graphs, although auxiliary graphs may have parallel edges. For a connected graph $G$, the distance $\dist_G(x,y)$ is the length of a shortest $x$--$y$ path, and its diameter $\diam(G)$ is the maximum of these distances. An edge is a bridge if its deletion disconnects the graph, and a graph is bridgeless if it has no bridge. An orientation $\rbjt{G}$ assigns one direction to each edge independently; in particular, parallel edges in an auxiliary graph may receive opposite directions. The orientation is strong if every ordered pair of vertices can be joined by a directed path. In 1939, Robbins~\cite{Robbins1939} proved that a connected graph has a strong orientation if and only if it is bridgeless. For a strong orientation $\rbjt{G}$, the directed distance $\dist_{\rbjt{G}}(x,y)$ is the length of a shortest directed $x$--$y$ path, and the directed diameter $\diam(\rbjt{G})$ is the maximum directed distance. The oriented diameter of a connected bridgeless graph is $\rbjt{\diam}(G)=\min_{\rbjt{G}}\diam(\rbjt{G})$, where the minimum is over all strong orientations of $G$.

By Robbins' theorem, a natural question is how small the diameter of a strong orientation can be, and whether it can be bounded in terms of the diameter of the underlying graph. Clearly, $\rbjt{\diam}(G)\ge\diam(G)$. Chv\'atal and Thomassen~\cite{ChvatalThomassen1978} initiated the study of this question. For an integer $d\ge0$, let $f(d)$ be the smallest integer such that every finite connected bridgeless simple graph of diameter $d$ admits a strong orientation of diameter at most $f(d)$. Equivalently, $f(d)$ is the largest oriented diameter among these graphs. Chv\'atal and Thomassen~\cite{ChvatalThomassen1978} proved the bounds $\lceil d^2/2\rceil+d\le f(d)\le2d^2+2d$ for $d\ge2$. The lower bound comes from a construction, and the upper bound follows from their radius theorem. The lower-bound construction for odd $d$ was subsequently corrected by Kwok, Liu and West~\cite{KwokLiuWest2010}. Determining the coefficient of $d^2$ has therefore been a main question in the study of oriented diameter.

In 2021, Babu, Benson, Rajendraprasad and Vaka~\cite{BabuEtAl2021} improved the upper bound for $d\ge8$ to $f(d)\le1.373d^2+6.971d-1$. More recently, Ai, Lei and Ren~\cite{AiLeiRen2026} proved $f(d)\le d^2+\lceil11d/2\rceil-5$ for $d\ge3$. These results reduced the gap in the general bounds to $$\lceil d^2/2\rceil+d\le f(d)\le d^2+\lceil11d/2\rceil-5.$$
Alongside these general bounds, the exact values for small diameters have received considerable attention. Chv\'atal and Thomassen~\cite{ChvatalThomassen1978} proved $f(2)=6$. For diameter $3$, Kwok, Liu and West~\cite{KwokLiuWest2010} established $9\le f(3)\le11$, and Wang and Chen~\cite{WangChen2022} subsequently determined $f(3)=9$. The next case, diameter $4$, has seen several recent advances~\cite{BabuEtAl2021,FengEtAl2026,LinYou2026,LinWangYou2026}. Feng, Lei, Lian and Ren~\cite{FengEtAl2026} proved $f(4)\le16$. Together with the general lower bound, this gives $12\le f(4)\le16$, leaving the exact value undetermined. These results show that determining $f(d)$ for an individual small value of $d$ requires sharper estimates than the general quadratic bounds provide.

For simple graphs, oriented diameter has also been studied in terms of other graph parameters. Bounds involving the minimum degree were obtained by Bau and Dankelmann~\cite{BauDankelmann2015} and Surmacs~\cite{Surmacs2017}, with recent progress by Cochran and Wang~\cite{CochranWang2026}. Bounds involving the maximum degree, and the maximum degree together with girth, appear in~\cite{DankelmannGuoSurmacs2018,ChenChang2023}. The domination number $\gamma(G)$ is the minimum size of a vertex set containing a neighbour of every vertex outside it. Fomin, Matamala, Prisner and Rapaport~\cite{FominEtAl2004} proved $\rbjt{\diam}(G)\le9\gamma(G)-5$. Kurz and L\"atsch~\cite{KurzLatsch2012} improved this to $4\gamma(G)$, and Wang and Chen~\cite{WangChen2025} established the sharp bound $\lceil(7\gamma(G)+1)/2\rceil$. Connected domination and distance domination were considered by Dankelmann, Morgan and Rivett-Carnac~\cite{DankelmannEtAl2024}.

There is also an extensive literature on particular graph classes, including complete $k$-partite graphs for $k\ge2$~\cite{G,LS,P}, AT-free graphs~\cite{FominEtAl2004}, chordal graphs~\cite{FominMatamalaRapaport2004}, maximal outerplanar graphs~\cite{WangEtAl2021}, near planar triangulations~\cite{GeLiuWang2025}, and planar triangulations~\cite{Liu2026,Wang}. In this paper, we prove the following theorem.

\begin{theorem}\label{thm:main}
Every finite connected bridgeless simple graph $G$ of diameter $d$ satisfies
\[
\rbjt{\diam}(G)\le\left\lceil\frac{d^2}{2}\right\rceil+d+18.
\]
\end{theorem}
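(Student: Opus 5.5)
We follow the strategy announced in the abstract. The first step is to build a \emph{central subgraph} $H\subseteq G$ with two properties. First, $H$ is bridgeless and has a strong orientation $\rbjt{H}$ with $\diam(\rbjt{H})\le cd$ for an absolute constant $c$. Second, every vertex of $G$ lies within distance $r:=\lfloor d/2\rfloor$ of $V(H)$.

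To get $H$, we start from a vertex $v$ and a shortest cycle (or ear structure) through it. We then grow $H$ greedily. Whenever a vertex $x$ has $\dist_G(x,H)>r$, we take a shortest $x$--$H$ path $P$ of length $r+1$. Since $G$ is bridgeless, $P$ extends to an ear $Q$ leaving and returning to $H$. Its length is at most about $2r+2+d$: the ear closes using a shortest path in $G-e$ for the first edge $e$ of $P$, and diameter $d$ controls that path. We orient $Q$ as a directed ear. Each added ear reaches a ball of radius about $r$ that was previously unreached. Since $\diam(G)=d$, a packing argument then bounds the number of ears by a constant; the radius-$r$ balls around the reached far vertices are disjoint and all lie within distance $d$ of each other. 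This gives the $O(d)$ diameter of $\rbjt{H}$.

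Second, we orient the rest of $G$ as Chv\'atal and Thomassen do, by ears hanging from $H$. For each vertex $x\notin V(H)$ we take a shortest path to $H$ of length at most $r$ and bridge it using bridgelessness. This gives an ear $E_x$ through $x$ with both ends on $H$. We orient $E_x$ so that, by induction on distance from $H$, every vertex at distance $k$ can reach $H$ and be reached from $H$ within roughly $k^2/2+O(k)$. The Chv\'atal--Thomassen analysis of a radius-$r$ neighbourhood yields exactly this quadratic-in-$r$ cost: the ear through a vertex at distance $k$ has length at most about $2k+1$, and the recursion sums to about $r^2$ for a round trip. Combining out-paths and in-paths for two vertices $x,y$ gives $\dist(x,H)+\dist_{\rbjt{H}}(a,b)+\dist(H,y)$. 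Naively this is about $2\cdot(r^2+r)+cd=d^2/2+O(d)$, which has the correct quadratic term.

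The main obstacle is the linear term. We need the total to be $\lceil d^2/2\rceil+d+O(1)$, not $d^2/2+(c+1)d$, so the $O(d)$ diameter of $H$ must essentially be absorbed. The plan is to avoid adding worst cases independently. We estimate jointly the outside path from $x$ to its actual attachment vertex $a$, the path inside $\rbjt{H}$ from $a$ to $b$, and the path from $b$ to $y$. The key point is that if $x$ attaches deep in the quadratic regime, its path to $H$ is long but lands near a specific ear. If the route inside $H$ is long, then $x$ and $y$ are far apart in $G$, and this forces the outside parts to be short because $\dist_G(x,y)\le d$. Where this trade-off fails, because some attachment vertex is badly placed, we \emph{rebuild} $H$ by rerooting it at the offending ear, or by adding that ear to $H$. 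We must check that rebuilding terminates and preserves the bounds, and we book-keep parity and floors so that the constants sum to at most $18$. I expect this joint estimate with the rebuilding step to be by far the hardest part. For small $d$ the bound is weaker than known results such as $f(2)=6$, so the argument may assume $d$ is large whenever convenient.
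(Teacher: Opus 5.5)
Your outline follows the architecture in the abstract, but the step producing $H$ is argued incorrectly and the step producing the linear coefficient is missing. The packing argument fails because graphs have no packing bound. For example, $k$ cycles of length $2r$ glued at one vertex form a bridgeless graph of diameter $2r$ with $k$ vertices pairwise at distance $2r$, hence $k$ pairwise disjoint balls of radius $r-1$, for every $k$. So nothing in your argument bounds the number of ears. Since later ears may attach to earlier ones, the directed diameter of their union can grow along such chains. Thus $\diam(\rbjt{H})=O(d)$ is not established, and this existence statement is exactly what gives the coefficient $1/2$. (Your ear-length estimate is also off: in $C_{2d+1}$ a shortest path in $G-e$ between the ends of $e$ has length $2d$, and in general such a path may meet $P$ or $H$ internally.)

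The paper builds $H$ differently. It fixes a centre $p$, a farthest vertex $q$ and a geodesic $Q$ between them, and splits $V(G)$ by a threshold on $\dist(p,v)-\dist(q,v)$. It then takes $H$ to be the union of cycles, one per noncritical crossing edge, formed from two fixed shortest-path trees and closed along $Q$, all through one common reversed critical edge. This gives strong connectivity at once and the potential bound $\dist_{\rbjt{H}}(u,v)\le\varphi(v)-\varphi(u)+d+2h$. Because every crossing edge lies in $H$, it also gives an exact formula for the depth of every outside vertex.

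For the linear term, your plan fails in specific ways. For even $d=2r$, a uniform depth bound $r$ with the Chv\'atal--Thomassen bound $r(r+1)$ in each direction already permits outside paths of total length $\lceil d^2/2\rceil+d$. That leaves only $O(1)$ for the route in $H$. The paper's threshold makes one side have depth at most $r-1$ when $\rad(G)>r$, and settles $\rad(G)=r$ directly at a centre. Your trade-off is also false: a long directed route in $\rbjt{H}$ between two attachments says nothing about $\dist_G(x,y)$, which is at most $d$ for every pair anyway.

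The actual mechanism is cancellation. Write $d=2r+\varepsilon$ and $\rad(G)=r+t+\varepsilon$. The auxiliary depths are exactly $\dist(q,v)-r-1$ on one side and $\dist(p,v)-t-\varepsilon$ on the other. Hence a restored path has length at most its auxiliary length, plus a constant, plus or minus $\varphi$ of its actual attachment, and these terms cancel against the bound in $H$. The remaining $d+2h$ is paid for by four things:
\begin{itemize}
\item sharper rooted bounds for paths beginning with an arm;
\item savings of about $2t$ derived from the radius;
\item removal of the term $d$ at attachments nearest the boundary;
\item a special choice of $Q$ for the critical pair.
\end{itemize}

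If the resulting saving $\delta$ is at most $r+t-9$, the paper extracts a new farthest vertex $w$ of the same centre. It comes with a connected set $U$, an attachment $\xi$, and a lower bound, close to $2\dist(\xi,w)$, on paths through $U$ from $w$ to the other attachments. The paper then rebuilds $H$ along a $p$--$w$ geodesic through $\xi$. Termination is avoided by choosing a configuration of minimum saving among all those reachable by finitely many rebuildings. The next rebuilding then has $\delta'\ge\delta$, which a separate estimate shows is enough. Your ``reroot at the offending ear'' contains none of this.

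Finally, you cannot assume $d$ is large. The known bound $d^2+\lceil 11d/2\rceil-5$ already exceeds $\lceil d^2/2\rceil+d+18$ at $d=5$, and the paper's argument runs for all $d\ge3$.
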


Together with the known lower bound, Theorem~\ref{thm:main} yields $\lceil d^2/2\rceil+d\le f(d)\le\lceil d^2/2\rceil+d+18.$ Consequently, $f(d)=\lceil d^2/2\rceil+d+O(1)$. 

The framework of our proof is inspired by the contraction method used in~\cite{BabuEtAl2021,WangChen2025}. The construction of the central subgraph is also motivated by the proof for graphs of diameter $3$ in~\cite{WangChen2022}, where the vertex set is partitioned according to the distances from the endpoints of a selected edge. Here we choose a centre $p$, meaning a vertex whose maximum distance to the other vertices is as small as possible, and a farthest vertex $q$ of $p$. We partition $V(G)$ into two parts according to their relative distances from $p$ and $q$, and fix a shortest $p$--$q$ path. Using parts of two fixed shortest-path trees, we join each crossing edge other than one specified edge to a segment of this path. The resulting directed cycles share the specified edge. Their union is a subgraph $H$ with a strong orientation, and it contains every edge between the parts. The two essential properties of $H$ are that every vertex outside $H$ is within distance $\lfloor d/2\rfloor$ of $H$, and $H$ has a strong orientation of diameter $O(d)$. We extend the orientation of $H$ using rooted auxiliary graphs for the components outside it. Their rooted path bounds supply the quadratic term. To obtain the linear coefficient $1$, we retain the actual attachment of every selected outside path and estimate its length together with the path joining the attachments in $H$. If the initial estimate is insufficient, a long selected path yields another farthest vertex of the same centre. Lower bounds for paths whose internal vertices lie in a suitable connected vertex set then allow us to rebuild $H$ with better bounds on its connecting paths. Among all choices obtainable by finitely many rebuildings, we choose one with minimum saving from the general rooted path bounds. The next rebuilding cannot decrease this saving, which supplies the comparison needed for the linear estimate. The specified-root form of the radius theorem of Chv\'atal and Thomassen~\cite{ChvatalThomassen1978}, stated explicitly by Dankelmann, Morgan and Rivett-Carnac~\cite{DankelmannEtAl2024}, handles the small-diameter and minimum-radius cases.

We finish this section with the notation used below. A path has no repeated vertex, whereas a walk may repeat vertices. Its length is its number of edges or arcs, and paths of length zero are allowed. For a path $Q$, the notation $|Q|$ means its length. The eccentricity of a vertex $u$ is $\ecc_G(u)=\max_v\dist_G(u,v)$, and the radius is $\rad(G)=\min_u\ecc_G(u)$. A centre attains the radius, and a geodesic is a shortest path. An attachment of a vertex set is a vertex outside it adjacent to a vertex in it. Distances without a subscript are in the original graph. We write $(x)_+=\max\{x,0\}$. After estimating the lengths of its segments, a directed walk may be shortened by deleting closed subwalks. The one-vertex graph has oriented diameter zero.

In the rest of this paper, Section~\ref{sec:central-core} constructs the initial central subgraph and its strong orientation. Section~\ref{sec:rooted-orientations} proves the rooted path bounds while retaining the attachment information. Section~\ref{sec:joining-paths} joins these paths through the central subgraph and identifies when its initial estimate suffices. Section~\ref{sec:rebuilding} obtains a new farthest vertex, rebuilds the central subgraph, and proves Theorem~\ref{thm:main}.

\section{A central  subgraph}\label{sec:central-core}

The main construction in this section is Lemma~\ref{lem:central-core}. We divide the vertices into two parts and construct a strongly oriented subgraph $H$ containing every edge between them. To construct $H$, we join each crossing edge other than one specified edge to a suitable segment of a fixed shortest path. The resulting directed cycles share that specified edge. Their union is therefore strong, and each component outside $H$ lies entirely in one part. Lemma~\ref{lem:shallow-attachments} improves the distances at attachments nearest the boundary, and Lemma~\ref{lem:axis-choice} controls the one exceptional pair on the fixed shortest path. Section~\ref{sec:joining-paths} combines these estimates with the outside paths.

Write $d=2r+\varepsilon$, where $r=\lfloor d/2\rfloor$ and $\varepsilon\in\{0,1\}$, and write $\rho=\rad(G)=r+t+\varepsilon$. Since $d\le2\rho$ and $\rho\le d$, we have $0\le t\le r$. Thus $t$ measures how far the radius exceeds its least possible value. Choose a centre $p$, a vertex $q$ with $\dist(p,q)=\rho$, and a shortest $p$--$q$ path $Q=Q_0\cdots Q_\rho$, where $Q_0=p$ and $Q_\rho=q$. The functions $a(v)=\dist(p,v)$ and $b(v)=\dist(q,v)$ satisfy $a(v)\le\rho$, $b(v)\le d$, and $a(v)+b(v)\ge\rho$. When $d$ is even, assume $t\ge1$; the case $t=0$ follows directly from the radius theorem in Theorem~\ref{thm:radius}.

Define $c=t-1+\varepsilon$ and $\theta=t-r-2+2\varepsilon$. The two parts are $X=\{v:a(v)-b(v)\le\theta\}$ and $Y=V(G)\setminus X$. This threshold is chosen to bound the distances from outside vertices to $H$ by $r-1+\varepsilon$ on $X$ and by $r$ on $Y$. The $X$ bound is one smaller when $d$ is even.
An edge with one end in each part is a crossing edge. Since $a(Q_i)=i$ and $b(Q_i)=\rho-i$, the vertex $Q_i$ belongs to $X$ precisely when $2i\le\rho+\theta$. The equality $\lfloor(\rho+\theta)/2\rfloor=c$ therefore shows that $Q_0,\ldots,Q_c$ lie in $X$ and $Q_{c+1},\ldots,Q_\rho$ lie in $Y$. Here $0\le c<\rho$, so $Q_cQ_{c+1}$ is the unique crossing edge on $Q$. We call it critical; every other crossing edge is noncritical.

We will also use two consequences of this partition. If $x\in X$, then $2b(x)=(a(x)+b(x))-(a(x)-b(x))\ge\rho-\theta=2r+2-\varepsilon$, so the integer $b(x)$ is at least $r+1$. If $y\in Y$, then $a(y)-b(y)\ge\theta+1$, and hence $2a(y)\ge\rho+\theta+1=2t-1+3\varepsilon$. Thus $a(y)\ge t+\varepsilon=c+1$.

\begin{lemma}\label{lem:central-core}
There is a connected bridgeless subgraph $H$ containing every crossing edge, with $Q\cap H$ a subpath of length $1\le h\le\rho$. It has a strong orientation whose only edge from $Y$ to $X$ is $Q_{c+1}\to Q_c$. Define $\varphi(v)=a(v)$ for $v\in X$ and $\varphi(v)=d+\rho-b(v)$ for $v\in Y$. This orientation satisfies
\begin{equation}\label{eq:core-distance}
\dist_{\rbjt{H}}(u,v)\le\varphi(v)-\varphi(u)+d+2h\qquad(u,v\in V(H)).
\end{equation}
Every component of $G-V(H)$ and all its attachments lie in one part.
\end{lemma}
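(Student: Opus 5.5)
The plan is to build $H$ as a union of directed cycles, all passing through the critical arc $Q_{c+1}\to Q_c$. Each cycle is cut out by two shortest-path trees. Fix a breadth-first tree $T_p$ rooted at $p$ and a breadth-first tree $T_q$ rooted at $q$, both chosen to contain $Q$.

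\textbf{Monotonicity.} The first observation is that tree paths stay inside their parts. Along a geodesic from a vertex $x$ towards $p$, the value $a$ drops by exactly $1$ per step while $b$ changes by at most $1$. Hence $a-b$ is non-increasing, and the $T_p$-path from $x\in X$ to $p$ stays in $X$. Symmetrically, the $T_q$-path from $y\in Y$ to $q$ stays in $Y$. The $T_p$-path from $p$ to $x$ shares an initial segment with $Q$ and leaves it at some $Q_i$ with $i\le c$. Likewise the $T_q$-path from $y$ meets $Q$ at some $Q_j$ with $j\ge c+1$.

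\textbf{Building the cycles and orienting $H$.} For each noncritical crossing edge $xy$ with $x\in X$ and $y\in Y$, form the cycle
\[
Q_{c+1}\to Q_c\to\cdots\to Q_i\to(\text{$T_p$ down to }x)\to y\to(\text{$T_q$ up to }Q_j)\to Q_{j-1}\to\cdots\to Q_{c+1}.
\]
Let $H$ be the union of these cycles. At least one such cycle exists: $Q_cQ_{c+1}$ is not a bridge, so the edge cut between $X$ and $Y$ contains another edge. The orientations agree on shared edges:
\begin{itemize}
\item every $Q$-edge is oriented towards $p$;
\item every non-$Q$ edge of $T_p$ is oriented away from $p$;
\item every non-$Q$ edge of $T_q$ is oriented towards $q$;
\item every crossing edge is oriented from $X$ to $Y$, except the critical one.
\end{itemize}
So the cycles are consistently oriented. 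Their union is strong because all cycles share the critical arc, and bridgeless because it is a union of cycles. It contains every crossing edge, and its only $Y\to X$ arc is $Q_{c+1}\to Q_c$. Finally, $Q\cap H$ is the segment from the smallest $i$ to the largest $j$ used. This is a subpath containing the critical edge, so $1\le h\le\rho$.

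\textbf{The distance bound \eqref{eq:core-distance}.} Route every $u\to v$ walk through the critical arc. From $u$, follow $u$'s cycle forward to $Q_{c+1}$, take $Q_{c+1}\to Q_c$, then follow $v$'s cycle from $Q_c$ to $v$. The estimates on the pieces are as follows.
\begin{itemize}
\item For $u\in X$ on the $T_p$-branch to $x$, the forward piece has length $a(x)-a(u)+1+(b(y)-b(Q_j))$ plus at most $h$ along $Q$.
\item For $u\in Y$ it is $b(u)-b(Q_j)$ plus at most $h$ along $Q$.
\item Symmetrically, reaching $v\in X$ from $Q_c$ costs at most $h+a(v)-a(Q_i)$.
\item Reaching $v\in Y$ from $Q_c$ goes through its $x'$, then $y'$, then down $T_q$ to $v$. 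Here it is better to write the descent as $b(y')-b(v)$.
\end{itemize}
Then use $a\le\rho$ and $b\le d$ together with the identities $a(Q_i)=i$ and $b(Q_j)=\rho-j$ to telescope. The sum collapses to $\varphi(v)-\varphi(u)+d+2h$ in each of the four cases $u,v\in X/Y$. The constant $d+\rho$ in $\varphi$ on $Y$ is exactly what absorbs the $b$-terms. I expect this bookkeeping to be the main obstacle. The delicate cases are $u\in Y$, $v\in X$ (short, through the critical arc) and $u\in X$, $v\in Y$ when their cycles differ, where one must check that the detour through $Q$ costs at most $2h$ and not more. If the naive route overshoots, I would shortcut by leaving $u$'s cycle directly at its crossing edge and entering $v$'s cycle via $Q$. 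The walk may then be shortened by removing closed subwalks, as allowed.

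\textbf{Components outside $H$.} Every crossing edge lies in $H$. Hence an edge of $G-V(H)$, or an edge from a component of $G-V(H)$ to one of its attachments, is never crossing. Each component, together with all its attachments, therefore lies in a single part.
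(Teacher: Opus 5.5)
\textbf{The gap: the crossing-edge bound $m\le d$.} Your construction of $H$ matches the paper's and is fine: the two trees containing $Q$, the monotonicity of $a-b$, the cycles through $Q_{c+1}\to Q_c$, the consistent directions, strong connectivity, bridgelessness, $Q\cap H$ being an interval, and the outside components. The gap is in the distance bound \eqref{eq:core-distance}.

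If you count exactly, each noncritical crossing edge $xy$ on your route enters through $m=a(x)+1+b(y)-\rho$. Take $u,v\in X$ off $Q$, with $u$'s cycle returning to $Q$ at $Q_j$ and $v$'s branch leaving $Q$ at $Q_{i'}$. The route $u\to x\to y\to Q_j\to\cdots\to Q_{c+1}\to Q_c\to\cdots\to Q_{i'}\to v$ has length exactly $\varphi(v)-\varphi(u)+m+2(j-i')$. For $u,v\in Y$ you get $\varphi(v)-\varphi(u)+m'+2(j-i')$. For $u\in X$, $v\in Y$ on different cycles you get $\varphi(v)-\varphi(u)+m+m'-d+2(j-i')$.

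A crossing arc cannot be avoided here. From $X$ off $Q$ one can only move away from $p$ or cross to $Y$, and the only arc back to $X$ is the critical one. Since $j-i'$ can equal $h$, you need precisely $m\le d$ for every noncritical crossing edge.

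The inequalities you propose, $a\le\rho$ and $b\le d$, give only $m\le d+1$. Your sums then come out as $d+2h+1$, or $d+2h+2$ in the mixed case, rather than $d+2h$. The missing step is where the threshold $\theta$ enters:
\begin{itemize}
\item for odd $d$, $y\in Y$ forces $b(y)\le a(y)-\theta-1\le\rho-\theta-1=d-1$;
\item for even $d$, $x\in X$ forces $a(x)\le b(x)+\theta\le d+\theta=\rho-2$.
\end{itemize}
Either way $m\le d$. Also $m\ge0$, since $p$ to $x$ to $y$ to $q$ is a walk.

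\textbf{Secondary problems and the paper's cleaner route.} Your per-piece estimates (``at most $h$ along $Q$'' in each half) are too coarse to telescope. The two $Q$-segments together with the terms $b(Q_j)=\rho-j$ and $a(Q_{i'})=i'$ contribute exactly $2(j-i')$, and bounding each half separately overcounts. Endpoints on $Q$ are also left to unspecified shortcutting. For example, for $u=Q_k\in X$, following $u$'s cycle forward first runs back to $Q_i$ before going around.

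The paper avoids the four-case bookkeeping altogether. It writes each arc's length as its increase in $\varphi$ plus a correction:
\begin{itemize}
\item $0$ on tree arcs;
\item $m-d\le0$ on noncritical crossing arcs;
\item $2$ on each reversed $Q$-edge inside one part;
\item $d+2$ on $Q_{c+1}\to Q_c$.
\end{itemize}
A simple directed $u$--$v$ path uses at most $h-1$ of the $+2$ arcs and the critical arc at most once. Its length is therefore at most $\varphi(v)-\varphi(u)+d+2h$, uniformly for all endpoint positions. This argument also rests on $m\le d$, which is exactly the step you need to add.
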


\begin{proof}
We first specify a cycle for each noncritical crossing edge. We then check that their directions agree and that their union has the required distance bound.

Fix a shortest-path tree rooted at $p$ and a shortest-path tree rooted at $q$, both containing $Q$. Such trees exist because $Q$ is a shortest path: at each vertex outside $Q$, choose an edge to the preceding distance layer, and on $Q$ choose its preceding edge towards the relevant root. Following the chosen edges decreases the distance at every step and therefore reaches the root. Each nonroot vertex chooses one distinct edge, so there are $|V(G)|-1$ chosen edges. They consequently form a spanning tree, and every tree path to the root is shortest. The chosen trees remain fixed for all crossing edges.

Consider a crossing edge $xy$ with $x\in X$ and $y\in Y$. Along the $p$--$x$ tree path towards $x$, the value of $a$ increases by one at each edge, while the value of $b$ changes by at most one. Therefore $a-b$ never decreases along this path. Since its final vertex $x$ has $a(x)-b(x)\le\theta$, every vertex of the path lies in $X$. Along the $y$--$q$ tree path towards $q$, the value of $b$ decreases by one, while $a$ changes by at most one. Consequently, $a-b$ again never decreases. Since $y\in Y$, every vertex of this path lies in $Y$.

Suppose now that $xy$ is noncritical. Let $Q_\lambda$ be the last vertex of $Q$ on the $p$--$x$ tree path, and let $Q_\nu$ be the first vertex of $Q$ on the $y$--$q$ tree path. Because the first tree contains $Q$, its path from $p$ to $Q_\lambda$ is exactly $Q_0\cdots Q_\lambda$. Similarly, the second tree path from $Q_\nu$ to $q$ is $Q_\nu\cdots Q_\rho$. The two chosen paths therefore meet $Q$ in an initial segment and a final segment, respectively. Their parts outside $Q$ have no vertex of $Q$ in their interiors. Moreover, $Q_\lambda\in X$ and $Q_\nu\in Y$, so $\lambda\le c<\nu$.

Follow the first tree from $Q_\lambda$ to $x$, traverse $xy$, and follow the second tree from $y$ to $Q_\nu$. This is a path: its first tree segment lies in $X$, its second tree segment lies in $Y$, and each segment has no repeated vertex. Its only vertices on $Q$ are its two ends. Joining it to the segment $Q_\lambda\cdots Q_\nu$ of $Q$ therefore forms a cycle. Orient the tree segments and $xy$ from $Q_\lambda$ towards $Q_\nu$, and orient the segment of $Q$ from $Q_\nu$ back towards $Q_\lambda$. This is a directed cycle containing $Q_{c+1}\to Q_c$.

Perform this construction for every noncritical crossing edge. These directions agree on shared edges. Indeed, each retained edge outside $Q$ with both ends in $X$ belongs to the fixed $p$-tree and is directed away from $p$. Each retained edge outside $Q$ with both ends in $Y$ belongs to the fixed $q$-tree and is directed towards $q$. Every noncritical crossing edge is directed from $X$ to $Y$, and every retained edge of $Q$ is directed towards $p$. Thus each edge receives the same direction in every cycle containing it.

There is at least one noncritical crossing edge. Otherwise deleting the critical edge would leave no edge between the nonempty sets $X$ and $Y$, making it a bridge of $G$. Let $H$ consist of the edges and vertices of all the constructed cycles, with the directions just specified. Only the tree segments used in these cycles are retained. Every cycle contains the critical edge, so their union is connected. Also, every edge of $H$ belongs to a cycle, and hence $H$ is bridgeless. To see that its orientation is strong, take any vertices $u,v\in V(H)$ and choose a constructed cycle through each of them. The first cycle gives a directed walk from $u$ to $Q_c$, and the second gives a directed walk from $Q_c$ to $v$. Their concatenation can be shortened to a directed $u$--$v$ path.

Each noncritical crossing edge belongs to its own constructed cycle, and the critical edge belongs to every such cycle. Thus $H$ contains all crossing edges, and its only edge directed from $Y$ to $X$ is the critical edge. Its intersection with $Q$ is the union of the segments $Q_\lambda\cdots Q_\nu$. Since each segment contains $Q_cQ_{c+1}$, this union is a single subpath. Denote its length by $h$. Then $1\le h\le\rho$.

We next prove \eqref{eq:core-distance}. For a crossing edge $xy$, where $x\in X$, write $m=a(x)+1+b(y)-\rho$. A shortest $p$--$x$ path, the edge $xy$, and a shortest $y$--$q$ path form a $p$--$q$ walk of length $a(x)+1+b(y)$. Hence $m\ge0$. If $d$ is odd, then $y\in Y$ gives $b(y)\le a(y)-\theta-1\le\rho-\theta-1=d-1$, so $m\le d$. If $d$ is even, then $x\in X$ gives $a(x)\le b(x)+\theta\le d+\theta=\rho-2$, so $m\le d-1$. In both cases, $0\le m\le d$.

For each arc, express its length one as its increase in $\varphi$ plus a correction. A tree arc outside $Q$ with both ends in $X$ increases $a$ by one, and one with both ends in $Y$ decreases $b$ by one; their corrections are therefore zero. A noncritical crossing arc $x\to y$ has $\varphi(y)-\varphi(x)=d+1-m$, so its correction is $m-d\le0$. On $Q$, $\varphi(Q_i)=i$ for $i\le c$ and $\varphi(Q_i)=d+i$ for $i\ge c+1$. Thus each reversed edge of $Q$ within one part has correction $2$, whereas $Q_{c+1}\to Q_c$ has correction $d+2$.

For distinct $u,v$, choose a directed $u$--$v$ path in $\rbjt{H}$ with no repeated vertex. It uses at most $h-1$ reversed edges of $Q$ within the parts and uses the reversed critical edge at most once. All its other corrections are nonpositive. Summing the increases in $\varphi$ along this path leaves $\varphi(v)-\varphi(u)$, while the sum of its corrections is at most $2(h-1)+(d+2)=d+2h$. Its length is therefore at most the right-hand side of \eqref{eq:core-distance}. The path of length zero proves the same inequality when $u=v$.

Finally, if a component of $G-V(H)$ met both $X$ and $Y$, a path in that component would contain a crossing edge. This is impossible because every crossing edge belongs to $H$. Thus each outside component lies in one part. An attachment in the other part would also give a crossing edge with an endpoint outside $H$, which is again impossible. Each component and all its attachments therefore lie in the same part.
\end{proof}

Every vertex outside $H$ is at distance at most $r-1+\varepsilon$ from $H$ if it lies in $X$, and at most $r$ if it lies in $Y$. Indeed, for $v\in X\setminus V(H)$, a shortest $v$--$q$ path meets $H$, since $q\in Y$ and every crossing edge belongs to $H$. Let $s$ be its first vertex in $H$. The segment from $v$ to $s$ contains no crossing edge, since such an edge would have an endpoint outside $H$, so $s\in X$. Its length is $b(v)-b(s)$, because the remaining segment to $q$ is also shortest. As $b(v)\le d$ and $b(s)\ge r+1$, this length is at most $d-r-1=r-1+\varepsilon$. Similarly, for $v\in Y\setminus V(H)$, a shortest $v$--$p$ path meets $H$, since $p\in X$ and every crossing edge belongs to $H$. Its first vertex $s$ in $H$ lies in $Y$ by the same argument. The segment from $v$ to $s$ has length $a(v)-a(s)$, because the remaining segment to $p$ is shortest, and this is at most $\rho-t-\varepsilon=r$, since $a(v)\le\rho$ and $a(s)\ge t+\varepsilon$. These bounds explain why the auxiliary root eccentricities in the next section are at most $\lfloor d/2\rfloor$.

The reversed crossing edge accounts for the term \(d\) in \eqref{eq:core-distance}. For attachments nearest the boundary, the selected tree paths remove this term.

\begin{lemma}\label{lem:shallow-attachments}
Let $H$ have the fixed strong orientation obtained from the directed cycles through $Q_{c+1}\to Q_c$. The additional term \(d+2h\) in \eqref{eq:core-distance} can be replaced by \(2h+\varepsilon\) for an \(X\) source \(u\) with \(b(u)=r+1\), and by \(2h+1-\varepsilon\) for a \(Y\) target \(v\) with \(a(v)=c+1\). If both conditions hold and \((u,v)\ne(Q_c,Q_{c+1})\), their actual directed distance is at most \(2h+1\). For every noncritical crossing edge with the value \(m\), we also have \(\dist_{\rbjt{H}}(Q_c,Q_{c+1})\le2h-1+m\).
\end{lemma}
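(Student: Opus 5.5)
The plan is to reuse the potential bookkeeping from the proof of Lemma~\ref{lem:central-core}, but along \emph{explicit} directed walks rather than an arbitrary directed path. In a general path the term $d$ comes only from the reversed critical arc $Q_{c+1}\to Q_c$, whose correction is $d+2$. The idea is that a walk leaving an $X$ source near the boundary, or entering a $Y$ target near the boundary, can be routed so that this large correction is paid together with a noncritical crossing arc. That arc has correction $m-d$, so the combined correction is $m+2$ instead of $d+2$.

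\emph{Step 1: locate such vertices.} For $u\in X$ with $b(u)=r+1$, combine $a(u)+b(u)\ge\rho$ with $a(u)-b(u)\le\theta$ to get $c\le a(u)\le c+\varepsilon$, so $\varphi(u)\in\{c,c+\varepsilon\}$. Symmetrically, for $v\in Y$ with $a(v)=c+1$, get $b(v)\in\{\rho-c-1,\rho-c-1+\varepsilon'\}$ for the appropriate parity slack, so $\varphi(v)$ is pinned within one of $\varphi(Q_{c+1})=d+c+1$.

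\emph{Step 2: route out of $u$.} Take a constructed cycle through $u$, with crossing edge $xy$. If $u\in V(H)\cap X$ is not on $Q$, then $u$ lies on the $p$-tree segment from $Q_\lambda$ to $x$. Follow that cycle forward: $u\to x\to y\to Q_\nu$, then back along $Q$ to $Q_c$. Its total correction is $(m-d)+2(\nu-c-1)+(d+2)=m+2(\nu-c-1)$. I then need to bound $m$ using $b(u)=r+1$. Along the tree path $b$ changes by at most one per step, so $b(y)\le b(u)+(a(x)-a(u))+1$, hence
\[
m=a(x)+1+b(y)-\rho\le 2(a(x)-a(u))+\bigl(a(u)+b(u)+2-\rho\bigr).
\]
The term $2(a(x)-a(u))$ must be absorbed by the part of the walk from $u$ to $x$, which has zero correction. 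Equivalently, compare with the potential $\rho+d-b$ instead of $a$ on that segment, since $b$ is what is controlled at $u$.

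\emph{Step 3: continue to $v$.} From $Q_c$ I continue to $v$ exactly as in Lemma~\ref{lem:central-core}, now without reusing the critical arc. If $v\in Y$, the cycle through $v$ is entered at $Q_\lambda$, via $Q$-arcs with correction $2$ each, followed by a noncritical crossing arc. Closed subwalks are deleted at the end. Counting the reversed $Q$-arcs shows at most $h-1$ of them survive in total, giving $2h+\varepsilon$ in place of $d+2h$. The $Y$-target case is the mirror image. There I approach $v$ backwards along its cycle from $Q_\nu$, using $a(v)=c+1$ to bound $m$ via the $p$-distances, which yields $2h+1-\varepsilon$.

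\emph{Step 4: the remaining claims.} When both conditions hold and $(u,v)\ne(Q_c,Q_{c+1})$, the potential difference $\varphi(v)-\varphi(u)$ is about $d+1$. Instead I take the direct walk $u\to x\to y\to\cdots\to v$ through a single noncritical crossing edge and $Q$. This bounds the actual distance by $2h+1$, and the excluded pair is exactly where the only connection must go around. For the last claim, the cycle of a noncritical crossing edge $xy$ gives the walk $Q_c\to\cdots\to Q_\lambda\to x\to y\to Q_\nu\to\cdots\to Q_{c+1}$. Its length is $(c-\lambda)+(a(x)-\lambda)+1+(b(y)-(\rho-\nu))+(\nu-c-1)$, which simplifies to $m+2(\nu-\lambda)-\ldots\le 2h-1+m$ once $\nu-\lambda\le h$ is used.

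The main obstacle I expect is Step 2: getting the parity-exact constants $\varepsilon$ and $1-\varepsilon$. This needs a careful choice of which of the two potentials ($a$ or $\rho+d-b$) to charge each tree segment against, and I would first try charging the $X$-segment against $b$ and the $Y$-segment against $a$.
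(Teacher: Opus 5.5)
Your Step~1 and your final claim are fine. The walk $Q_c\to\cdots\to Q_\lambda\to x\to y\to Q_\nu\to\cdots\to Q_{c+1}$ has length exactly $m+2(\nu-\lambda)-1\le 2h-1+m$, as in the paper.

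\textbf{Gap in Steps~2--4.} These steps work with an arbitrary constructed cycle through $u$ (or $v$). Relative to the fixed potential $\varphi$, the segment $u\to x\to y\to Q_\nu$ has total correction exactly $m-d$ for the crossing edge $xy$ it uses. So saving $d-\varepsilon$ requires $m\le\varepsilon$ for that particular edge. For an arbitrary cycle through $u$ this fails: your own estimate only gives $m\le 2(a(x)-a(u))+a(u)-c+2$, which can be of order $d$. Your proposed fixes do not help. ``Absorbing'' the excess into the $u$--$x$ tree segment is impossible, since its correction is identically zero. Charging against a different potential does not change the fact that the segment has length $\varphi(Q_\nu)-\varphi(u)+m-d$.

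\textbf{The missing idea: take the crossing edge at $u$ itself.} The $q$-tree parent of $u$ has $b=r$, so it lies in $Y$, because every vertex of $X$ has $b\ge r+1$. Hence the first edge of $u$'s own $q$-tree path is a crossing edge, noncritical unless $u=Q_c$. Its cycle retains that whole tree path from $u$ to its first vertex $Q_i$ on $Q$. This segment has length $i-c$, and $m=a(u)-c\le\varepsilon$.

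Symmetrically, for $v\in Y$ with $a(v)=c+1$, the last edge of $v$'s own $p$-tree path is a crossing edge with $m=b(v)-r\le1-\varepsilon$. Its cycle retains a segment $Q_j\to\cdots\to v$ of length $c+1-j$, with $j\le c$.

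Your Step~2 sets aside the cases $u=Q_c$ and $v=Q_{c+1}$. They follow because a simple directed path starting at the head (or ending at the tail) of the critical arc cannot use that arc.

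\textbf{No detour to $Q_c$ is needed.} Prepend $u\to Q_i$ to \eqref{eq:core-distance} applied from $Q_i$, where $\varphi(Q_i)=d+i$. This gives $(i-c)+\varphi(v)-(d+i)+d+2h\le\varphi(v)-\varphi(u)+2h+\varepsilon$ directly. Appending $Q_j\to v$ gives the target bound in the same way.

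Two problems with your detour. First, its first-part correction is $m+2(\nu-c)$, not $m+2(\nu-c-1)$. Second, making the later count of ``surviving'' reversed arcs rigorous requires checking that deleting closed subwalks never keeps the critical arc while discarding the good crossing arc.

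\textbf{Step~4 needs an explicit walk.} In general it passes through two noncritical crossing edges, not one: $u\to Q_i$, then along $Q$ backwards from $Q_i$ to $Q_j$, then $Q_j\to v$. Its total length is $(i-c)+(i-j)+(c+1-j)=2(i-j)+1\le2h+1$. Set $i=c$ when $u=Q_c$ and $j=c+1$ when $v=Q_{c+1}$. Then the required inequality $i\ge j$ fails exactly for the excluded pair.
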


\begin{proof}
For such an \(X\) source, the partition inequalities give \(c\le a(u)\le c+\varepsilon\). Its selected \(q\)-tree path immediately crosses to \(Y\), where it stays. If \(u\ne Q_c\), that first edge is noncritical, so its defining cycle retains the whole segment from \(u\) to its first vertex \(Q_i\) on $Q$, with \(i\ge c+1\). The segment is unchanged and has length \(i-c\). Prepending it to \eqref{eq:core-distance} from \(Q_i\), where \(\varphi(Q_i)=d+i\), gives \((i-c)+\varphi(v)-(d+i)+d+2h\le\varphi(v)-\varphi(u)+2h+\varepsilon\). If \(u=Q_c\), a simple directed path starting there avoids the reversed critical edge, so the correction sum in Lemma~\ref{lem:central-core} is at most \(2(h-1)\).

For a \(Y\) target \(v\) with \(a(v)=c+1\), put \(\chi=b(v)-r\), where \(0\le\chi\le1-\varepsilon\). Every earlier vertex of its selected \(p\)-tree path has \(a\le c\), hence lies in \(X\). If \(v\ne Q_{c+1}\), the last edge is noncritical, and its defining cycle retains the segment from its last vertex \(Q_j\) on $Q$ to \(v\), with \(j\le c\). Its length is \(c+1-j=\varphi(v)-\varphi(Q_j)-d+\chi\). Appending this segment to \eqref{eq:core-distance} proves the target bound. For \(v=Q_{c+1}\), a simple path ending there avoids the reversed critical edge.

When both conditions hold, concatenate these two segments with the reversed \(Q_i\)--\(Q_j\) segment. Take \(i=c\) when \(u=Q_c\), and \(j=c+1\) when \(v=Q_{c+1}\). Except for the excluded pair, \(i\ge j\), and the total length is \((i-c)+(i-j)+(c+1-j)=2(i-j)+1\le2h+1\). Finally, let the selected paths of a noncritical edge \(xy\) leave and meet $Q$ at \(Q_\lambda,Q_\nu\). The segment through this edge, whose internal vertices lie outside $Q$, has length \(\nu-\lambda+m\). From \(Q_c\), follow $Q$ backwards to \(Q_\lambda\), use that segment, and follow $Q$ backwards from \(Q_\nu\) to \(Q_{c+1}\). The resulting directed walk has length \(2(\nu-\lambda)+m-1\le2h+m-1\). Deleting closed subwalks completes the proof.
\end{proof}

Lemma~\ref{lem:shallow-attachments} leaves one exceptional pair, namely \(Q_c,Q_{c+1}\). The next lemma chooses $Q$ so that a short route also joins this pair. Its only alternative is handled immediately afterward. The covering radius of an edge \(uv\) is \(\max_x\min\{\dist(u,x),\dist(v,x)\}\).

\begin{lemma}\label{lem:axis-choice}
If \(t\ge1\), the centre \(p\), its farthest vertex \(q\), and the shortest path \(Q\) can be chosen so that a noncritical crossing edge has a value of \(m\) at most \(r+\varepsilon\). Consequently, \(\dist_{\rbjt{H}}(Q_c,Q_{c+1})\le2h+r-1+\varepsilon\).
If \(d\) is odd and \(t=0\), either the same choice with a value of \(m\) at most \(r+1\) is possible, or some edge has covering radius at most \(r\).
\end{lemma}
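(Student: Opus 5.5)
The second sentence is immediate once the first is proved. If a noncritical crossing edge has $m\le r+\varepsilon$, the last assertion of Lemma~\ref{lem:shallow-attachments} gives $\dist_{\rbjt{H}}(Q_c,Q_{c+1})\le 2h-1+m\le 2h+r-1+\varepsilon$. So the work is to find a good crossing edge other than $Q_cQ_{c+1}$. I would read $m=a(x)+1+b(y)-\rho$ as the excess over $\rho$ of the shortest $p$--$q$ walk routed through $xy$. Then $m$ is small exactly when $xy$ lies on a nearly geodesic $p$--$q$ route. I would also use that the partition $X,Y$, and hence the set of crossing edges, depends only on $p$ and $q$, not on $Q$.

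The first step is the easy case. If some $p$--$q$ geodesic $Q'$ avoids the edge $Q_cQ_{c+1}$, then, since $X$ contains exactly the vertices $Q'_i$ with $i\le c$, its unique crossing edge is $Q'_cQ'_{c+1}$, which is noncritical and has $m=0$. So we may assume every $p$--$q$ geodesic, for every admissible choice, uses the critical edge $e=Q_cQ_{c+1}$. Since $e$ is not a bridge, $G-e$ is connected. Take a shortest $p$--$q$ path $P$ in $G-e$. It contains a crossing edge $xy$ with $x\in X$, $y\in Y$, and this edge is noncritical. Because the segments of $P$ before and after $xy$ are paths, $m\le |P|-\rho$. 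Hence it suffices to show $\dist_{G-e}(p,q)\le\rho+r+\varepsilon$, or else to modify the choice of $(p,q,Q)$.

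For this bound I would argue by an extremal choice. Among all centres $p$, farthest vertices $q$ and geodesics $Q$, take one minimising the least value of $m$ over noncritical crossing edges, and suppose this minimum exceeds $r+\varepsilon$. Every vertex $w$ then has $\dist(w,Q_c)\le d$ and $\dist(w,Q_{c+1})\le d$. Routing $p\to w\to q$ through a vertex $w$ whose shortest paths to $p$ and to $q$ avoid $e$ gives a walk of length $a(w)+b(w)$ in $G-e$, so every such $w$ has $a(w)+b(w)>\rho+r+\varepsilon$. I would then use the hypothesis $t\ge1$, that is $\rho>r+\varepsilon$, to contradict the minimality of the radius. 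The intended contradiction is that all vertices are then within distance less than $\rho$ of $Q_c$ or of $Q_{c+1}$, so that a vertex near $e$, such as $Q_c$, $Q_{c+1}$ or a neighbour, would have eccentricity below $\rho$. The alternative outcome is a different farthest vertex $q'$ or centre $p'$ with a shorter detour, contradicting the extremal choice. The main obstacle is this step: controlling how vertices on the $X$-side and $Y$-side of $e$ reach the other side in $G-e$, and converting a long detour into either a smaller eccentricity or a better choice. I would first try to show that any shortest $w$--$Q$ connection avoiding $e$ produces a crossing edge whose $m$ is at most the halfway bound, by splitting the detour at its midpoint, using $d\le 2\rho$.

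For $d$ odd and $t=0$ there is no slack, since $\rho=r+1$. Running the same argument with the weaker target $m\le r+1$, the failure case should give exactly that every vertex lies within distance $r$ of $Q_c$ or of $Q_{c+1}$. That is the statement that the edge $Q_cQ_{c+1}$ has covering radius at most $r$, which is the stated alternative.
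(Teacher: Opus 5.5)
Your preliminary reductions are sound. The consequence does follow from the last assertion of Lemma~\ref{lem:shallow-attachments}, and the partition depends only on $p,q$. A second $p$--$q$ geodesic avoiding $e=Q_cQ_{c+1}$ gives $m=0$. Any $X$-to-$Y$ edge of a shortest $p$--$q$ path in $G-e$ has $m\le\dist_{G-e}(p,q)-\rho$.

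However, the existence of a suitable choice is the entire content of the lemma, and it is not proved. You call it the main obstacle and offer only an intended contradiction, which does not work. If every vertex were within distance $\rho-1$ of $Q_c$ or $Q_{c+1}$, then $\ecc(Q_c)\le\rho$. That is fully compatible with $\rad(G)=\rho$, so nothing forces a vertex of eccentricity below $\rho$. You also never show how a failure for one triple produces the competing centre or farthest vertex that your extremal choice needs, so the minimisation has nothing to act on. The claim that for odd $d$ and $t=0$ the failure case is exactly the covering alternative is likewise only asserted.

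The missing idea is to compare two farthest vertices through the midpoint of a path between them. Put $u=Q_1$. There are two cases.
\begin{itemize}
\item Some farthest vertex $z$ of $p$ has $\dist(u,z)\ge\rho$. Then a $p$--$z$ geodesic $Z$ meets $Q$ only at $p$, and the endpoint-distance sums at $Q_c$ and $Z_c$ are at least $2r+2>d$.
\item Every farthest vertex of $p$ is at distance $\rho-1$ from $u$. Then $u$ is also a centre; take a farthest vertex $z$ of $u$ and a $u$--$z$ geodesic $Z$ starting $u,p$. When $d$ is odd and $t=0$, every vertex is within $r$ of $p$ or $u$, which is exactly the covering alternative for the edge $pu$. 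When $t\ge1$, the endpoint-distance sums at $Q_c$ and $Z_c$ again exceed $d$.
\end{itemize}
In either remaining case, a shortest $q$--$z$ path avoids $Q_c$ and $Z_c$, and its midpoint $w$ is within $r+\varepsilon$ of both $q$ and $z$.

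A geodesic from $p$ to $w$ cannot use both $Q_cQ_{c+1}$ and $Z_cZ_{c+1}$, since both join the distance layers $c$ and $c+1$. In the second case, compare geodesics to $w$ from $p$ and from $u$; a prefix replacement shows they cannot both use their critical edges. Keep the triple whose critical edge is avoided.
\begin{itemize}
\item If $w\in Y$, the first crossing edge on that geodesic is noncritical. Since $a+b$ is nondecreasing along a geodesic from $p$ and $a(w)\le\rho$, it has $m\le a(w)+b(w)-\rho\le b(w)\le r+\varepsilon$.
\item If $w\in X$, the first edge of a shortest $w$--$q$ path has $m\le1$, and it is noncritical because $w\ne Q_c$.
\end{itemize}
Without some such construction of a vertex close to two farthest vertices, the bound $\dist_{G-e}(p,q)\le\rho+r+\varepsilon$ for a suitable choice remains unsupported.
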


\begin{proof}
Start with any \(p,q,Q\) and put \(u=Q_1\). We find, allowing this triple to change, a vertex \(w\ne Q_c\) with \(b(w)\le r+\varepsilon\) and a \(p\)--\(w\) geodesic avoiding the critical edge.

First suppose a farthest vertex \(z\) of \(p\) satisfies \(\dist(u,z)\ge\rho\). A \(p\)--\(z\) geodesic \(Z\) meets \(Q\) only at \(p\): an intersection \(Z_i=Q_i\), \(i\ge1\), would give a \(u\)--\(z\) route of length \(\rho-1\). A shortest \(q\)--\(z\) path avoids \(Q_c,Z_c\), since at either vertex the sum of the distances to its endpoints is at least \(2(\rho-c)=2r+2>d\). Its midpoint \(w\) is within \(r+\varepsilon\) of both endpoints. A \(p\)--\(w\) geodesic cannot use both critical edges, since both join distance layers \(c,c+1\). Keep the farthest vertex and the corresponding shortest path whose critical edge it avoids.

Otherwise every farthest vertex of \(p\) is at distance \(\rho-1\) from \(u\). All remaining vertices are within \(\rho\) of \(u\), so \(u\) is also a centre. Choose a farthest vertex \(z\) of \(u\). Then \(\dist(p,z)=\rho-1\), and a \(u\)--\(z\) geodesic \(Z\) can start \(u,p\). If \(\varepsilon=1,t=0\), every vertex is within \(r\) of \(p\) or \(u\), giving the covering alternative. Hence assume \(t\ge1\). A shortest \(q\)--\(z\) path again avoids \(Q_c,Z_c\): for \(c\ge1\), the endpoint distance sum at either vertex is at least \(2r+3\); for \(c=0\), it is \(2\rho-1>d\). Take its midpoint \(w\), and geodesics to \(w\) from \(p\) and \(u\). If the first uses \(Q_cQ_{c+1}\), replacing its prefix by the \(u\)--\(Q_{c+1}\) segment gives \(\dist(u,w)\le\dist(p,w)-1\). If the second uses \(Z_cZ_{c+1}\), the reverse inequality with an additional \(-1\) follows similarly. They cannot both hold. Keep the successful centre, farthest vertex, and shortest path.

Rename the resulting objects \(p,q,Q,w\). If \(w\in Y\), the first crossing edge \(xy\) on the chosen \(p\)--\(w\) geodesic is noncritical and has \(m=a(y)+b(y)-\rho\le a(w)+b(w)-\rho\le b(w)\le r+\varepsilon\), because \(a+b\) is nondecreasing along that geodesic and \(p\) is a centre. If \(w\in X\), then \(\varepsilon=1\), \(b(w)=r+1\), and \(c\le a(w)\le c+1\). The first edge of a shortest \(w\)--\(q\) path crosses to \(Y\), has \(m=a(w)-c\le1\), and is noncritical since \(w\ne Q_c\). Lemma~\ref{lem:shallow-attachments} gives the claimed distance bound.
\end{proof}

The alternative concerning the covering radius of an edge in Lemma~\ref{lem:axis-choice} needs no further path construction. The following bound settles it by orienting a short cycle and using the quoted radius theorem.

\begin{lemma}\label{lem:edge-cover}
If an edge has covering radius at most \(R\), then \(G\) has a strong orientation of diameter at most \(2R^2+4R+1\).
\end{lemma}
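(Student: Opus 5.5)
The plan is to contract a short cycle through the given edge to a single vertex, orient the contracted graph with the specified-root form of the radius theorem of Chv\'atal and Thomassen (Theorem~\ref{thm:radius}), and then orient the cycle cyclically. The count will be $2(R^2+R)$ for the two rooted legs, plus at most $2R+1$ for travelling around the cycle, which gives exactly $2R^2+4R+1$.

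\textbf{Step 1: a short cycle through the edge.} Let $uv$ be the edge with covering radius at most $R$.
\begin{itemize}
\item Partition $V(G)$ into $U=\{x:\dist(u,x)\le\dist(v,x)\}$ and $V'=V(G)\setminus U$. Every vertex of $U$ is within $R$ of $u$, and every vertex of $V'$ is within $R$ of $v$.
\item Any shortest $u$--$x$ path with $x\in U$ stays in $U$. Indeed, for $z$ on it, $\dist(u,z)=\dist(u,x)-\dist(z,x)\le\dist(v,x)-\dist(z,x)\le\dist(v,z)$.
\item The same argument, with the strict inequality, shows that any shortest $v$--$y$ path with $y\in V'$ stays in $V'$.
\item Since $uv$ is not a bridge, some edge $xy\ne uv$ joins $U$ to $V'$.
\end{itemize}
The $u$--$x$ geodesic, the edge $xy$, the $y$--$v$ geodesic and the edge $vu$ then form a cycle $C$. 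It is a cycle because the two geodesics lie in the disjoint sets $U$ and $V'$. Its length satisfies $|C|\le R+1+R+1=2R+2$.

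\textbf{Step 2: contract and apply the radius theorem.} Contract $C$ to a single vertex $w$ and delete any loops that arise from chords of $C$; parallel edges are allowed. The contracted graph $G'$ is connected. It is also bridgeless, since a bridge of $G'$ would be a bridge of $G$: contraction only identifies vertices, and chords lie on cycles. Every vertex is within $R$ of $u$ or of $v$, so $\ecc_{G'}(w)\le R$. The specified-root radius theorem therefore gives a strong orientation of $G'$ in which every vertex reaches $w$, and is reached from $w$, by directed paths of length at most $R^2+R$. Lift this orientation to the corresponding edges of $G$. Orient $C$ as a directed cycle, and orient the chords of $C$ arbitrarily.

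\textbf{Step 3: distance estimate.} Take any $x,y\in V(G)$.
\begin{itemize}
\item A directed $x$--$w$ path in $G'$ of length at most $R^2+R$ lifts to a directed path in $G$ from $x$ to some vertex $c_1$ of $C$. The only vertex it visits in $C$ is its endpoint, because $w$ appears only at the end. (If $x\in C$, take $c_1=x$.)
\item Similarly, a directed $w$--$y$ path lifts to a directed path from some $c_2\in C$ to $y$ of length at most $R^2+R$.
\item Going around the directed cycle from $c_1$ to $c_2$ costs at most $|C|-1\le2R+1$.
\end{itemize}
The concatenation, shortened by deleting closed subwalks, has length at most $2(R^2+R)+2R+1=2R^2+4R+1$. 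This ordered-pair bound also shows the orientation is strong.

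The main point needing care is Step 1, namely that the two geodesics are disjoint, so that $C$ is genuinely a cycle of length at most $2R+2$. A secondary check is that the radius theorem, as quoted, applies to the multigraph $G'$. If it is stated only for simple graphs, I would instead keep one edge from each parallel class and orient the others to match, noting that removing parallel copies cannot create a bridge unless the class was a 2-edge cut. In that case I would orient the parallel pair oppositely and check that this affects no rooted distances. I expect the paper's statement to allow parallel edges directly, consistent with its conventions.
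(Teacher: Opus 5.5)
Your proof is correct and follows the same route as the paper: a cycle of length at most $2R+2$ through the edge is contracted to a root, Theorem~\ref{thm:radius} bounds both root distances by $R(R+1)$, and at most $2R+1$ arcs around the cycle join the two endpoints. The only difference is how the cycle is found. The paper takes a shortest $u$--$v$ path in $G-uv$ and shows its $i$th vertex is at distance $\min\{i,m-i\}$ from $\{u,v\}$, whereas your nearest-endpoint partition with a crossing edge $xy\ne uv$ builds the cycle directly; both arguments are valid, and since Theorem~\ref{thm:radius} already allows parallel edges, your fallback for simple graphs is unnecessary.
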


\begin{proof}
For such an edge \(uv\), take a shortest \(u\)--\(v\) path in \(G-uv\), of length \(m\). Its \(i\)th vertex has distance \(\min\{i,m-i\}\) from \(\{u,v\}\): a shorter route up to the first endpoint it meets avoids \(uv\), and would shorten a prefix or suffix of the chosen path. Thus \(\lfloor m/2\rfloor\le R\), and this path with \(uv\) forms a cycle of length at most \(2R+2\). Orient the cycle cyclically and contract it to a root, deleting loops. The quotient is connected and bridgeless, and its root eccentricity is at most \(R\). Theorem~\ref{thm:radius} bounds both directed root distances by \(R(R+1)\). Restore their actual endpoints on the cycle and join them using at most \(2R+1\) arcs. Every ordered pair is joined by a directed walk of length at most \(2R(R+1)+2R+1\), as required.
\end{proof}

\section{Rooted orientations with recorded attachments}\label{sec:rooted-orientations}
This section supplies directed paths from the central subgraph to every outside vertex and back. The important point is to record where each path meets the central subgraph, since Section~\ref{sec:joining-paths} will use the directed distance between these actual attachments. Lemma~\ref{lem:auxiliary-attachments} first builds an auxiliary rooted graph whose distances are exact. Lemma~\ref{lem:rooted-orientations} then gives the basic recursive orientations. The later lemmas improve their path lengths using the radius and diameter of the original graph.

Fix a central subgraph $H$ containing every crossing edge, together with a strong orientation of $H$. The construction below applies both to the initial subgraph and to the rebuilt subgraph in Section~\ref{sec:rebuilding}. In either case, $q$ denotes the chosen farthest vertex and $b=\dist_G(q,\cdot)$ is the true distance in $G$, even if a retained tree is not geodesic. An added path from an auxiliary root to an attachment is called an arm. The paired arms can be oriented in opposite directions. Retained nonroot vertices are called ordinary; the internal vertices of added arms are artificial, and every edge of an added arm is called artificial. A real vertex or edge belongs to the original graph $G$. The designation ordinary is relative to the current auxiliary graph, whereas real always refers to $G$. A length-one arm has an artificial edge but no artificial internal vertex. Depth means distance from the root in the current auxiliary graph.

\begin{lemma}\label{lem:auxiliary-attachments}
Let $V$ be a component of $G-V(H)$, and let $S$ be its neighbours in $H$. Retain all edges within $V$ and between $V$ and $S$. Assign $w(s)=b(s)-r-1$ on $X$ and $w(s)=a(s)-t-\varepsilon$ on $Y$. Identify the zero-weight attachments with a new root $o$. At each positive attachment $s$, add two $o$--$s$ paths of length $w(s)$ with disjoint private interiors. Record the original endpoint of every attachment edge, and delete loops created by identification.

The resulting multigraph is connected and bridgeless. The depth of an ordinary vertex $v$ is exactly $b(v)-r-1$ on $X$, and $a(v)-t-\varepsilon$ on $Y$. The root eccentricities are at most $r-1+\varepsilon$ and $r$, respectively, and every added arm is geodesic. Strong orientations of these auxiliary graphs restore to a strong orientation of $G$ extending the fixed orientation of $H$.
\end{lemma}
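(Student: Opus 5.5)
Fix a component $V$ with attachment set $S$. By Lemma~\ref{lem:central-core}, $V\cup S$ lies in one part; I treat $X$, the $Y$ case being symmetric with $a$ and $t+\varepsilon$ in place of $b$ and $r+1$. The claims are checked in this order: weights are nonnegative, depths are exact, the eccentricity bounds hold, the graph is connected and bridgeless, and orientations restore.

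\emph{Weights and depths.} For $s\in S\subseteq X$ we have $b(s)\ge r+1$, so $w(s)\ge 0$; on $Y$, $a(s)\ge c+1=t+\varepsilon$. Let $v\in V$ and write $\delta(v)$ for its depth. For the upper bound $\delta(v)\le b(v)-r-1$, take a shortest $v$--$q$ path. As in the remark after Lemma~\ref{lem:central-core}, its first vertex $s$ in $H$ lies in $S$, and the segment from $v$ to $s$ has length $b(v)-b(s)$ with all interior vertices in $V$. An arm of length $w(s)$, or identification with $o$ if $w(s)=0$, adds $b(s)-r-1$.

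For the lower bound, note that any edge of the auxiliary graph changes the potential $\psi=b-r-1$ by at most one. This holds for real edges by the triangle inequality, for arm edges if we assign artificial vertices their arm position, and $\psi(o)=0$ is consistent with every attachment. Since $\psi(o)=0$, every vertex has depth at least $\psi$. The same potential argument shows each arm is geodesic, because it changes $\psi$ by exactly one per edge. Eccentricity then follows from $b(v)\le d$, which gives $\psi\le d-r-1=r-1+\varepsilon$; on $Y$, $a\le\rho$ gives $\psi\le\rho-t-\varepsilon=r$.

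\emph{Connected and bridgeless.} Connectivity is immediate since $V$ is connected and touches $S$. For bridgelessness, the paired arms at positive attachments form cycles through $o$, so arm edges are not bridges. A real edge $e$ that is a bridge of the auxiliary graph would be a bridge of $G$. Its removal separates the auxiliary graph into a side without $o$, consisting of real vertices of $V$ (artificial ones stay attached to $o$). In $G-e$, every edge leaving that side goes to $H$ through an attachment edge. But each such attachment edge corresponds to an edge to $o$ or to a positive attachment $s$, and $s$ is joined to $o$ by arms, so those edges would lie in the $o$-side. Hence there are none, and $e$ is a bridge in $G$, a contradiction. Loops at $o$ are deleted, and parallel edges created by identification are harmless.

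\emph{Restoration.} Given a strong orientation of the auxiliary graph, orient each real edge of $V$ and each attachment edge accordingly, using its recorded original endpoint, and discard the arms. To see strong connectivity, map any directed walk in the auxiliary graph to $G$. Each passage through $o$ or an arm enters at some attachment $s$ and leaves at some attachment $s'$, both in $H$, and these are joined by a directed path in the strong orientation of $H$. So every vertex of $V$ reaches $H$ and is reached from $H$. Doing this for all components gives a strong orientation of $G$ extending $H$. The step I expect to be most delicate is the bridgeless argument, specifically tracking recorded endpoints through the identification.
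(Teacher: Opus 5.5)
Your proof is correct and follows essentially the same route as the paper: the interpolated potential gives the lower bound on depth, a shortest path to $q$ (or $p$) gives the upper bound, the arm pairs are cycles and a bridge of the auxiliary graph lifts to a bridge of $G$, and restoration cuts walks at the attachments. The only points left implicit are that arm interiors are no deeper than their endpoints, which is needed for the eccentricity bound, and that edges of $G$ joining two vertices of $H$ but not belonging to $H$ may be oriented arbitrarily.
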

\begin{proof}
The partition inequalities make the weights nonnegative, and $V\cup S$ lies in one part. An $X$ vertex with $b=r+1$ is incident with a crossing edge on a shortest path to $q$; a $Y$ vertex with $a=t+\varepsilon$ has the analogous property on a shortest path to $p$. Such vertices already belong to $H$. Thus the assigned depths are positive outside $H$.

Assign each ordinary vertex its asserted depth, assign $o$ value zero, and interpolate consecutively along every arm. Values differ by at most one across each edge, giving a lower bound on root distance. Conversely, follow a shortest $v$--$q$ path on $X$, or $v$--$p$ path on $Y$, to its first attachment $s$. The preceding vertices remain in $V$, since all crossing edges lie in $H$. Prepending the arm at $s$ gives the asserted depth. This also proves that the arms are geodesic. The eccentricity bounds follow from $b\le d$ and $a\le\rho$; arm interiors are no deeper than their endpoints.

Connectivity is immediate. Each artificial edge lies on the cycle formed by its arm pair. If a retained edge were a bridge, the side away from $o$ would contain no attachment: every positive attachment has two arms to $o$, and every zero attachment is identified with $o$. That edge would then be a bridge of $G$.

A simple directed root path uses an arm only as a complete initial or final segment, since arm interiors have degree two. Delete that arm, or restore the attachment recorded on the ordinary root edge. The resulting real path joins the vertex to its own attachment in $H$, and its length decreases by exactly the deleted arm length. Every vertex can therefore reach and be reached from $H$. Different components use disjoint original edges, so their orientations combine. Any still unoriented edge with both endpoints in $H$ may be oriented arbitrarily.
\end{proof}

\begin{theorem}\label{thm:radius}
Let $J$ be a finite connected bridgeless graph, possibly with parallel edges, let $z$ be a specified vertex, and let $R$ be a nonnegative integer. If $\dist_J(z,x)\le R$ for every vertex $x$, then $J$ has a strong orientation $\rbjt{J}$ in which both $\dist_{\rbjt{J}}(z,x)$ and $\dist_{\rbjt{J}}(x,z)$ are at most $R(R+1)$ for every vertex $x$.
\end{theorem}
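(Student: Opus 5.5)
We prove the statement by induction on $R$, following the ear/layer argument of Chv\'atal and Thomassen. For $R=0$ the graph is the single vertex $z$ and there is nothing to prove. Assume $R\ge1$ and the statement holds for $R-1$.

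\textbf{Step 1: orient short cycles through the neighbours of $z$.} Let $N$ be the set of vertices at distance exactly $1$ from $z$. For each $x\in N$ that does not yet lie in an already oriented subgraph, consider an edge $zx$. Since it is not a bridge, there is a $z$--$x$ path avoiding it. We want such a path of length at most $2R$ that can be oriented as a cycle through $z$.

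Specifically, remove $zx$. Take a shortest path from $x$ to the set of vertices already absorbed, initially $\{z\}$, in $J-zx$. Close it with $zx$ into a directed cycle or ear. By the standard Chv\'atal--Thomassen argument, using the bridgelessness of $zx$ and the depth bound $R$, this ear has length at most $2R+1$. Each vertex on it is then joined to and from $z$ within distance $2R+1$ inside the oriented part.

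Repeat until every vertex of $N$ is covered. Parallel edges cause no problem: a parallel pair at $z$ already forms a $2$-cycle. Let $K$ be the union of the oriented ears. $K$ is strongly connected and contains $z\cup N$, and every vertex of $K$ reaches $z$ and is reached from $z$ within $2R+1$. A sharper count puts the ear vertices at distance at most $2R$ in both directions, which is what is needed to obtain $R(R+1)=2R+(R-1)R$.

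\textbf{Step 2: contract and apply induction.} Contract $K$ to a single vertex $z'$ and delete the loops. The quotient $J'$ is connected and bridgeless, since contraction preserves these properties. Every vertex of $J'$ is within $R-1$ of $z'$, because a shortest path from $z$ passes through $N\subseteq K$. By the induction hypothesis, $J'$ has a strong orientation with root distances at most $(R-1)R$ in both directions.

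Lift this orientation back, keeping the orientation of $K$. A directed path from $z'$ to $x$ in $J'$ starts from some vertex $y\in K$. Going from $z$ to $y$ inside $K$ costs at most $2R$, so $\dist(z,x)\le 2R+(R-1)R=R(R+1)$. The reverse direction is symmetric. The lifted orientation is strong because every vertex reaches $z$ and is reached from $z$.

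\textbf{Main obstacle.} The hard part is Step 1: constructing ears so that every vertex of $K$ lies within $2R$ of $z$ in both directions, rather than merely within the ear length. The plan is to take each ear as a shortest path in $J-zx$ from $x$ back to the current $K$. Shortest-path minimality then implies that the ear's internal vertices have depth increasing and then decreasing. A vertex at position $i$ on an ear of length $\ell\le 2R+1$ is reached in at most $i$ steps one way and $\ell-i$ steps the other, with the detour to $z$ through the previously built part. Controlling that detour is the crux. We would handle it by attaching each ear to $K$ only at vertices of small directed distance, or, following Dankelmann--Morgan--Rivett-Carnac, by directly citing the specified-root form of the Chv\'atal--Thomassen radius theorem as the paper does.
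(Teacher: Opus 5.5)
\textbf{There is a genuine gap in Step~1.} Your Step~2 is the standard reduction and is fine. Contracting a connected $K\supseteq\{z\}\cup N(z)$ keeps the graph bridgeless and lowers the root eccentricity to at most $R-1$, and $2R+(R-1)R=R(R+1)$. The deleted loops just get arbitrary directions when you lift.

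Step~1, however, carries the whole content of the theorem, and you only assert it. Under your rule, an ear is a shortest $x$--$K$ path in $J-zx$, attached wherever it first meets $K$, and with this rule the bound $2R$ is not controlled. Suppose the ear has $\ell$ edges from $x$ to its attachment $y$. When $x$ is added, one of its two root directions must pass through $y$, so that direction has length at least $\ell+\dist_J(z,y)$. Minimality of $\ell$ says nothing about this sum.

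Here is a concrete case with $R=2$. Let $z$ be adjacent to $x,b,p,q$, and add the edges $xa,ab,by,py,yq$. This graph is bridgeless with $R=2$.
\begin{itemize}
\item Process $p$ and then $x$. The first ear may be $z\to p\to y\to q\to z$.
\item The second ear may be the shortest path $x,a,b,y$, which is tied with $x,a,b,z$.
\item Either orientation of this ear puts $x$ at directed distance $5>2R$ from or to $z$ in $\rbjt{K}$.
\item By then $N(z)$ is covered, so this is the final $K$.
\end{itemize}
Your fallback of citing the specified-root radius theorem is exactly what the paper does. It gives no proof and refers to Chv\'atal--Thomassen and Dankelmann--Morgan--Rivett-Carnac. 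But then your Steps~1--2 play no role.

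\textbf{What is missing} is a rule for choosing the cycles so that their orientations agree on shared edges and every vertex keeps a geodesic direction to or from $z$.

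The standard construction is also the subgraph $A$ in the proof of Lemma~\ref{lem:rooted-orientations}:
\begin{itemize}
\item Take a BFS tree from $z$. Call two branches (children of $z$) adjacent if an edge of $J$ joins them.
\item Colour a maximal independent set of this branch graph red and the rest blue.
\item For every red--blue edge $uv$, take $u\to v$, the red tree path directed away from $z$, and the blue tree path directed towards $z$.
\end{itemize}
These directions agree on overlaps. Every vertex lies on a directed cycle through $z$ of length at most $2R+1$, so it is within $2R$ in both directions. Every child of $z$ is covered, because each label has a neighbour of the other colour. Treat components of $J-z$ separately, and handle a component meeting only one neighbour of $z$ with two oppositely oriented parallel edges.

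Alternatively, your ear scheme can be repaired:
\begin{itemize}
\item Take the ear along the tree cycle $z,x,\dots,u,w,\dots,z$ of length at most $2R+1$, formed from an edge $uw$ leaving the branch of $x$.
\item Stop at its first vertex $y\in K$. This gives $\ell+\dist_J(z,y)\le2R$.
\item Maintain the invariant that every $y\in V(K)$ has $\min\{\dist_{\rbjt{K}}(z,y),\dist_{\rbjt{K}}(y,z)\}=\dist_J(z,y)$.
\item Orient the ear into $y$ if $\dist_{\rbjt{K}}(y,z)=\dist_J(z,y)$, and out of $y$ otherwise.
\end{itemize}
The new vertices then satisfy both the $2R$ bounds and the invariant.
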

This form follows from Chv\'atal and Thomassen~\cite{ChvatalThomassen1978} and is stated explicitly in~\cite[Theorem~3]{DankelmannEtAl2024}. In~\cite{DankelmannEtAl2024}, it bounds oriented diameter using a connected vertex set within a prescribed distance of every vertex. Here it settles the small-diameter and minimum-radius cases. We next retain the attachment information needed when root paths are restored.

Henceforth $J$ is loopless, connected and bridgeless, rooted at $o$, with possible prescribed arm pairs. A pair consists of two $o$--$s$ paths of length $w(s)=\dist_J(o,s)\ge1$, whose private internal vertices have degree two; each endpoint has at most one pair. The arm endpoints and all nonroot vertices outside the arm interiors are ordinary, as are all edges outside the added arms. Every ordinary root edge records its attachment before identification, including when different edges become parallel. A child root may represent several attachments rather than a single vertex of $G$.

An $o$--$x$ directed path is an input, and an $x$--$o$ path a return. For ordinary $x$, a path is positive if its root segment is a complete arm, and zero if that segment is an ordinary root edge. The word zero describes the type of root segment, not the length of the path. Every path is restored at its own recorded attachment.

When a positive path is restored to the original graph, its length decreases by the length of the arm actually used. The depth bound and the bound for that arm must therefore hold on the same chosen path. The even-diameter argument also needs an exact relation between a selected child path and its restored parent path, which will be enforced by selecting the child minimum first.

We need three kinds of length bound. The bound $C_R(z)$ controls an ordinary path at depth $z$. The smaller bound $I_R(z)$ applies to the classes specified in Lemma~\ref{lem:rooted-orientations}. For a positive return, $W_R(w)$ also records the length $w$ of the arm actually used. Their largest depth bounds are $A_R=C_R(R)$ and $F_R=I_R(R)$. Define
\[
\begin{aligned}
C_R(z)&=z(2R-z+1),& I_R(z)&=(z-1)(2R-z)+1,\\
W_R(w)&=R+(R-w)^2,& A_R&=R(R+1),\qquad F_R=R^2-R+1,
\end{aligned}
\]
for $1\le z,w\le R$, with $C_R(0)=0$. The parameter $R$ may exceed the actual root eccentricity. Whenever the indices are defined, the differences from the maximum-depth bounds are $C_R(z)-A_R=I_R(z)-F_R=-(R-z)(R-z+1)$. We will use the restoration identities $C_{R-1}(z-1)+2R=C_R(z)$, $C_{R-1}(z-1)+1=I_R(z)$, and $I_{R-1}(z-1)+2R-2=I_R(z)$. The arm bound also satisfies $W_R(1)=F_R$.
The depth bounds increase with $R$ and integer $z$; their successive depth increments are $2(R-z)$. The arm bound increases with $R$ and decreases with $w$. The next lemma introduces two rooted orientation constructions, denoted by $\mathcal C$ and $\mathcal H$; the ordinary letter $H$ continues to denote the central subgraph of $G$.

\begin{lemma}\label{lem:rooted-orientations}
There are rooted orientation constructions $\mathcal C$ and $\mathcal H$ with the following properties whenever $\ecc_J(o)\le R$. Each gives a strong orientation with a chosen input and return of length at most $C_R(z)$ for every vertex of depth $z$. A chosen positive ordinary return using an actual arm of length $w$ also satisfies $I_R(z)$ and $W_R(w)$ on that same path.

The tree and colouring constructed below have the following properties. Every ordinary vertex having a root geodesic that begins with a complete arm is red, and its tree path begins with such an arm. Red ordinary vertices whose tree paths begin with complete arms satisfy $I_R(z)$ in both directions, as do ordinary vertices of depth at least two in the subgraph $A$ defined in the proof. Returns from blue ordinary vertices, and inputs in $\mathcal C$ to the other red vertices, also satisfy this bound. Returns from these other red vertices and from blue ordinary vertices are zero.

For each construction, these chosen paths are included among the candidate paths in its resulting fixed orientation. Reversing all edges exchanges inputs and returns.
\end{lemma}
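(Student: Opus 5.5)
The plan is to prove the lemma by induction on $R$, following the contraction scheme behind Theorem~\ref{thm:radius} (Chv\'atal--Thomassen), while tracking recorded attachments and arm lengths. For $R=0$ the graph is the single root and there is nothing to prove. For $R\ge1$, I first build a subgraph $A$ around $o$. For every ordinary root edge $ov$, bridgelessness gives a cycle through $ov$ of length at most $2R+1$. I take it as a shortest path from $v$ back to $o$ avoiding $ov$, which is chosen inside a BFS tree. Each arm pair already forms a cycle of length $2w(s)$, and I orient it as one input arm and one return arm. I take $A$ to be the union of these cycles, oriented consistently along a BFS spanning tree $T$ rooted at $o$. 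Tree edges are directed away from $o$ or towards $o$ according to a red/blue colouring of the depth-one subtrees. In $\mathcal C$, red subtrees point outward and the closing non-tree edges return; $\mathcal H$ reverses the roles on one colour class. The colour rule is forced by the statement: a vertex whose root geodesic starts with a complete arm is red, and $T$ is chosen so that its tree path starts with that arm.

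Next I contract $V(A)$ to a new root $o'$. The quotient $J'$ is loopless after deleting loops, connected and bridgeless, and has $\ecc_{J'}(o')\le R-1$, since every depth-one vertex lies in $A$. Each ordinary root edge of $J'$ records its attachment in $A$. Any arm pairs of $J'$ are inherited arms shortened by one, so the hypotheses on $J$ are preserved. By induction, $J'$ has orientations with bounds $C_{R-1}$, $I_{R-1}$ and $W_{R-1}$. For a vertex $x$ at depth $z$ in $J$, its depth in $J'$ is at most $z-1$. I restore the chosen child path at its recorded attachment $y\in A$ and join $o$ and $y$ inside $A$. In general $y$ lies on a cycle of length at most $2R+1$, which gives at most $2R$ extra arcs and yields $C_{R-1}(z-1)+2R=C_R(z)$. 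When the attachment is a red vertex reached by a single outward arc, or when $x$ has depth at least two in $A$ itself, the cost is one arc and the identity $C_{R-1}(z-1)+1=I_R(z)$ applies. The same identity applies to returns through blue vertices, whose returns are then zero paths, and to the $\mathcal C$ inputs to the other red vertices. Vertices of $A$ are handled directly along their cycles.

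For a positive return using an arm of length $w$, the path climbs to the arm endpoint $s$ and then takes the return arm. I will select the child minimum first, so that the restored path is exactly the one whose arm is counted. The estimate $W_R(w)=R+(R-w)^2$ should then come from combining $I_{R-1}$ at the reduced depth with the arm length, using monotonicity in $w$ and $W_R(1)=F_R$ as the base comparison. Strong connectivity of the final orientation follows from the fact that every vertex has both an input and a return. The symmetry statement holds because reversing every arc swaps the roles of inputs and returns in both constructions.

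The main obstacle will be the bookkeeping that keeps the $I_R$ and $W_R$ bounds valid on the same chosen path after restoration. This matters most when several attachments become parallel root edges of $J'$, or when a child root stands for several attachments. There one must check that the child's selected minimum path is restored at an attachment with the one-arc connection to $o$ rather than the $2R$ one. I would first try to enforce this through the colouring: make every vertex reachable by a single outward arc red and orient $A$ so that each red depth-one vertex is entered directly from $o$. I would then verify the three restoration identities case by case.
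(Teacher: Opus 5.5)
There is a genuine gap in the step you defer as bookkeeping. Your $C_R(z)$ estimate is essentially the Chv\'atal--Thomassen contraction and can be made to work. The sharper bounds cannot be recovered after contraction, because contracting $V(A)$ to a single root $o'$ forgets the depths of the attachments.

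\textbf{The $I_R$ bounds.} A child path ending at an attachment $y$ of depth $j$ costs $j$ arcs to restore in the tree direction, while the child only supplies $C_{R-1}(z')$ with $z'\le z-1$. So returns from blue vertices, inputs in $\mathcal C$ to the other red vertices, and inputs along complete arms come out as $C_{R-1}(z-1)+j$, not $I_R(z)=C_{R-1}(z-1)+1$. Your identity needs $j=1$. No colouring or orientation of $A$ can force this, since in $J'$ every attachment is the same root and the child construction cannot tell them apart.

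\textbf{The $W_R$ bound.} It fails for the same reason. The identity $W_R(w)=w+A_{R-w}$ says the part of the return before the arm must behave like a problem of eccentricity $R-w$ at the arm endpoint $s$. After contraction the child return need not even end at $s$, and the only estimate is $C_{R-1}(z')+w$. For a vertex outside $A$ adjacent to the endpoint of an arm of length $R-1$, this gives $3R-3$ against $W_R(R-1)=R+1$. Also, since you put the arm cycles in $A$, they vanish under contraction, so $J'$ has no ``inherited arms shortened by one''. Your induction hypothesis about arms is never actually used.

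\textbf{The missing idea.} This is the paper's first rule for child graphs: do not contract $A$. For each component $U$ of $J-V(A)$, keep $U$ with its edges to its attachments. Identify only the depth-one attachments with the child root, and give each attachment of parent depth $j\ge2$ two private arms of length $j-1$. Then:
\begin{itemize}
\item Child depths are exactly $z-1$.
\item Restoration deletes the child arm, with net increase $j-(j-1)=1$ in the tree direction and at most $2R+2-2j$ in the other direction. This gives both $C_{R-1}(z-1)+2R=C_R(z)$ and $C_{R-1}(z-1)+1=I_R(z)$.
\item A positive child return at $s$ uses a child arm of length $w-1$ at the same vertex, so $W_{R-1}(w-1)+1=W_R(w)$.
\item A zero child return ends at a depth-one attachment. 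This must be an actual arm endpoint of length one, since an arm interior has degree two and cannot neighbour $U$. This gives $A_{R-1}+1=W_R(1)$.
\end{itemize}
This recursion is why the lemma allows prescribed arm pairs at all.

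\textbf{The colour and ``zero'' claims.} These also need more structure than one short cycle per root edge. The paper takes $A$ to contain every red--blue edge together with its endpoint tree routes. It also makes the arm-free red labels an independent set with no neighbours among main-arm labels. Then each outside component and its attachments have a single colour. Returns from blue vertices and from the other red vertices then end with an ordinary root edge in a blue branch, so they are zero. With your $A$, a blue vertex's child return may end at a red attachment, costing up to $2R+1-j$ to return to $o$ and possibly ending with an arm.
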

\begin{proof}
The construction has four steps. We first consider the components obtained by deleting the root. In the remaining case, we choose a shortest-path tree and orient a subgraph $A$ through the root. We next construct smaller rooted graphs for the components outside $A$. Finally, we restore the child paths and verify their lengths and attachment records.

Construct both orientations jointly by induction on the actual root eccentricity and, at fixed eccentricity, on the number of vertices. Monotonicity then permits a larger prescribed $R$. The one-vertex case is immediate.

\emph{Deleting the root.} If $J-o$ is disconnected, each component together with $o$ is bridgeless: a simple cycle through $o$ stays in one component. Each arm pair lies in one such subgraph, whose root distances are unchanged and whose vertex number is smaller. Combine the separate constructions.

If $J-o$ is connected and $o$ has only one distinct neighbour $v$, every cycle through $o$ is a pair of parallel $ov$ edges. Thus $J-o$ is bridgeless, its depths from $v$ are the parent depths minus one, and any prescribed arms have length one. Both constructions apply $\mathcal H$ to this graph with no prescribed arms. Orient two $ov$ edges oppositely, using the prescribed pair if present, and append them to the child paths. Their lengths are at most $C_{R-1}(z-1)+1=I_R(z)$; a prescribed return also satisfies $A_{R-1}+1=W_R(1)$. The point $v$ itself uses the two root edges. Colour all ordinary vertices red if a pair is present and blue otherwise; in the red case their tree paths begin with the prescribed arm. Orient any remaining root edges arbitrarily.

\emph{The tree and subgraph $A$.} Assume that $J-o$ is connected and $o$ has at least two distinct neighbours. Choose a main and a secondary arm in each pair. Build a shortest-path tree by assigning each arm interior its preceding arm vertex, each arm endpoint its main-arm route, and the remaining parents in increasing depth, preferring a main-arm route whenever possible. Replacing an initial secondary arm by its main arm, then following increasing depths, shows that every ordinary vertex with a positive geodesic receives a main-arm tree route.

Label each tree branch by its first vertex after $o$, and join two labels when their branches are adjacent in $J-o$. This label graph is connected. The branch beginning with a secondary arm contains only its internal vertices and neighbours only the label of its paired main arm; a length-one pair has no separate branch for the secondary arm. Colour the labels of main arms red and, when a separate branch exists, the labels of secondary arms blue. Colour every still uncoloured neighbour of the label of a main arm blue. In the subgraph induced by the remaining uncoloured labels, choose a maximal independent set, colour its labels red, and colour the rest blue. The other red branches contain no arm. Vertices inherit the colour of their branch, independently of $X$ and $Y$.

The red labels not belonging to main arms are pairwise nonadjacent and have no neighbour belonging to a main arm. Every blue label outside the branches containing secondary arm interiors has a red neighbour by construction, and each of the other red labels has such a blue neighbour by connectivity. Each label of a main arm has a blue neighbour unless its length-one pair already supplies the cycle used below.

Let the subgraph $A$ contain all red--blue edges, their endpoint tree routes to $o$, and every length-one arm pair. Orient red tree edges away from $o$, blue tree edges towards $o$, and red--blue edges from red to blue; orient each length-one pair oppositely. The resulting directed cycles through $o$ make $A$ strong. The label properties put every root neighbour in $A$. All arms also lie in $A$: for a longer pair, the edge from its endpoint to its last secondary interior vertex is red--blue, and its supporting routes include the pair. Include and arbitrarily orient any remaining edge with both endpoints in $A$.

At depth $j$ in $A$, the tree direction has length $j$: it is an input for a red vertex and a return for a blue vertex. For a vertex on an endpoint tree route of a red--blue edge, the opposite root direction follows the rest of the corresponding directed cycle: the segment in its own branch has length at most $R-j$, the crossing edge has length one, and the other tree segment has length at most $R$. Their total is at most $(R-j)+1+R=2R+1-j$. A vertex included only through a length-one arm pair is its ordinary endpoint and instead uses the two oppositely oriented arms, with length one in each root direction. 

Since $1\le j\le R$, we have $j\le2R+1-j$ and $C_R(j)-(2R+1-j)=(j-1)(2R-j+1)\ge0$, so both lengths are at most $C_R(j)$. If $j\ge2$, then $I_R(j)-(2R+1-j)=(j-2)(2R-j)\ge0$ and $I_R(j)-j=(j-1)(2R-j-1)\ge0$, so both lengths are at most $I_R(j)$.
Inputs to the other red vertices and returns from blue ordinary vertices follow the tree and satisfy $I_R(j)$ also when $j=1$. An arm endpoint uses its two arms, of length $w\le I_R(w),W_R(w)$.

Fix the returns in $A$ as follows. A red ordinary vertex whose tree path begins with a complete arm, and which lies strictly beyond its arm endpoint, has depth at least two. A supporting red--blue edge for this vertex meets a blue branch containing no secondary arm interiors, because these interiors meet only their own endpoint. Its return therefore ends with an ordinary root edge in a blue branch. The same holds for any other red vertex in $A$, whose label neighbours neither the label of a main arm nor that of a secondary arm. Keep these fixed routes in $A$ available.

\emph{Child graphs.} We use two rules for constructing a rooted child graph from an outside component. The first preserves its parent depths minus one; the second allows shorter routes through attachments of parent depth at least two. The constructions \(\mathcal C\) and \(\mathcal H\) differ only in which rule they use on red components in branches containing no arm.

For each component $U$ of $J-V(A)$, retain its internal edges and all edges to its neighbours $S$ in $A$. Every red--blue edge and every artificial interior lies in $A$, so $U\cup S$ has one colour and contains no artificial interior. A red component lies either in branches containing main arms or in one other red branch; a blue component lies only in branches containing no secondary arm interiors.

For the first rule, identify depth-one attachments with the child root, and give every attachment of parent depth $j\ge2$ two private arms of length $j-1$. Each retained vertex of parent depth $z$ has child depth exactly $z-1$: the assigned values ``parent depth minus one'', interpolated along arms, give the lower bound; its parent tree route to its first attachment gives equality. The child eccentricity is at most $R-1$.

For the second rule, used only for a red component in a branch containing no arm and labelled $v$, identify its possible unique depth-one attachment $\alpha=v$ with the child root, and give every other attachment two private length-one arms. If $\alpha$ is absent, give all attachments such arms. Parent tree routes give child depth at most $z-1$ and eccentricity at most $R-1$. Every ordinary child root edge records $\alpha$. In both child graphs the added arms are geodesic: this follows from the exact depths under the first rule, and is immediate for the length-one arms under the second rule.

The children are connected and bridgeless. Artificial edges lie on paired-arm cycles. If a retained edge were a bridge, its side away from the child root would contain no attachment and would certify a bridge of $J$.

Both constructions use the first rule, with $\mathcal H$ on red components in branches containing main arms and reversed $\mathcal H$ on blue components. Here reversed $\mathcal H$ means that the entire child orientation is reversed before restoration, so a parent input on a blue component comes from a return in the unreversed $\mathcal H$ child orientation. On red components in branches containing no arm, $\mathcal C$ uses the first rule and $\mathcal H$ the second rule, both with $\mathcal H$ children. All calls have smaller actual root eccentricity.

\emph{Restoring paths.} Delete the actual child arm, or restore its recorded zero attachment, and append the fixed path in the parent subgraph $A$ at that attachment. At an attachment of depth $j$, the first rule has net increase one in the tree direction and at most $2R+2-2j$ in the opposite direction. Under the second rule, for $j\ge2$ these increases are at most $j-1\le R-1$ and $2R-j\le2R-2$; at $j=1$ they are one and at most $2R$. Thus every direction satisfies $C_{R-1}(z-1)+2R=C_R(z)$.

Returns from blue ordinary vertices, inputs in $\mathcal C$ to the other red vertices, and inputs to red ordinary vertices whose tree paths begin with complete arms use the tree direction under the first rule, giving $C_{R-1}(z-1)+1=I_R(z)$. For a return from a red component in branches containing main arms, a zero child return records a parent attachment of depth one. This is the first vertex of a main arm. If that arm had length greater than one, this vertex would be an artificial interior of degree two with both neighbours in $A$, and could not neighbour $U$. Thus it is an actual parent-arm endpoint of length one, and restoration adds one. A positive child return uses a child arm of length $j-1$ at a parent attachment of depth $j\ge2$. The induction hypothesis gives the $I_{R-1}(z-1)$ bound on this same child return, and its net restoration is at most $(2R+1-j)-(j-1)\le2R-2$. Both cases therefore give $I_R(z)$.

If the child return from another red vertex under the second rule is positive, it uses a length-one arm at a parent attachment of depth $j\ge2$. Deleting that arm and appending the fixed return in $A$, of length at most $2R+1-j$, gives a net increase at most $2R-j\le2R-2$. Its child depth $z'$ satisfies $z'\le z-1$, so the induction hypothesis and monotonicity give a parent length at most $I_{R-1}(z')+2R-2\le I_{R-1}(z-1)+2R-2=I_R(z)$. A zero child return ends at $\alpha$ and uses its fixed return in $A$ ending with an ordinary root edge in a blue branch. Under the first rule, a return from another red vertex also uses a fixed route in $A$ ending at such an edge. A return from a blue ordinary vertex uses its blue tree route. These parent returns are therefore zero.

A positive parent return occurs only at an arm endpoint or in a red component in branches containing main arms. At an endpoint its length is $w$. A zero child return in such a red component restores a parent arm of length one, and satisfies $A_{R-1}+1=W_R(1)$. If both child and parent returns are positive, their actual arms end at the same vertex, the child arm has length $w-1$, and the net increase is exactly one. Since $W_{R-1}(w-1)+1=W_R(w)$, the $I$ bound and the bound for the actual arm hold on the same path.

Join the child orientations to the orientation of $A$. Every vertex has both root directions, so the orientation is strong. A restored walk can be shortened by deleting closed subwalks without losing its root edge: the root occurs only at the appropriate endpoint. Its parallel-edge identity and complete arm survive as well, because arm interiors occur nowhere in the ordinary part. Thus the actual attachment record is preserved. At actual eccentricity one there are no external components, so the calculation in $A$ completes the induction.
\end{proof}

The next lemma explains the purpose of the second rule. A path that begins with an arm and is only \(\sigma\) longer than a geodesic yields a return with only \(2\sigma\) added to the smaller depth bound. The path need not already follow the orientation.

\begin{lemma}\label{lem:positive-root-path}
Let $x$ be an ordinary vertex of depth $z$ in the fixed $\mathcal H$ orientation. If a simple undirected root--$x$ path of length $z+\sigma$ begins with a complete arm, there is a return of length at most $I_R(z)+2\sigma$. Every new return in the proof is zero; the original positive returns retain their pointwise and actual-arm bounds.
\end{lemma}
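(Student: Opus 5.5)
Induction on the actual root eccentricity along the recursion of Lemma~\ref{lem:rooted-orientations}, following the given path $P$ from the root to $x$. Let $P$ begin with a complete arm ending at some endpoint $s$ of depth $w$, and then continue through ordinary vertices to $x$. The constant $\sigma$ measures the excess of $P$ over a geodesic, so every vertex $y$ on $P$ at position $\ell$ has depth at least $\ell-\sigma$.

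\emph{Base cases.} If $x$ has a root geodesic beginning with a complete arm, then $x$ is red with a main-arm tree route. Lemma~\ref{lem:rooted-orientations} then already gives a return bounded by $I_R(z)$, and nothing new is needed. Otherwise $x$ is blue, or it is red in a branch containing no arm. Its fixed return bound then depends on where it lies:
\begin{itemize}
\item if $x\in A$ with depth at least two, or $x$ is blue, the return satisfies $I_R(z)$ directly;
\item if $x\in A$ at depth one, the return is at most $2R$. Since $P$ starts with an arm of length at least one and is not geodesic, we have $\sigma\ge1$, so $2R\le I_R(1)+2\sigma$ is what we want to verify. I expect this to hold, using $I_R(1)=1$ and $\sigma$ possibly large. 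If it does not, I would instead use the length-one arm structure.
\end{itemize}
All these returns are zero except the original positive ones, which keep their bounds unchanged.

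\emph{Main step: $x$ in a red component $U$ in a branch with no arm.} Under $\mathcal H$, this component was built by the second rule, with length-one arms at every attachment other than $\alpha$. Let $y$ be the last vertex of $P$ lying in $A$, at depth $j$ and at position $\ell\ge j$ along $P$. The suffix of $P$ from $y$ to $x$ has length $z+\sigma-\ell$.
\begin{itemize}
\item If $y\ne\alpha$, the child path consisting of a length-one arm followed by this suffix begins with a complete arm. Its length is $z'+\sigma'$, where $z'\le z-1$ is the child depth, so
\[
\sigma'=1+z+\sigma-\ell-z'.
\]
By induction the child has a return of length at most $I_{R-1}(z')+2\sigma'$. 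Restoring it through the fixed return in $A$ adds at most $2R-j$, and the arm deletion removes one.
\item If $y=\alpha$, the zero-child case applies instead, with the appropriate modification.
\end{itemize}

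The hard point is the comparison: we must show
\[
I_{R-1}(z')+2\sigma'+2R-j-1\le I_R(z)+2\sigma.
\]
I would use the depth increment $2(R-z)$ together with $\ell\ge j$ and $z\le z'+\ell-\ldots$; here the slack coming from $P$ must exactly pay for the loss in $z'$ relative to $z-1$. I would try to write $z-1-z'=\delta$ and use $I_{R-1}(z-1)-I_{R-1}(z-1-\delta)\ge 2\delta$ roughly. This may need $\sigma'\le\sigma-\delta+\ldots$, which I expect to be the main obstacle.

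For blue components, and for red components in branches with main arms, the existing bounds already suffice, since the $C$ and $I$ bounds there do not depend on $P$. Finally, every new return is obtained either by restoring a zero child return or by ending at a zero fixed route in $A$, so it is zero.
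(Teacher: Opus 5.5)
Your proposal has a genuine gap, and it lies in the two cases you leave open.

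\textbf{The depth-one and $y=\alpha$ cases.} For $x=v$, a red branch head with no arm at depth one in $A$, the only return you have is the fixed one. It is known only to have length at most $2R$. The inequality $2R\le I_R(1)+2\sigma=1+2\sigma$ fails whenever $\sigma<R$. For example, an arm of length one to $s$, then a blue depth-one vertex $b$, then $v$, gives $\sigma=2$ with $R$ arbitrary; ``the length-one arm structure'' does not help.

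The same obstruction defeats your deferred case $y=\alpha$. A zero child return ends at $v$, and restoring it with the fixed return from $v$ gives only $C_{R-1}(z-1)+2R=C_R(z)=I_R(z)+2R-1$.

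The missing idea is to build a \emph{new} zero return from $v$ out of the given path.
\begin{itemize}
\item $P$ starts in a main-arm branch, and the label $v$ is adjacent neither to a main-arm label nor to a branch of secondary-arm interiors. So $P$ must enter the branch of $v$ from a blue branch. Let $bc$ be its last such entry, with depths $d_b,d_c$.
\item This red--blue edge lies in $A$. So the red tree from $v$ to $c$, the arc $c\to b$, and the blue tree down to the root form a directed return of length $d_c+d_b$ ending with an ordinary root edge.
\item Write $j$ for the depth of the last vertex of $P$ in $A$ and $z-j+\tau$ for the length of the final segment. Measuring the pieces of $P$ gives $d_c+d_b\le2j+\sigma-\tau-1$ and $\tau\le\sigma$.
\item For $x=v$ this is at most $\sigma+1\le I_R(1)+2\sigma$.
\item Whenever $j\le\tau+1$ (in particular $j=1$), it is at most $2\sigma+1$. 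So a zero child return followed by this route has length at most $C_{R-1}(z-1)+2\sigma+1=I_R(z)+2\sigma$. A positive child return already restores within $I_R(z)$.
\end{itemize}

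\textbf{The inductive step for $y\ne\alpha$.} This also needs repair. The child return supplied by the induction hypothesis is restored at its own actual attachment, not at $y$. A zero child return ends at $\alpha=v$ and costs up to $2R$; a positive one at depth $j'\ge2$ costs up to $2R-j'$. So your figure $2R-j-1$ is unjustified.

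With cost $2R$, monotonicity in $z'$ gives a bound of $I_R(z)+2\sigma+6-2\ell$, so the induction closes only if $\ell\ge3$. This does hold, because the path needs at least two edges to pass from the main arm's branch to the nonadjacent branch $v$. But you never use it, and this is precisely the comparison you call the main obstacle.

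The paper needs no induction here. If $j\ge\tau+2$, the length-one arm at $y$ followed by the final segment has length at most $z-1$, whereas every zero child root path has length at least $z-1$. So it is a child geodesic beginning with an arm. Then $x$ is red in the child with a main-arm tree route, Lemma~\ref{lem:rooted-orientations} gives an $I_{R-1}(z-1)$ child return, and restoration yields $I_R(z)+2\le I_R(z)+2\sigma$. If $j\le\tau+1$, the $bc$ route above finishes the proof.
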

\begin{proof}
When $\sigma=0$, the tree-parent choice preferring a main-arm route makes $x$ red and gives it a tree path beginning with a complete arm, so Lemma~\ref{lem:rooted-orientations} supplies the required bound. Suppose $\sigma\ge1$. Red ordinary vertices whose tree paths begin with complete arms, blue ordinary vertices, and ordinary vertices in $A$ of depth at least two already have the $I_R(z)$ return bound. Thus only another red vertex outside $A$, or another red vertex in $A$ of depth one, remains. We first consider the external case, and write $v$ for its branch label.

Let $a$ be the given path's last vertex in $A$, of depth $j$, and write the length of its final segment as $z-j+\tau$, where $\tau\ge0$. The starting arm ends in a branch containing a main arm, so the path must enter the branch labelled $v$ through a blue branch containing no secondary arm interiors. Let $bc$ be its last blue--red entry, with depths $d_b,d_c$. Write $L$ for the length of the $c$--$a$ segment and $d_b+e$ for that of the root--$b$ prefix. Then $e\ge0$ and $\sigma=e+\tau+d_b+1+L-j$. Since $d_b+1+L\ge j$, we also have $\tau\le\sigma$. Since $d_c\le j+L$, the fixed return in $A$ along the red tree from $v$ to $c$, across $cb$, and down the blue tree has length $d_c+d_b\le2j+\sigma-\tau-1$. It ends with an ordinary root edge: a secondary arm interior cannot neighbour a red branch containing no arm.

If $j\le\tau+1$, this continuation has length at most $2\sigma+1$. A zero child return ends at $v$, so appending it gives length at most $C_{R-1}(z-1)+2\sigma+1=I_R(z)+2\sigma$. A positive child return already restores to $I_R(z)$ by Lemma~\ref{lem:rooted-orientations}.

If $j\ge\tau+2$, the length-one child arm at $a$ followed by the final segment has length $1+z-j+\tau\le z-1$. Every zero child root path has length at least $z-1$: appending its recorded parent root edge would otherwise contradict the parent depth $z$. Thus $x$ has a positive child geodesic. If its child depth is less than $z-1$, every child geodesic is positive; if it equals $z-1$, the root path through the arm at $a$ is geodesic. The tree-parent choice preferring a main-arm route makes $x$ red in the child and gives it a tree path beginning with a complete arm. Its $I$ return restores to at most $I_{R-1}(z-1)+2R=I_R(z)+2\le I_R(z)+2\sigma$.

If $x=v$ is another red vertex in $A$ of depth one, the same notation gives $\sigma=d_b+e+L$ and $d_c\le1+L$. Its fixed return in $A$ therefore has length at most $\sigma+1\le I_R(1)+2\sigma$. If deleting the root disconnects the graph, the simple path lies in one component together with the root, and we restrict to that subgraph. The one-neighbour case already has the $I$ bound, and smaller actual eccentricity is handled by monotonicity. Every new continuation ends with an ordinary root edge in a blue branch; shortening preserves it by Lemma~\ref{lem:rooted-orientations}.
\end{proof}

\subsection{Using the original graph}

The basic bounds depend only on the auxiliary root eccentricity. We now use the original radius and diameter as well. A first branch vertex must be far from some vertex of the original graph; a shortest path to that vertex forces an exit from the current component. Lemma~\ref{lem:positive-root-path} turns this exit into a shorter directed return. To use the argument recursively, every original exit must remain recorded.

Suppose that the ordinary vertices are retained from a connected graph $G$ of diameter $D$ and radius $\rho$. At every recursive step, every original edge leaving an ordinary component is recorded as an attachment edge, and every ordinary root edge retains the identity of its endpoint before identification. These two requirements persist under Lemma~\ref{lem:auxiliary-attachments} and both rules for constructing the child graph in Lemma~\ref{lem:rooted-orientations}. An older positive attachment lies in the new subgraph $A$ through its arms; a neighbour of an older zero attachment lies there because every root neighbour is in $A$. Neither can enter a new external component. Every other original exit is retained as a new attachment, and artificial interiors have no original incident edges.

\begin{lemma}\label{lem:ambient-savings}
Assume the two requirements on retained edges and their original endpoints stated above, $R<\rho$, and $0\le\lambda=\rho+R-D\le R$. In either $\mathcal H$ or $\mathcal C$, both ordinary root directions have length at most $A_R-g$, where $g=(2\lambda-7)_+$. In $\mathcal H$, an ordinary vertex of depth $z$ has a return of length at most $C_R(z)-\bigl(2(\rho+z-D)-5\bigr)_+$. A selected positive return with actual arm length $w$ satisfies $I_R(z)$, $W_R(w)$ and $F_R-g$ on the same path. These bounds hold in the same fixed recursive orientations.
\end{lemma}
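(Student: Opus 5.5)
We will follow the induction of Lemma~\ref{lem:rooted-orientations}, on the actual root eccentricity and then on the number of vertices, and use the original graph at one place only: a first branch vertex near the root. The basic heuristic is the one announced before the lemma. An ordinary vertex $x$ of depth $z$ has some vertex $y$ of $G$ with $\dist_G(x,y)\ge\rho$, since $x$ is not a centre of $G$ with smaller eccentricity. Because $R<\rho$, such a $y$ cannot be reached inside the current rooted graph along depths alone. A shortest $x$--$y$ path in $G$ has length at most $D$ and must leave the current ordinary component through a recorded exit. By the two requirements on retained edges, that exit is an attachment edge whose original endpoint is known. This gives a simple root--$x$ path beginning with a complete arm, or ending with an ordinary root edge whose endpoint is near $y$.

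The key estimate compares its length $z+\sigma$ with the depth. Comparing $\rho\le\dist(x,y)$ with the path through the exit, we expect $\sigma\le D-\rho-z+O(1)$ for the relevant vertices. Equivalently, the slack is $z-\sigma\ge\rho+z-D-O(1)$, or in terms of $\lambda=\rho+R-D$, the saving is $R-z+\lambda-O(1)$.

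For the $\mathcal H$ return bound we then apply Lemma~\ref{lem:positive-root-path}. It gives a return of length at most $I_R(z)+2\sigma=C_R(z)-(2R-z+1)+(z-1)(\ldots)$; more simply, we compare directly with $C_R(z)-C_R(z-1)=2R-2z+2$. Substituting the bound on $\sigma$ should leave exactly $C_R(z)-(2(\rho+z-D)-5)$. The constant $5$ absorbs the $\pm1$ losses from the exit edge, from restoring the recorded attachment, and from the $+1$ in $I$. If the path found is zero rather than positive, the fixed return already has the $I$ bound and is not worse.

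For the uniform bound $A_R-g$ in both constructions and both directions, we maximise the previous pointwise bound over $z$ together with the slack $R-z$ in $C_R(z)\le A_R-(R-z)(R-z+1)$. The saving is the larger of the two terms, and at $z=R$ it is $2\lambda-5$ in $\mathcal H$. For inputs, and for $\mathcal C$, we use reversal symmetry and the child-restoration identities $C_{R-1}(z-1)+2R=C_R(z)$. At each restoration the parameters $(R,\rho,D)$ effectively shift as $R\mapsto R-1$ with $\lambda$ decreasing by at most one. We therefore verify that $A_{R-1}-g'+2R\le A_R-g$ with $g'\ge g-2$, and this accounts for the weaker constant $7$.

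The positive-return statement follows from the same restoration identities. We check $W_{R-1}(w-1)+1=W_R(w)$ and $F_{R-1}-g'+2R-2\le F_R-g$ on the very path selected by the child minimum, and we invoke the last sentence of Lemma~\ref{lem:positive-root-path}, which says that new returns are zero and so do not disturb the original positive returns.

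The main obstacle is bookkeeping across recursion. We must show that the exit forced by $y$ is still a recorded attachment at the current level, using the persistence argument stated before the lemma. We must also show that the savings accumulated at one level are not lost when paths are restored and shortened, so that all bounds hold in the single fixed orientation. I would first verify the base case $R=\rho-1$ with $\lambda$ maximal, and then track $\lambda$ through one restoration step.
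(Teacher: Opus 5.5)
Your outline has the right overall shape: induction as in Lemma~\ref{lem:rooted-orientations}, a far vertex forcing a recorded exit, Lemma~\ref{lem:positive-root-path} for $\mathcal H$ returns, then propagation by restoration. But the one step that uses $G$ is set up so that it cannot work. Choosing $y$ with $\dist_G(x,y)\ge\rho$ gives no upper bound on the length $m$ of the segment before the first exit $a$. You only get $\dist(a,y)\ge\rho-m$, which is circular. Moreover, $y$ may lie inside the component $V$ of $x$, since vertices of $V$ are only known to be within $2R-2$ of one another, and then there is no exit at all.

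The paper first reduces to the only vertices that need a new return: the other red vertices $x$ in a component $V$ outside $A$ in a branch containing no arm. Every other vertex already has an $I_R(z)$ return or a smaller eccentricity. Both $C_R(z)-I_R(z)=2R-1$ and $C_{R-1}(z)=C_R(z)-2z$ cover the required saving, which is at most $(2z-5)_+$. The paper then anchors the far vertex at the first branch vertex $v$, choosing $Z$ with $\dist_G(v,Z)\ge\rho$. The real tree routes give $\dist(v,a)\le h-1$ for an attachment of depth $h$, and $\dist(v,u)\le R-1$ for $u\in V$. Hence $Z\notin V$ and $z-h\le m\le D-\rho+h-1=R-\lambda+h-1$. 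The left inequality rules out $h=1$ whenever the saving is positive, so the child built by the second rule has a length-one arm at $a$.

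The complete-arm root path of length $1+m$ therefore lives in that child, not in the current graph, where $a$ is merely an ordinary vertex of $A$. Lemma~\ref{lem:positive-root-path} is applied in the child, and the result is restored at cost $2R$. Using $h\le R$, this gives $I_{R-1}(z-1)+2R+2(m-z+2)\le C_R(z)-(2\lambda-2i-5)$ with $i=R-z$. Your estimates $\sigma\le D-\rho-z+O(1)$ and ``saving $R-z+\lambda-O(1)$'' have the wrong dependence on $R-z$. The true saving $2(\lambda-i)-5$ shrinks as $i$ grows, and the constant $5$ comes out of exactly this computation.

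The recursive bookkeeping also fails as you state it. Since $A_{R-1}+2R=A_R$ and $F_{R-1}+2R-2=F_R$, both of your checks need $g'\ge g$, not $g'\ge g-2$. A recursion carrying only the $A_R-g$ bound loses two per level. What closes the induction is that child $\mathcal H$ returns satisfy the stronger uniform bound \eqref{eq:uniform-return}, namely $A_{R-1}-(2(\lambda-1)-5)_+=A_{R-1}-g$. This is obtained directly from the geometric argument via $C_R(z)=A_R-i(i+1)$ and $i(i+1)\ge2i$; the two savings add, rather than taking the larger. Returns in $\mathcal C$ from the other red vertices, and inputs to blue vertices in both constructions, restore precisely such child returns, and that is where $5$ becomes $7$.

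The one direction that inherits only $A_{R-1}-(2\lambda-9)_+$ is inputs in $\mathcal H$ to the other red vertices. It survives because second-rule restoration costs at most $R-1$, leaving slack $R+1$. For positive returns, a zero child return restored through a length-one parent arm adds exactly one, so $F_R-g=A_{R-1}-g+1$ follows from the same child bound. A positive-to-positive restoration also adds exactly one and uses $W_R(w)\le W_R(2)=F_R-2R+3\le F_R-g$. Reversal symmetry alone does not transfer the $\mathcal H$ return improvement to inputs.
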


\begin{proof}
We first improve returns in \(\mathcal H\) at each depth. Write $Q(x)$ for the length of a return selected after these improvements. Here $Q(x)$ is a numerical return length in the current rooted graph; $Q$ alone still denotes the fixed geodesic in $G$. We then propagate this improvement through the two recursive constructions, checking the bounds for positive returns on the same restored paths.

There is nothing to prove when $R=0$. For the pointwise $\mathcal H$ bound, write $i=R-z$. The required saving $(2\lambda-2i-5)_+$ is at most $(2z-5)_+$. Thus a smaller actual eccentricity, which gives $C_{R-1}(z)=C_R(z)-2z$, already suffices, as does any existing $I_R(z)$ return. Restriction to a component of $J-o$ together with $o$ preserves root distances, and depth one requires no saving. It remains to consider a red component $V$ outside $A$ in a branch containing no arm and a vertex $x\in V$.

Let $v$ be the first vertex of its tree branch. The tree routes from $v$ to $V$ and its attachments are real: a depth-$h$ attachment is within distance $h-1$ of $v$, and every vertex of $V$ is within distance $R-1$. Choose $Z\in V(G)$ with $\dist_G(v,Z)\ge\rho$. Since $R<\rho$, the vertex $Z$ is outside $V$. A shortest $x$--$Z$ path first leaves $V$ at a recorded attachment $a$. If $a$ has depth $h$ and the initial $x$--$a$ segment has length $m$, then $z-h\le m\le D-\rho+h-1=R-\lambda+h-1$, since the remaining segment has length at least $\rho-h+1$. If the required saving is positive, then $\lambda-i\ge3$, and these inequalities exclude $h=1$. The child constructed by the second rule therefore has a length-one arm at $a$.

Let its actual eccentricity be $R'\le R-1$ and the depth of $x$ be $z'\le z-1$. The arm followed by the real $a$--$x$ segment is a simple root path of length $1+m$. Lemma~\ref{lem:positive-root-path} gives a child return of length at most $I_{R'}(z')+2(1+m-z')$. This expression is nondecreasing in the permitted integer eccentricity and depth: each depth increment in $I$ is at least two, while the other term decreases by two. Increase the eccentricity bound to $R-1$ and then the depth to $z-1$. Restoration at the actual attachment costs at most $2R$, so
\[
\begin{aligned}
Q(x)&\le I_R(z)+2+2(m-z+2)\\
&\le I_R(z)+2(R-\lambda+i)+4=C_R(z)-(2\lambda-2i-5).
\end{aligned}
\]
For zero required saving the original pointwise bound suffices. The new returns are zero returns from the other red vertices, so the original positive returns remain available. Since $i(i+1)\ge2i$, the same calculation also gives
\begin{equation}\label{eq:uniform-return}
Q(x)\le A_R-(2\lambda-5)_+\qquad\text{in }\mathcal H.
\end{equation}

We prove the common ordinary bounds and the positive-return assertions together by the induction of Lemma~\ref{lem:rooted-orientations}. A child with prescribed bound $R-1$ has parameter $\lambda-1$; if this parameter is negative, use the original bound, as the required saving is zero. A return in $\mathcal C$ from another red vertex, or an input to a blue ordinary vertex in either construction, restores an $\mathcal H$ return from a child graph constructed by the first rule. By \eqref{eq:uniform-return} its length is at most $A_{R-1}-g+2R=A_R-g$. Returns in $\mathcal H$ from the other red vertices already satisfy the stronger bound above. An input in $\mathcal H$ to another red vertex uses a child graph constructed by the second rule and has length at most $A_{R-1}-(2\lambda-9)_++R-1=A_R-\bigl[R+1+(2\lambda-9)_+\bigr]\le A_R-g$, since the positive parts differ by at most two. The remaining tree-direction restorations have length at most $I_R(z)\le F_R\le A_R-g$; returns from red ordinary vertices whose tree paths begin with complete arms retain their $I$ bound by the actual-type calculation below. Paths in $A$ have length at most $2R$. When $g=0$ use their original pointwise bounds; otherwise $R\ge4$ and $A_R-g\ge F_R+6\ge2R$.

For a return from a red ordinary vertex whose tree path begins with a complete arm, a zero child return restores through an actual parent-arm endpoint of depth one. The argument above shows that this parent arm has length one, so restoration adds one. On this same restored path, the child pointwise bound gives $I_R(z)$, its general bound gives $W_R(1)=A_{R-1}+1$, and \eqref{eq:uniform-return} gives $F_R-g=A_{R-1}-g+1$.
A positive child return retains its $I$ bound after a net increase of at most $2R-2$. If the restored parent return is also positive with actual arm $w$, then $w\ge2$, its actual child arm has length $w-1$, and the net increase is exactly one. Hence it satisfies $W_R(w)\le W_R(2)=F_R-2R+3\le F_R-g$, because $g\le(2R-7)_+$. A positive return in $A$ has length $w\le R\le F_R-g$, since $(R-1)^2\ge(2R-7)_+$, and keeps its $I,W$ bounds. In the one-distinct-neighbour case, append the prescribed edge to an improved $\mathcal H$ child return; the same length-one calculation applies. The neighbour itself has length one, and combining the orientations on the components of $J-o$ together with $o$ proves the same conclusions.

Finally, a smaller actual eccentricity $R'<R$ gives $A_{R'}\le A_R-2R\le A_R-g$ and $F_{R'}\le F_R-2R+2\le F_R-g$. Monotonicity preserves the pointwise and actual-arm bounds. All assertions therefore hold together in the fixed orientation.
\end{proof}

The following lemma applies the same path improvement to a real path from a positive attachment. In Section~\ref{sec:rebuilding}, it will convert a long minimum return into a lower bound on the distance to that attachment.

\begin{lemma}\label{lem:positive-attachment-path}
Let $x$ be an ordinary vertex of depth $z$ in a fixed $\mathcal H$ orientation with root eccentricity at most $R$. Suppose a real positive attachment $e$ is joined to $x$ by a simple path of length $l$ in the retained real graph, avoiding the auxiliary root and artificial interiors. After adding the returns constructed below to the candidate returns already obtained, their minimum $Q(x)$ satisfies $Q(x)\le I_R(z)+2+2(l-z)_+$. All added returns are zero paths; the original positive returns and their simultaneous bounds remain available.
\end{lemma}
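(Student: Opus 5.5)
The plan is to turn the real $e$--$x$ path into a simple root--$x$ path that begins with a complete arm, and then invoke Lemma~\ref{lem:positive-root-path}. Since $e$ is a positive attachment, it carries a prescribed arm pair of length $w=w(e)=\dist_J(o,e)\ge1$. Prepending one of these arms, say the main arm, to the given real path yields a root--$x$ walk of length $w+l$. The real path avoids the root and the artificial interiors, so this walk is a simple path. Writing $w+l=z+\sigma$ gives $\sigma=w+l-z\ge0$, because $z$ is the depth.

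Lemma~\ref{lem:positive-root-path} then supplies a return of length at most $I_R(z)+2\sigma$, whose newly created returns are all zero. This is sufficient only when $w$ is small; a crude estimate would give $I_R(z)+2w+2(l-z)$, which is too weak. The argument must therefore exploit the positivity of $e$ more carefully. I would rerun the case analysis of Lemma~\ref{lem:positive-root-path} for the path $P$ consisting of the arm followed by the real path. As there, the nontrivial vertices $x$ are the other red vertices, either outside $A$ or of depth one in $A$, lying in a branch labelled $v$ that contains no arm. Let $a$ be the last vertex of $P$ in $A$, of depth $j$. The arm lies in a main-arm branch, so $P$ enters branch $v$ through a last blue--red edge $bc$. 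The key replacement is to measure the prefix up to $b$ from $e$ rather than from the root. The arm contributes exactly $w=\dist(o,e)$, so $d_b\le w+(\text{real length to }b)$ and $d_c\le j+L$. The fixed return in $A$ through $cb$ then has length $d_c+d_b$, which is bounded by the lengths of the real segments plus $2j$ minus the corresponding slack. Meanwhile $z\le j+(\text{length of the final real segment})$.

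Two cases follow, as in Lemma~\ref{lem:positive-root-path}. If $j$ is at most one more than the excess of the final segment, I would append a zero child return ending at $v$. This gives at most $C_{R-1}(z-1)+1+2(\text{excess})=I_R(z)+2(\text{excess})$, and the excess here should be at most $(l-z)_+$ plus a constant, whence the additive $2$. Otherwise the length-one child arm at $a$ yields a positive child geodesic. Restoring its $I_{R-1}$ return costs at most $I_R(z)+2$, which already lies within the claimed bound. The depth-one case in $A$ and the disconnected-root or one-neighbour reductions are handled verbatim as before.

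The main obstacle is the accounting in the first case. The excess must be measured against the real length $l$ alone, so the arm length $w$ has to cancel. It cancels because the depth $d_b$ is controlled by the distance from the root through $e$, and the arm is geodesic, with length exactly $\dist_J(o,e)$ by Lemma~\ref{lem:auxiliary-attachments} and the child rules. I would first try to show that the loss relative to a geodesic splits into a part before $b$, which the blue tree absorbs, and a part after $c$, which is bounded by $(l-z)_++1$. Finally, as in Lemma~\ref{lem:positive-root-path}, every added return ends with an ordinary root edge in a blue branch, so it is zero. The previously selected positive returns are untouched, so their simultaneous bounds survive after taking the minimum.
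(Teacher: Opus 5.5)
Your case $j\ge\tau+2$ is fine. The other case does not go through, because the arm length $w$ does not cancel. For $2\le j\le\tau+1$ you append a zero child return at $v$ to the fixed return through the last blue--red edge $bc$, so you need $d_c+d_b\le 3+2(l-z)_+$. Your only controls on $d_b$ are $d_b\le w+\mu$, which goes through $e$ and reintroduces $w$, and $d_b\le d_c+1\le j+L+1$. Neither is $(l-z)_++O(1)$. Nothing in your accounting rules out the following: $e$ adjacent to $b$, $c=a$, $d_b=d_c=j$ (so $w\ge j-1$), and a final segment of length $m=z-1$, i.e.\ $\tau=j-1$. Then $l=z+1$ and the target is $I_R(z)+4$. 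Your route only gives $C_{R-1}(z-1)+2j=I_R(z)+2j-1$, which is too large once $j\ge3$. For the same reason, your expectation that the final-segment excess $\tau=m-z+j$ is $(l-z)_+$ plus a constant fails: it can be as large as $l-z+j-2$. The descent down the blue tree from $b$ costs about $j$, and the only part of your path that pays for it is the arm, which the statement does not let you count.

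The missing idea is to discard the arm at $e$ altogether and recurse into the child. Let $u$ be the last vertex of $A$ on the real path, of depth $j$, and let $m$ be the length of the final segment. The vertex $e$ lies in a branch with a main-arm label, and $u$ lies in the other red branch; these labels are distinct and nonadjacent. Hence the $e$--$u$ segment has length at least two, so $m\le l-2$.

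If $j\ge2$, whatever $\tau$ is, the second rule gives a length-one child arm at $u$. Apply Lemma~\ref{lem:positive-root-path} \emph{inside the $\mathcal H$ child} to this arm followed by the final segment. This gives a child return of length at most $I_{R'}(z')+2(1+m-z')_+$. Monotonicity, the restoration cost $2R$, and $I_{R-1}(z-1)+2R=I_R(z)+2$ then give $I_R(z)+2+2(m-z+2)_+\le I_R(z)+2+2(l-z)_+$. This recursion is where the additive $2$ comes from.

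If $j=1$, let $\nu\le l-z+1$ be the length of the $e$--$u$ prefix, and take its last blue-to-red crossing. Bound the depths of that crossing through the distance to $u$, which has depth one, not through $e$. This gives $d_c\le\nu-1$ and $d_b\le\nu$, so the fixed zero return in $A$ has length at most $2\nu-1$. Then $Q(x)\le C_{R-1}(z-1)+2\nu-1\le I_R(z)+2(l-z)$. Your $j=1$ subcase can be repaired this way, but the range $2\le j\le\tau+1$ genuinely needs the recursion into the child.
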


\begin{proof}
Vertices already having the $I_R(z)$ bound need no new route. Suppose $x$ lies in a red component $V$ outside $A$ in a branch containing no arm. Let $u$ be the last vertex of $A$ on the given path, of depth $j$, and let the remaining length be $m$. The positive attachment $e$ lies in a branch containing a main arm, whereas $u$ lies in the other red branch containing $V$. The labels of these branches are distinct and nonadjacent, so the preceding real segment has length at least two; hence $m\le l-2$.

If $j\ge2$, the child constructed by the second rule has a length-one arm at $u$. Together with the last segment it gives a positive root path of length $1+m$. For child eccentricity $R'\le R-1$ and depth $z'\le z-1$, Lemma~\ref{lem:positive-root-path} bounds a child return by $I_{R'}(z')+2(1+m-z')_+$. This expression is nondecreasing in the permitted integer eccentricity and depth: a depth increment increases $I_{R'}$ by at least two and reduces the positive-part term by at most two. Increase the eccentricity to $R-1$, then the depth to $z-1$, and restore at the return's own attachment with increase at most $2R$. Since $I_{R-1}(z-1)+2R=I_R(z)+2$, this gives $Q(x)\le I_R(z)+2+2(m-z+2)_+\le I_R(z)+2+2(l-z)_+$.

If $j=1$, $u$ is the first vertex of the red branch containing no arm. Let $\nu$ be the length of the $e$--$u$ prefix. The last segment has length at least $z-1$, so $\nu\le l-z+1$. Use the last blue-to-red crossing of this prefix. The suffix from its red endpoint to $u$ has length at most $\nu-2$, since $e$ lies in a branch containing a main arm. As $u$ has depth one, the red endpoint has depth at most $\nu-1$ and its blue neighbour at most $\nu$; the fixed red-tree, crossing-edge, blue-tree route from $u$ to the root is a zero return of length at most $2\nu-1$. A zero child return concatenates at this same $u$, giving $Q(x)\le C_{R-1}(z-1)+2\nu-1=I_R(z)+2\nu-2\le I_R(z)+2(l-z)$. A positive child return already restores within $I_R(z)$. Another red vertex in $A$ of depth one is handled by the same route in $A$.

If deleting the root disconnects the graph, restrict to the component together with the root that contains the given path. Monotonicity handles smaller actual eccentricity. Every new route ends with an ordinary root edge, whose attachment record is preserved when closed subwalks are deleted.
\end{proof}

We now specify how paths are selected, since Section~\ref{sec:rebuilding} compares selected parent and child paths. Use finite lists of candidate inputs and returns supplied by the preceding proofs. At every recursive call, include all proved improvements applicable to that call in its fixed orientation, together with all candidates already obtained. Lemmas~\ref{lem:positive-root-path} and~\ref{lem:positive-attachment-path} apply directly to returns in $\mathcal H$ calls; the latter contributes the routes for every real path satisfying its hypotheses. Their improvements enter the recursive $\mathcal C$ construction through restoration from its $\mathcal H$ children, with reversal converting child returns into inputs. These routes leave all edge directions unchanged, and the direct additions in $\mathcal H$ calls are zero returns. Complete the candidate lists in each child graph first, then restore and verify every child candidate in the parent before taking a parent minimum. Simplify each restored walk and check all its required length bounds. For a positive return, also check the bounds involving $I_R$, $F_R$ and $W_R$, using its own actual arm on that path. After reversing every edge, the corresponding checks apply to inputs. The preceding proofs provide nonempty lists.

This convention preserves the comparison between a parent minimum and a child minimum plus its restoration cost. Positive restorations satisfy all their bounds by Lemma~\ref{lem:ambient-savings}. If an ordinary restoration exceeds a required parent bound, a proved candidate satisfying all required bounds is shorter; using it instead preserves the comparison.

\subsection{Selecting child paths}

The even-diameter argument in Section~\ref{sec:rebuilding} needs an exact relation between a parent path and its selected child path when the parent arm has length one. We therefore use one independent outer step: after completing and verifying the child candidate list, select its minimum and retain the prescribed restored path without taking another parent minimum. For a positive return restored through a length-one parent arm, this retains the exact increase of one proved below. The recursion already proved for \(\mathcal C\) is unchanged.

\begin{lemma}\label{lem:independent-root-step}
Use the subgraph $A$ formed by the red--blue edges, their endpoint tree routes to the root, and every length-one arm pair. Construct every child graph by the first rule, and use $\mathcal C$ on red components and the reversal of $\mathcal C$ on blue components. For each red external return, including one from a red vertex whose tree path begins with a complete arm, choose a minimum available $\mathcal C$ child return before restoring its actual attachment; use the reversed rule for a blue external input. If the root has one distinct neighbour, call $\mathcal C$ on the graph with the parent root deleted, rooted at that neighbour and with no designated arms, and append the parent edge to a minimum child return.

Under the two requirements on retained edges and their original endpoints stated above, suppose $R<\rho$ and $\tau=\rho+R-D\in[0,R]$. Both ordinary parent directions have length at most $A_R-g$, where $g=(2\tau-9)_+$. The original pointwise bounds remain valid, and a selected positive return with actual arm $w$ satisfies $I_R(z)$ and $\min\{W_R(w),F_R-g\}$ on the same path. A selected positive return restored from a child through a length-one parent arm has length exactly the chosen child minimum plus one. Every external return from another red vertex and every external input to a blue ordinary vertex has length at most its corresponding child minimum plus $2R$. Reversing the entire orientation exchanges inputs and returns.
\end{lemma}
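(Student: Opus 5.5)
The proof is a single outer step on top of the fixed recursive $\mathcal C$ orientation of Lemma~\ref{lem:rooted-orientations}, with the child bounds supplied by Lemma~\ref{lem:ambient-savings}. We build $A$ exactly as in the $\mathcal H$/$\mathcal C$ construction, with the same tree, colouring and fixed routes. Every external component is handled by the first rule, so each retained vertex of parent depth $z$ has exact child depth $z-1$ and child eccentricity at most $R-1$. The two requirements on retained edges persist, as remarked before Lemma~\ref{lem:ambient-savings}. The child therefore has parameter $\tau-1$, and Lemma~\ref{lem:ambient-savings} applied to the $\mathcal C$ child gives both ordinary child directions of length at most $A_{R-1}-(2\tau-9)_+$, since $(2(\tau-1)-7)_+=(2\tau-9)_+$. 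If $\tau-1<0$, the required saving $g$ is zero and the original bounds suffice.

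First I verify the parent bounds by restoration. In the tree direction, restoration adds one, which gives $I_R(z)\le F_R$. In the opposite direction, at an attachment of depth $j$, the net increase is at most $2R+2-2j\le 2R$. Thus every ordinary direction is at most $A_{R-1}-g+2R=A_R-g$, and every external return from another red vertex, or input to a blue vertex, is at most its selected child minimum plus $2R$; these are exactly the stated comparisons. Paths inside $A$ have length at most $2R+1-j$, which is at most $A_R-g$ by the same estimate used in Lemma~\ref{lem:ambient-savings}: either $g=0$, or $R\ge5$ and $A_R-g\ge F_R+6$. In the one-neighbour case, appending the parent edge to a minimum child return gives $A_{R-1}-g+1\le A_R-g$ and $C_{R-1}(z-1)+1=I_R(z)$. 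The disconnected-root case combines components, and a smaller actual eccentricity is handled by monotonicity, as before. The original pointwise bounds follow from the restoration identities, because the selected child minimum is no longer than the child candidate that satisfies its own pointwise bound.

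Next come the positive returns. These occur at arm endpoints, where the length is $w$ and all bounds are immediate, or in red components in branches containing main arms. If the child minimum is a zero return, its recorded attachment has depth one and is the endpoint of a parent arm. As in Lemma~\ref{lem:rooted-orientations}, that arm has length one, since an artificial interior cannot neighbour $U$. Restoration deletes nothing and appends exactly that arm, so the length is exactly the child minimum plus one. Lemma~\ref{lem:ambient-savings} bounds the child minimum by $A_{R-1}-g$, so the parent return satisfies $W_R(1)=F_R=A_{R-1}+1$ and $F_R-g$. The $I_R(z)$ bound needs more care, because the minimum may not be the candidate with the pointwise bound. Here I use the fact that restoration through a length-one arm adds exactly one for every child return with a zero root edge at that attachment. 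Any child candidate with the $C_{R-1}(z-1)$ bound is either a zero return at such an attachment, and then restores to at most $I_R(z)$, or is positive and restores within $I_R(z)$. Since the minimum is no longer, the result follows. If the child minimum is positive with child arm $w-1$, the parent arm has length $w$ and the net increase is exactly one. Then $W_{R-1}(w-1)+1=W_R(w)$ and $I_{R-1}(z-1)+2R-2\ge$ the restored length give the same-path bounds, while $w\ge2$ gives $W_R(w)\le F_R-2R+3\le F_R-g$.

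\textbf{Main obstacle.} The hardest step is keeping the exact ``child minimum plus one'' identity while still asserting the same-path $I_R$ and $W_R$ bounds. The outer step forbids taking a second parent minimum, so the bounds must hold for whichever child path happens to be minimal. To handle this, I compare the minimum against the verified child candidate list and use the exactness of the length-one restoration, as sketched above. Reversal symmetry then transfers everything from returns to inputs on blue components.
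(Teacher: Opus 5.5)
Your proposal is correct and follows the paper's proof essentially step by step. It uses the child bounds $A_{R-1}-g$ from Lemma~\ref{lem:ambient-savings} at parameter $\tau-1$, restoration costs of one in the tree direction and at most $2R$ otherwise, the degree-two argument forcing a length-one parent arm for a zero child return, and $W_R(2)=F_R-2R+3\le F_R-g$ in the doubly positive case. The one point you assert rather than check is that the restored zero-child path (and the appended path in the one-neighbour case) is already simple, so no closed subwalk is deleted and the increase is exactly one. The paper justifies this: every depth-one attachment is identified with the child root, so the child path meets its final attachment only at its end, and the parent root does not lie in the child graph.
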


\begin{proof}
The child calls are to the already proved construction $\mathcal C$; this is an independent outer step, not a change to its recursion. If deleting the root disconnects the graph, apply the construction separately to each component together with the root, as in Lemma~\ref{lem:rooted-orientations}. Assume $R\ge1$, since the case $R=0$ is empty. The first rule gives depths exactly one less, prescribed bound $R-1$, and parameter $\tau-1$. Lemma~\ref{lem:ambient-savings} bounds both child directions by $A_{R-1}-g$, using the original bound if the child parameter is negative.

Returns from the other red vertices and inputs to blue ordinary vertices restore their selected child returns with increase at most $2R$, proving both the ordinary bound and the child-minimum comparison. Inputs to the other red vertices, returns from blue ordinary vertices, and inputs to red ordinary vertices whose tree paths begin with complete arms restore in the tree direction and add one, giving $I_R(z)$. A return from a red ordinary vertex whose tree path begins with a complete arm adds one when its child return is zero at an actual parent-arm endpoint of depth one. This parent arm has length one by the degree-two argument above. If the child return is positive, its own $I$ bound increases by at most $2R-2$. In both cases the parent return retains $I_R(z)$. These bounds are at most $A_R-g$.

Suppose a positive return from a red ordinary vertex whose tree path begins with a complete arm uses a zero child return. Every depth-one attachment was identified with the child root. Consequently, the simple child path has not visited its final recorded attachment earlier, and its retained part does not contain the parent root. Appending the length-one parent arm is therefore simple and increases the length by exactly one. On this path the child depth bound gives $I_R(z)$, its general length bound gives $W_R(1)=A_{R-1}+1$, and its improved length bound gives $F_R-g=A_{R-1}-g+1$.
If both child and parent returns are positive, the actual child arm has length $w-1$, where $w\ge2$, and restoration again has net increase one. Thus the same route satisfies its pointwise $I$ bound and $W_R(w)\le W_R(2)=F_R-2R+3\le F_R-g$. A return ending with an ordinary root edge needs no positive-return bound.

For paths in $A$ use the original estimates if $g=0$. Otherwise $R\ge5$ and $g\le2R-9$, so $A_R-g\ge F_R+8\ge2R$ and $F_R-g\ge R$. In the one-distinct-neighbour case, the child bound $A_{R-1}-g$, its depth bound, and the appended edge give $F_R-g$, $I_R(z)$ and $W_R(1)$ together. Appending creates no repeated endpoint, so the exact increase of one is preserved. The neighbour itself has length one. A smaller actual eccentricity $R'<R$ is covered by $A_{R'}\le A_R-2R$ and $F_{R'}\le F_R-2R+2$ and the usual monotonicity.

Verified shortening or deletion of closed subwalks preserves the ordinary comparisons and the recorded attachment. A length-one positive restoration is already simple and satisfies every required bound: retain this prescribed route, with no further parent minimization or shortening. Its exact relation to the chosen child minimum is therefore unchanged. All attachment records are also retained when all edge directions are reversed.
\end{proof}

\section{Joining the rooted paths}\label{sec:joining-paths}

We now join an outside return, a path in the central subgraph, and an outside input. The goal is Lemma~\ref{lem:initial-estimate}: the initial orientation already has the required diameter unless one selected class of paths is too long. A nonnegative integer \(\delta\), defined below from those lengths, measures their saving from the general root bounds. A larger \(\delta\) makes the diameter estimate smaller.

Retain $d=2r+\varepsilon$ and $\rho=r+t+\varepsilon$, and write $L=\lceil d^2/2\rceil=2r(r+\varepsilon)+\varepsilon$. Assume $d\ge3$, and $t\ge1$ when $d$ is even; the excluded cases are settled in the final proof. The root bounds are $r-1+\varepsilon$ on $X$ and $r$ on $Y$. After restoration, a selected return runs from the source to its actual attachment, and a selected input runs from its actual attachment to the target.

\begin{lemma}\label{lem:tail-estimates}
Let a selected auxiliary root path have length $m$, actual attachment $s$, and length $l$ after restoration. With $\varphi=a$ on $X$ and $\varphi=d+\rho-b$ on $Y$, the restored path satisfies
\[
\begin{array}{c|cc}
&\text{source: }l-\varphi(s)&\text{target: }l+\varphi(s)\\
X&m-t+1-\varepsilon&m+t-1+2\varepsilon\\
Y&m-d-t+1-2\varepsilon&m+d+t+\varepsilon
\end{array}
\]
where each entry is an upper bound. The bounds also hold for an endpoint in $H$, with restored path length zero and numerical auxiliary length $m=w(s)$.
\end{lemma}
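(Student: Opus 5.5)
Restoration deletes the actual arm at the recorded attachment $s$, or, for a zero path, removes the ordinary root edge and restores its recorded endpoint. In the positive case the restored length is therefore $l=m-w(s)$. In the zero case $s$ is a zero-weight attachment, so $w(s)=0$ and $l=m-1$; this is at most $m-w(s)$ as well. So in every case I will use $l\le m-w(s)$. The whole statement then reduces to computing $w(s)\pm\varphi(s)$ from the weights of Lemma~\ref{lem:auxiliary-attachments} and bounding it by the partition inequalities. For an endpoint in $H$, the convention $l=0$, $m=w(s)$ gives equality $l=m-w(s)$, so the same computation applies there.

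For $s\in X$ we have $w(s)=b(s)-r-1$ and $\varphi(s)=a(s)$. For a source I need a lower bound on $w(s)+\varphi(s)=a(s)+b(s)-r-1$. Using $a+b\ge\rho=r+t+\varepsilon$, this is at least $t+\varepsilon-1$. Hence $l-\varphi(s)\le m-t+1-\varepsilon$.

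For a target I need a lower bound on $w(s)-\varphi(s)=b(s)-a(s)-r-1$. From $a-b\le\theta=t-r-2+2\varepsilon$ we get $b(s)-a(s)\ge r+2-t-2\varepsilon$. So $w(s)-\varphi(s)\ge 1-t-2\varepsilon$, and $l+\varphi(s)\le m+t-1+2\varepsilon$.

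For $s\in Y$ we have $w(s)=a(s)-t-\varepsilon$ and $\varphi(s)=d+\rho-b(s)$. For a source, $w(s)+\varphi(s)=a(s)-b(s)+d+\rho-t-\varepsilon=a(s)-b(s)+d+r$. Membership in $Y$ gives $a-b\ge\theta+1=t-r-1+2\varepsilon$, so the sum is at least $d+t-1+2\varepsilon$. This yields $l-\varphi(s)\le m-d-t+1-2\varepsilon$.

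For a target, $w(s)-\varphi(s)=a(s)+b(s)-t-\varepsilon-d-\rho$. Using $a+b\ge\rho$, this is at least $-d-t-\varepsilon$, so $l+\varphi(s)\le m+d+t+\varepsilon$.

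The only step needing care is the first one: the inequality $l\le m-w(s)$ must hold with the attachment actually used by the selected path. A positive path uses exactly one complete arm, and its length is $w(s)$ because the arms are geodesic in Lemma~\ref{lem:auxiliary-attachments}. A zero path's ordinary root edge records an endpoint of weight zero. The attachment records are preserved through the recursion by Lemmas~\ref{lem:rooted-orientations} and~\ref{lem:independent-root-step}, so this holds for every selected path. After that, the four table entries are the one-line computations above.
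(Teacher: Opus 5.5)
Your proof matches the paper's. You reduce to the restoration identity $l=m-w(s)$, then bound $w(s)\pm\varphi(s)$ using $a+b\ge\rho$ and the threshold $\theta$ that defines $X$ and $Y$. All four table computations are correct, and so is the convention for endpoints in $H$.

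One step needs correcting. For a zero path, restoration deletes nothing. The ordinary root edge ending at $o$ is simply the real attachment edge ending at the recorded vertex $s$. So $l=m=m-w(s)$ exactly, as in the proof of Lemma~\ref{lem:auxiliary-attachments}, and not $l=m-1$. You only use $l\le m-w(s)$, so the conclusion is unaffected, but the stated reason is wrong and should be replaced.
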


\begin{proof}
Restoration gives $l=m-w(s)$, including zero attachments. On $X$, the partition inequalities give $a+w\ge t+\varepsilon-1$ and $a-w\le t-1+2\varepsilon$. On $Y$, they give $w+\varphi\ge d+t-1+2\varepsilon$ and $\varphi-w\le d+t+\varepsilon$. Subtract or add these bounds to $m$. The same calculation with $m=w(s)$ proves the convention for endpoints in $H$, which adds no edge.
\end{proof}

For actual attachments $u,v$, join the restored source-to-$u$ path, a directed $u$--$v$ path in $H$, and the restored $v$-to-target path. A bound of $\varphi(v)-\varphi(u)+B$ for the path in $H$ cancels the potentials in Lemma~\ref{lem:tail-estimates}. Thus the attachment dependence cancels before we compare the remaining terms. Every segment belongs to the same orientation. If vertices repeat, including when both endpoints lie in one external component, delete closed subwalks.

We use the parameters $\gamma=(2t-11)_+$ and $\beta=(2t-9)_+$ for both parities.
For integer $t\ge0$,
\begin{equation}\label{eq:gamma-bounds}
t-\gamma\le5,\qquad 2t-\gamma\le11,\qquad 2t-2\gamma\le10.
\end{equation}
These follow from $\gamma=0$ for $t\le5$ and $\gamma=2t-11$ for $t\ge6$. Endpoints in $H$ obey the path bounds below. Their numerical lengths are at most the corresponding root bound $R$, and the required savings below are at most $(2R-7)_+$. Since $F_R-R=(R-1)^2\ge(2R-7)_+$, $A_R\ge F_R$, and $W_R(w)\ge R\ge w$, these lengths satisfy all the stated bounds.

In the definitions of $\delta$ below, all selected lengths are auxiliary root-path lengths, measured before restoration. For an endpoint $s$ in $H$, both directions use the numerical auxiliary length $m=w(s)$ and are classified as zero when $w(s)=0$ and positive when $w(s)>0$.

\noindent\textbf{Odd diameter.} Use $\mathcal H$ on $X$ and reversed $\mathcal C$ on $Y$, selecting inputs and returns as minima in their candidate families after adding all improvements proved above. Lemma~\ref{lem:ambient-savings} gives saving $g=(2t-7)_+\ge\beta\ge\gamma$ in both ordinary directions. Positive $X$ returns and positive $Y$ inputs also satisfy $F_r-g$ and their own actual-arm $W_r$ bounds. For the estimates below we may weaken the saving to $\gamma$. Only the zero-attachment pair $(Q_t,Q_{t+1})$ needs the exceptional path in $H$. Define $A_r-\delta$ as the largest selected auxiliary $Y$ input length actually using $Q_{t+1}$, including $Q_{t+1}$ itself with length zero. Then $\delta\ge g\ge\beta$.

\noindent\textbf{Even diameter.} Apply Lemma~\ref{lem:independent-root-step} on both sides and reverse $Y$. Its saving on $X$ is $(2t-11)_+=\gamma$, and on $Y$ it is $\beta$. Retain ordinary bounds $A_{r-1}-\gamma$ on $X$ and $A_r-\beta$ on $Y$; positive $X$ returns and positive $Y$ inputs additionally satisfy $F_{r-1}-\gamma$ and $F_r-\beta$, with their actual-arm bounds. Use the prescribed parent paths: take the child minimum before restoration and do not minimize again over arbitrary parent paths. This includes the one-distinct-neighbour case and preserves the exact length-one increase. For selected auxiliary $Y$ return lengths $Q_Y$ and auxiliary lengths of zero and positive inputs $P_{Y,0},P_{Y,+}$, define
\[
\delta=\min\{A_r-\max Q_Y,\ A_r-\max P_{Y,0},\ F_r-\max P_{Y,+}\}.
\]
An empty class contributes $A_r$. Every class saving, including the numerical lengths for endpoints in $H$, is an integer at least $\beta$. Thus $\delta\ge\beta$ in both parities.

We now estimate the diameter of the initial orientation. The next lemma also identifies the saving threshold beyond which rebuilding is unnecessary.

\begin{lemma}\label{lem:initial-estimate}
Suppose that the centre $p$, its farthest vertex $q$, and a shortest $p$--$q$ path $Q$ are chosen so that either the quantity $m$ corresponding to a noncritical crossing edge is at most $r+\varepsilon$, or $d$ is odd and some edge has covering radius at most $r$. In the edge-cover case, $G$ has a strong orientation of diameter at most $L+d-1$. Otherwise, form the initial central subgraph from the fixed shortest-path trees and the directed cycles through the reversed critical edge, and use the prescribed rooted orientations and selected paths in its external components. For odd $d$,
\begin{equation}\label{eq:initial-odd}
\diam(\rbjt{G})\le L+\max\{2r+2t+6-2\gamma,\ 3r+2t+1-\gamma-\delta\}.
\end{equation}
For even $d$, every $b_0\le\delta$ gives
\begin{equation}\label{eq:initial-even}
\begin{split}
\diam(\rbjt{G})\le L+\max\{&2t+3-2\gamma,\ 2r+2t+5-\gamma-b_0,\\
&4r+2t+2-2b_0,\ 3r+2t-\gamma-b_0-1\}.
\end{split}
\end{equation}
In either parity, the resulting orientation has diameter at most $L+d+18$ whenever $\delta\ge r+t-8$.
\end{lemma}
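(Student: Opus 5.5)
The edge-cover case follows from Lemma~\ref{lem:edge-cover} with $R=r$. Since $d$ is odd there, $d=2r+1$, $L=2r^2+2r+1$, and the orientation has diameter at most $2r^2+4r+1=L+2r=L+d-1$.

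For the main estimate I would bound $\dist_{\rbjt G}(x,y)$ for an arbitrary ordered pair by concatenating three pieces:
\begin{enumerate}
\item the selected restored return from $x$ to its actual attachment $u$;
\item a directed $u$--$v$ path in $\rbjt H$;
\item the selected restored input from the actual attachment $v$ to $y$.
\end{enumerate}
Shortening then removes closed subwalks. By Lemma~\ref{lem:tail-estimates}, pieces 1 and 3 contribute $m_x+m_y$ plus a constant depending only on the parts of $x$ and $y$, after subtracting $\varphi(u)$ and adding $\varphi(v)$. The middle piece is bounded by $\varphi(v)-\varphi(u)+B$, so the potentials cancel. The cases are the four part combinations $XX$, $XY$, $YX$ and $YY$. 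Within each combination the middle bound $B$ is chosen as follows:
\begin{itemize}
\item $B=d+2h$ from \eqref{eq:core-distance} in general;
\item the improvements of Lemma~\ref{lem:shallow-attachments} when $u$ is a zero $X$ attachment (so $b(u)=r+1$) or $v$ is a zero $Y$ attachment (so $a(v)=c+1$);
\item $B=2h+r-1+\varepsilon$ from Lemma~\ref{lem:axis-choice} for the exceptional pair $(Q_c,Q_{c+1})$.
\end{itemize}
I would also use $h\le\rho=r+t+\varepsilon$.

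The key bookkeeping is to split each tail according to whether its root segment is zero or positive, and to use the matching length bound:
\begin{itemize}
\item $A_R-\text{saving}$ for a zero path;
\item $F_R-\text{saving}$ for a positive return or input. A positive path has $w(s)\ge1$, so the attachment is not on the boundary layer, but its length bound is $F_R=A_R-2R+1$.
\end{itemize}
With $R=r-1+\varepsilon$ on $X$ and $R=r$ on $Y$, the sums $A_{R_X}+A_{R_Y}$ produce $L$ plus linear terms. The term $d$ from the reversed critical edge appears only when the path crosses from $Y$ to $X$. In that case either a positive tail saves about $2R$ through $F_R$, or the zero-attachment improvements of Lemma~\ref{lem:shallow-attachments} remove the $d$. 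For $XY$ pairs no reversed critical edge is needed, so the cost is about $2h$ plus the tail constants.

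The exceptional $Y$ inputs through $Q_{c+1}$, and for even $d$ all $Y$ returns and inputs, are measured by $\delta$. This gives the terms containing $\delta$ or $b_0$; I would replace $\delta$ by any $b_0\le\delta$ using monotonicity. The $X$ saving $\gamma$ enters once or twice, according to how many $X$ tails occur. Maximizing over the cases should give exactly the listed expressions in \eqref{eq:initial-odd} and \eqref{eq:initial-even}.

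\textbf{Main obstacle.} The $YX$ case with zero tails at both ends is the hardest, since both tails can be near $A_R$ and the critical reversal is needed. I would first try to show that a zero $Y$ return has $b(u)$ maximal, which makes $\varphi(u)$ small, and that a zero $X$ input has $b(v)=r+1$. Lemma~\ref{lem:shallow-attachments} then gives $B\le2h+1$ except at $(Q_c,Q_{c+1})$, and that pair is where $\delta$ enters.

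For the last assertion, put $b_0=\delta\ge r+t-8$ and use \eqref{eq:gamma-bounds}. In the odd case the first term is $2r+1+(2t-2\gamma)+5\le d+15$, and the second is at most $2r+9+t-\gamma\le d+13$. In the even case each term reduces similarly to at most $2r+18$. For example, $4r+2t+2-2b_0\le 2r+18$, and $2r+2t+5-\gamma-b_0\le r+t+13-\gamma\le 2r+18$ since $t\le r$.
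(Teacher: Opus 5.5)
Your overall architecture matches the paper's: a restored return, a directed path in $\rbjt{H}$, and a restored input; potentials cancelled by Lemma~\ref{lem:tail-estimates}; middle bounds from \eqref{eq:core-distance}, Lemma~\ref{lem:shallow-attachments} and Lemma~\ref{lem:axis-choice}; then $h\le\rho$. The edge-cover case and the final numerical check are fine, and taking $b_0=\delta$ there is legitimate. But the case computation, which is the whole content of the proof, is left as ``should give exactly the listed expressions''. Worse, the guidance you give for it points at the wrong case.

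For odd $d$, Lemma~\ref{lem:tail-estimates} gives:
\begin{itemize}
\item source terms $r^2-\rho-\gamma$ plus $2$, $d$, $-1$ for positive $X$, zero $X$ and arbitrary $Y$ returns;
\item target terms $r^2+\rho-\gamma$ plus $0$, $2$, $d$ for arbitrary $X$, positive $Y$ and zero $Y$ inputs.
\end{itemize}
Since $L=2r^2+d$, the excess over $L$ is the two additions plus the $H$-term minus $d+2\gamma$. So a $Y$-source/$X$-target pair is the cheapest case: with the general term $d+2h$ its excess is $2h-1-2\gamma$ whatever the tail types (in \eqref{eq:even-path-costs} it is $T+K-b_0-\gamma$). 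The tails costing an extra $d$ are zero $X$ returns and zero $Y$ inputs. Lemma~\ref{lem:shallow-attachments} removes that $d$ for an $X$ source with $b(u)=r+1$ or a $Y$ target with $a(v)=c+1$. The case where both occur is an $X\to Y$ pair, and so is the one pair the lemma misses, $(Q_c,Q_{c+1})$; that is where $\delta$ enters.

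Your claim that $XY$ pairs need no reversed critical edge and cost only about $2h$ is false. An $X$-to-$Y$ path in $\rbjt{H}$ may need two forward crossing edges and $Q_{c+1}\to Q_c$, with correction $m_f+m_g-d+2$ plus $2$ per other reversed edge of $Q$, i.e.\ up to $d+2h$. This is exactly why the small-$m$ crossing edges of Lemma~\ref{lem:shallow-attachments} are needed. The step you plan for the ``$YX$'' case also fails. A zero $Y$ attachment has $a(u)=c+1$, which forces $b(u)\in\{r,r+1-\varepsilon\}$, not $b(u)$ maximal. Moreover, Lemma~\ref{lem:shallow-attachments} gives nothing for $Y$ sources or $X$ targets, so $(Q_c,Q_{c+1})$ cannot arise there.

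Second, you use $2h+1$ and $2h+r-1+\varepsilon$ as the additive term $B$ over $\varphi(v)-\varphi(u)$, but they bound actual directed distances. For double-zero pairs $\varphi(v)-\varphi(u)\in\{d,d+1\}$, with $d+1$ at $(Q_c,Q_{c+1})$, so your version loses about $d$. For odd $d$ the critical term would become $d+3r+2t+2-\gamma-\delta$ instead of $3r+2t+1-\gamma-\delta$. The paper handles double-zero pairs without potentials. Since $w=0$, restored and auxiliary lengths coincide, so the outside pieces total $L-1-2\gamma$ (noncritical) or $L-1-\gamma-\delta$ (critical) for odd $d$, and $L-\gamma-b_0$ for even $d$. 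To these the actual distances $2h+1$, $2h+r$, respectively $2h+r-1$, are added.

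Also, the bound $F_R$ minus the saving holds only for positive $X$ returns and positive $Y$ inputs (positive returns before the $Y$ side is reversed), not for every positive path. This is harmless, since the other types gain nothing in the $H$-term, but it should not be asserted.

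With these corrections the odd maximum runs over $2h+4-2\gamma$, $2h+3-2\gamma$, $2h+2-2\gamma$, $2h-2\gamma$ and $2h+r-1-\gamma-\delta$. The even maximum runs over \eqref{eq:even-path-costs} with $T=2h$, $K=0$ and the improved double-zero entry; there the $Y\to Y$ entries $2r+2+2h-2b_0$ produce the term $4r+2t+2-2b_0$.
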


\begin{proof}
The edge-cover alternative is odd; Lemma~\ref{lem:edge-cover} gives diameter at most $2r^2+4r+1=L+d-1$. For odd $d$, Lemma~\ref{lem:tail-estimates} gives common terms $r^2-\rho-\gamma$ and $r^2+\rho-\gamma$ for the source and target paths, respectively. The additions for positive $X$ returns, zero $X$ returns, and $Y$ returns are $2,d,-1$; those for $X$ inputs, positive $Y$ inputs, and zero $Y$ inputs are $0,2,d$. Since $2r^2+d=L$, Lemmas~\ref{lem:central-core} and~\ref{lem:shallow-attachments} give excesses over $L$ of at most $2h+4-2\gamma$, $2h+3-2\gamma$, $2h+2-2\gamma$, and $2h-2\gamma$, according as neither end, only the source, only the target, or both ends use the indicated zero attachments; the double-zero critical pair is excluded. For noncritical double-zero pairs this uses the actual path length in $H$ $2h+1$ and the sum of the restored outside path lengths $2A_r-2\gamma=L-1-2\gamma$. All these cases are bounded by the first term of \eqref{eq:initial-odd}, since $h\le r+t+1$. The restored outside path lengths for the critical pair sum to at most $L-1-\gamma-\delta$, and Lemma~\ref{lem:axis-choice} bounds the connecting path length in $H$ by $2h+r$. This proves the second term. If $\delta\ge r+t-8$, \eqref{eq:gamma-bounds} bounds these two excesses by $2r+16$ and $2r+14$, both at most $d+18$.

For even $d$, the initial case has $T=2h$ and $K=0$. We retain a variable form of the path calculation for reuse after rebuilding in Section~\ref{sec:rebuilding}. This calculation uses only the selected root-path bounds and the assumed additional terms for paths in $H$. Let $b_0$, possibly negative, be a lower bound for the savings in the three classes of selected $Y$ paths in the orientation being estimated. Thus $b_0\le\delta$ for the present orientation; after rebuilding, the same calculation uses $b_0=\delta'$ for the new orientation. Suppose the general addition from $H$ is at most $d+T+K$, reduced to $T+K$ for a zero $X$ source and to $T+1$ for a zero $Y$ target. Here $T,K\ge0$. Lemma~\ref{lem:tail-estimates} bounds $l-\varphi(s)$ by $F_{r-1}-\gamma-t+1$, $A_{r-1}-\gamma-t+1$, and $A_r-b_0-2r-t+1$ for positive $X$, zero $X$, and ordinary $Y$, respectively. The bounds on $l+\varphi(s)$ for ordinary $X$, positive $Y$, and zero $Y$ are $A_{r-1}-\gamma+t-1$, $F_r-b_0+2r+t$, and $A_r-b_0+2r+t$, respectively. Adding the term from $H$ and subtracting $L$ gives the following table. Rows specify the source side and return type, and columns specify the target side and input type.
\begingroup\small\setlength{\arraycolsep}{3pt}
\begin{equation}\label{eq:even-path-costs}
\begin{array}{c|ccc}
&X&Y\text{ positive}&Y\text{ zero}\\
X\text{ positive}&-2r+3+T+K-2\gamma&5+T+K-\gamma-b_0&5+T-\gamma-b_0\\
X\text{ zero}&-2r+T+K-2\gamma&2+T+K-\gamma-b_0&2r+T+2-\gamma-b_0\\
Y&T+K-b_0-\gamma&2r+2+T+K-2b_0&2r+2+T-2b_0
\end{array}
\end{equation}
\endgroup
For example, the top-left entry is $F_{r-1}+A_{r-1}+d+T+K-2\gamma-L$. In the first two columns, replacing a positive $X$ return by a zero return adds $2r-3$ to the restored path length and subtracts $2r$ from the term contributed by $H$, reducing the entry by three. For a positive $X$ or ordinary $Y$ return, replacing a positive $Y$ input by a zero input reduces the total by $K$. When both paths have zero attachments, the entry uses $T+1$ directly for the term contributed by $H$.

For the initial orientation, use $T=2h$ and $K=0$ in \eqref{eq:even-path-costs}. The double-zero entry improves to $T+r-1-\gamma-b_0$: the restored outside path lengths sum to at most $A_{r-1}+A_r-\gamma-b_0=L-\gamma-b_0$, and Lemmas~\ref{lem:shallow-attachments} and~\ref{lem:axis-choice} bound both relevant actual connecting path lengths in $H$ by $T+r-1$, as $r\ge2$. The resulting maximum is
\[
\begin{split}
\max\{-2r+3+2h-2\gamma,\ 5+2h-\gamma-b_0,\ 2r+2+2h-2b_0,\ 2h+r-1-\gamma-b_0\}.
\end{split}
\]
Now $2h\le2r+2t$ proves \eqref{eq:initial-even}. If $\delta\ge r+t-8$, take $b_0=r+t-8$, which is allowed even when negative. By \eqref{eq:gamma-bounds}, the four terms are at most $13$, $r+18$, $2r+18$, and $2r+12$, respectively, and each is at most $d+18$.
\end{proof}

\section{Rebuilding the central subgraph}\label{sec:rebuilding}

Only the case of insufficient saving remains. Lemma~\ref{lem:farthest-extraction} uses a long selected path to find a new farthest vertex $w$ of the same centre $p$, together with a connected vertex set $U$ and an attachment $\xi$. Its lower bound concerns paths whose internal vertices lie in $U$. Lemma~\ref{lem:rebuilt-core} uses this property to rebuild $H$ along a shortest $p$--$w$ path through $\xi$. The directed cycles are formed as in Section~\ref{sec:central-core}, but the retained paths on the $Y$ side may be longer than shortest paths. Lemma~\ref{lem:rebuilt-distances} bounds the connecting paths in this fixed orientation.

The old saving $\delta$ controls the bounds for the rebuilt subgraph $H$, whereas the new saving $\delta'$ controls the lengths of its outside paths. Lemma~\ref{lem:rebuilt-estimate} combines these bounds and proves the required diameter whenever $\delta'\ge\delta$. This is why we choose a minimum saving among all choices obtainable by finitely many permitted rebuildings: the next rebuilding remains among those choices and therefore cannot have a smaller saving. The precise choices and path-selection rules are specified in the final proof.

By Lemma~\ref{lem:initial-estimate}, the only remaining range for the initial saving is
\begin{equation}\label{eq:deficit-range}
\beta\le\delta\le r+t-9\le2r-9.
\end{equation}
The same range will be used in every subsequent extraction. Define $\ell=r-2+\varepsilon$ and $K=s=\lfloor\delta/2\rfloor+2$. Then $2\le s\le r-3\le\ell-1<\ell$, $\delta\le2s-3$, and $\delta\le2\ell-5$.
An empty range requires no extraction; a nonempty range has $r+t\ge9$, hence $r\ge5$.

\begin{lemma}\label{lem:farthest-extraction}
Fix a centre $p$, a farthest vertex, a shortest path $Q$ between them, and a central subgraph $H$ containing every crossing edge. Orient $H$ and the auxiliary graphs as prescribed above, add all permitted improvements to the candidate path families, and select the paths and their actual attachments as in Section~\ref{sec:joining-paths}. Let $\delta$ be their saving. Suppose \eqref{eq:deficit-range} holds. A vertex attaining the selected maximum in a path class whose saving is $\delta$ lies in a connected set $U$ of real vertices with an attachment $\xi$ such that $a(\xi)=t+2$. Every vertex $v$ of $U$ and every attachment of $U$ has a real tree path from $\xi$ of length $a(v)-t-2\le\ell$. There is one vertex $w\in U$ with $a(w)=\rho$ such that every path from $w$ to an attachment other than $\xi$, with internal vertices in $U$, has length at least $2\ell-K$. A shortest $\xi$--$w$ path has length $\ell$ and all internal vertices in $U$. In particular $0\le K<\ell$.
\end{lemma}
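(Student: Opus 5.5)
The plan is to work inside the auxiliary rooted graph of the side that attains the saving $\delta$, and to descend through the recursion of Lemma~\ref{lem:rooted-orientations} and Lemma~\ref{lem:independent-root-step} along the selected maximal path. Let $x$ be a vertex attaining the selected maximum in a class with saving $\delta$. By \eqref{eq:deficit-range} its auxiliary length exceeds $A_R-(2R-9)$ (or the $F_R$ analogue), where $R$ is the relevant root bound ($r$ on $Y$, $r-1+\varepsilon$ on $X$). The pointwise bound $C_R(z)-\bigl(2(\rho+z-D)-5\bigr)_+$ of Lemma~\ref{lem:ambient-savings} then forces the depth $z$ of $x$ to be essentially maximal, so $a(x)$ (on $Y$) or $b(x)$ (on $X$) equals the extreme value. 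In the $Y$ case this gives $a(x)=\rho$. The $X$ case is handled by the symmetric argument after reversal, reinterpreting $q$ via the rebuilding convention; for brevity I describe the $Y$ case, where $a$ is the true depth function.

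Next I would peel off two recursive levels. The selected path of $x$ is restored through child graphs. In the even case Lemma~\ref{lem:independent-root-step} gives the exact comparison of parent minimum with child minimum plus the restoration cost. In the odd case the candidate-list convention gives the same comparison. If at some level the restoration cost were smaller than the maximal $2R$, or the child path had a saving, the parent saving would exceed $\delta$. Hence every level must be tight, and the top levels consist of depth-one and then depth-two attachments along tree routes. I take $U$ to be the external component at the second level, that is, the component of $J-V(A)$ in the depth-two child, containing $x$. I take $\xi$ to be its recorded root attachment of depth two, so $a(\xi)=t+2$ by Lemma~\ref{lem:auxiliary-attachments}. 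The first rule preserves depths exactly, so every vertex $v$ of $U$ and every attachment of $U$ has a real tree route from $\xi$ of length equal to the depth drop $a(v)-t-2$. Since $a\le\rho$, this length is at most $\ell$. I put $w=x$, or a vertex of maximal depth on its tree branch with $a(w)=\rho$. Then the tree route from $\xi$ is a shortest $\xi$--$w$ path of length $\ell$ with internal vertices in $U$, because the depth increases by one at each step.

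The main obstacle is the lower bound $2\ell-K$ on paths from $w$ to attachments other than $\xi$. Suppose a path with internal vertices in $U$ joins $w$ to another attachment $e$ and has length $l<2\ell-K$. If $e$ is a positive attachment, Lemma~\ref{lem:positive-attachment-path} gives a return of length at most $I_{R'}(z)+2+2(l-z)_+$. Here $R'=\ell$ is the bound at that level and $z=\ell$, and $I_\ell(\ell)=F_\ell=A_\ell-2\ell+1$. The return therefore has length at most $A_\ell-4\ell+3+2l$. Restoring through the two tight levels, this is at most the maximal length minus $(4\ell-3-2l)$. The saving would then exceed $2K-3\ge\delta$ after the parity bookkeeping with $K=\lfloor\delta/2\rfloor+2$, contradicting the choice of $x$. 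If $e$ is a zero attachment, it would give a different depth-one or depth-two route. That route is either shorter in $a$ or competes with $\xi$, and I would rule it out using the tightness of the restoration costs from the previous paragraph together with Lemma~\ref{lem:positive-root-path}. I expect matching the exact constants here to be the delicate step, especially on the even side, where $F_r$ versus $A_r$ classes and the one-neighbour case of Lemma~\ref{lem:independent-root-step} must be treated separately. Finally, $0\le K<\ell$ is already recorded after \eqref{eq:deficit-range}.
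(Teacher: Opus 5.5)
There is a genuine gap. Your argument breaks at the choice of $w$, and so at the step that was supposed to give the $2\ell-K$ bound.

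\textbf{The depth of $x$ is not forced to be maximal.} The selected $Y$ paths come from $\mathcal C$: reversed for odd $d$, and as the children in Lemma~\ref{lem:independent-root-step} for even $d$. For $\mathcal C$, Lemma~\ref{lem:ambient-savings} gives no pointwise improvement over $C_R(z)$. Already $C_R(R-1)=A_R-2$, whereas \eqref{eq:deficit-range} allows $\delta$ up to $2r-9$. So any depth with $(R-z)(R-z+1)\le\delta$ is compatible with saving $\delta$, and $a(x)=\rho$ fails in general.

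Without $w=x$ your contradiction disappears. Lemma~\ref{lem:positive-attachment-path} applied at $w$ bounds the return of $w$, but only the return of $x$ is known to be long. An arbitrary deepest vertex of the branch need not satisfy the $2\ell-K$ bound. Your claim that ``every level must be tight'' is also unavailable. Restoration only gives the one-sided comparison that a child minimum is at least the parent minimum minus $2R$, so the saving can be spread over several levels.

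\textbf{The missing idea is the paper's descent.}

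First, identify $U$ as a component of the other red vertices (a red branch containing no arm) in the $\mathcal C$ call of eccentricity $\ell+1$. This is the top $Y$ call for odd $d$ and the $\mathcal C$ child for even $d$. The identification comes from showing that the selected length exceeds $F_R$, $2R$ and $A_{R-1}$.
\begin{itemize}
\item This is what makes the tree paths from $\xi$ real.
\item It also makes $\xi$ the unique zero attachment: $\xi$ is the only depth-one vertex of that branch, and it is actual because $Q(y)>F_\ell$ in the $\mathcal H$ call on $U$. This replaces your vague treatment of zero attachments.
\end{itemize}

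Next, keep the long-return vertex $y$ fixed and descend through children built by the second rule. While $R\ge s=K$, the bound $Q(y)\ge A_R-\delta$ exceeds $F_R$, $A_{R-1}$ and $2R$. Hence $y$ stays in a component of the other red vertices, the child eccentricity is exactly $R-1$, and $Q(y)\ge A_{R-1}-\delta$ persists. At eccentricity $s$, pick $w$ of maximal depth in the child. It has depth $R$ in every ancestor, so $a(w)=\rho$.

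The bound $2R-K$ on paths from $w$ to positive attachments is then proved by induction upwards.
\begin{itemize}
\item An exit of depth at least two uses the induction hypothesis, plus two edges from label nonadjacency.
\item An exit through the depth-one vertex $\tau$ is rerouted along the real tree path from $\tau$ to $y$. Lemma~\ref{lem:positive-attachment-path} is applied at $y$, not at $w$, giving $A_M-\delta\le F_M+2l$.
\end{itemize}

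Finally, the $\xi$--$w$ geodesic does not have its internal vertices in $U$ merely because depth increases along it. It does because a last entry into $U$ through another attachment would leave a path of length at most $\ell<2\ell-K$. The $X$ side you discuss never arises, since $\delta$ is defined only through $Y$ classes.
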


\begin{proof}
The proof first brings the long selected path, in either parity, to a \(\mathcal C\) call of actual eccentricity \(\ell+1\). Its selected vertex \(y\) will lie in an external component \(U\) of the other red vertices and have minimum return at least \(A_{\ell+1}-\delta\). We then descend to choose a single vertex \(w\), lift distance bounds for all positive attachments to this same vertex, and verify that it is a farthest vertex of \(p\).

For odd $d$, choose $y$ attaining $A_r-\delta$ among the selected $Y$ inputs through $Q_{t+1}$, and reverse the auxiliary orientation. It becomes a minimum return in $\mathcal C$. By \eqref{eq:deficit-range}, its length exceeds $F_r$, $2r$, and $A_{r-1}$: the differences from $F_r$ and $A_{r-1}$ are at least eight and nine, while $A_r-\delta-2r\ge r^2-3r+9>0$. These exclude the smaller-bound classes, vertices of the subgraph $A$ in that recursive call, and smaller actual eccentricity. Thus $y$ lies in an external component of the other red vertices of a $\mathcal C$ call of eccentricity $r=\ell+1$.

For even $d$, a class attaining $\delta$ is nonempty, since an empty class contributes $A_r>2r-9$. For an ordinary $Y$ return or zero $Y$ input, the selected length $A_r-\delta$ again exceeds $F_r,2r,A_{r-1}$. Before reversal it is, respectively, an input to a blue ordinary vertex or a return from one of the other red vertices. Lemma~\ref{lem:independent-root-step} restores it from a minimum $\mathcal C$ child return with increase at most $2r$, so the child minimum is at least $A_{r-1}-\delta$. For a positive $Y$ input, the length is $F_r-\delta>r$, since $F_r-\delta-r\ge(r-2)^2+6$, excluding prescribed arm endpoints, which lie in the subgraph $A$ of that recursive call. Its actual arm has length one, since length at least two would give $W_r(2)=F_r-2r+3<F_r-\delta$. The prescribed restoration, including the one-distinct-neighbour case, is exactly the child minimum plus one. Thus that minimum equals $F_r-\delta-1=A_{r-1}-\delta$. Smaller actual parent eccentricity is excluded by $F_{r-1}<F_r-\delta$, since $F_r-F_{r-1}=2r-2>\delta$. In all three cases the child has eccentricity at most $r-1$ and ordinary depths exactly one less than the parent. The lower bound exceeds $F_{r-1}$ and $A_{r-2}$, whose differences from $A_{r-1}$ are $2r-3$ and $2r-2$, both greater than $\delta$. It also exceeds $2(r-1)$, because $A_{r-1}-(2r-9)-2(r-1)=(r-2)(r-3)+5>0$. Hence its eccentricity is $r-1=\ell+1$, and $y$ is in an external component $U$ of the other red vertices.

By Lemmas~\ref{lem:auxiliary-attachments} and~\ref{lem:rooted-orientations}, the ordinary coordinate in this $\mathcal C$ call is $a(v)-t-1$. In the one-distinct-neighbour case this follows by deleting the parent root, whose neighbour has coordinate $a=t+1$. Let $\xi$ be the first vertex of the branch containing $U$. Its depth is one, so $a(\xi)=t+2$. All vertices of $U$ and all its attachments have this red label not belonging to an arm; their branch paths from $\xi$ are real and have length $a(v)-t-2\le\ell$.

Identify attachments of parent depth one with the child root, add two arms of length $j-1$ at each attachment of parent depth $j\ge2$, and apply $\mathcal H$. The exact ordinary coordinate is now $a(v)-t-2$, and eccentricity is at most $\ell$. If $Q(y)$ is the minimum candidate return length, restoration gives a parent candidate of length at most $Q(y)+2(\ell+1)$, so $Q(y)\ge A_{\ell+1}-\delta-2(\ell+1)=A_\ell-\delta$.
Since $\delta<2\ell$, smaller eccentricity would contradict $A_{\ell-1}=A_\ell-2\ell$; the actual eccentricity is $\ell$. Also $Q(y)>F_\ell$, since $A_\ell-\delta-F_\ell=2\ell-1-\delta\ge4$, so no root geodesic to $y$ begins with a complete arm: its tree path would then begin with such an arm and give a return of length at most $I_\ell(z)\le F_\ell$. Every root geodesic therefore starts with an ordinary root edge, which in this child can only record $\xi$. Thus $\xi$ is an actual attachment and the unique zero attachment of $U$; all others have positive arms.

\noindent\textit{Descent and the choice of $w$.} Keep the selected vertex $y$ fixed throughout the descent; $Q(y)$ denotes its minimum candidate return length in the current call. Maintain actual eccentricity $R$ and minimum return at least $A_R-\delta$ at $y$. When $R\ge s$, $\delta\le2R-3$, so the lower bound exceeds $F_R,A_{R-1},2R$; the last follows from $A_R-\delta-2R\ge(R-1)(R-2)+1>0$. Thus $y$ lies in an external component of the other red vertices. If the root separates the graph, restrict to the component containing $y$; the same lower bound forces its eccentricity to remain $R$. The one-distinct-neighbour case has an $I_R$ bound and is excluded. The child with added arms of length one has eccentricity at most $R-1$. If it were at most $R-2$, its minimum would restore within $A_{R-2}+2R=A_R-2R+2<A_R-\delta$, a contradiction. Hence its actual eccentricity is $R-1$, and its minimum return at $y$ is at least $A_{R-1}-\delta$.

At the call of eccentricity $s$, take its child with added arms of length one, of eccentricity $s-1$, and choose any vertex $w$ of depth $s-1$. Keep this $w$ fixed in every ancestor and in the distance induction below. New arms have length one and no internal vertices; earlier artificial vertices lie in the subgraph $A$ of that recursive call and cannot lie in a component of the other red vertices. Hence $w$ is real or a real attachment. The child-depth inequality makes its parent depth at least $s$; the parent eccentricity makes it at most $s$, so equality holds. Repeating this argument upwards shows that $w$ has depth $R$ in every ancestor of eccentricity $R$.

\noindent\textit{A common distance bound for all positive attachments.} For this same $w$, we prove the following statement by induction up the recursive calls: every path from $w$ to a positive attachment in a call of root eccentricity $R$ has length at least $2R-K$, provided it uses only the real edges retained in that call and avoids its auxiliary root and artificial arm interiors.

For the base call $R=s$, let $V$ be the last external component of the other red vertices and $\tau$ its first branch vertex. Fix a positive parent attachment $e$ and a $w$--$e$ path using only the retained real edges and avoiding the auxiliary root and artificial arm interiors. If $w\in V$, read its first exit attachment $z$; if $w$ is itself an attachment of $V$, take $z=w$. For an exit $z$ of parent depth at least two, its child arm has length one, so the initial segment has length at least $s-2$. Its red label does not belong to an arm and is distinct from and nonadjacent to the label of the main arm ending at $e$, so the remainder has length at least two. The total is at least $s=2s-K$. The exit $\tau$ of parent depth one gives an initial segment of length at least $s-1$ and the same two-edge remainder, hence a larger total. If $w=z$ is an attachment, its child depth is at most one, so $s=2$; the zero initial segment still has length $s-2$.

Now pass from a child of eccentricity $M-1$ to its parent of eccentricity $M$, with a component $V$ of the other red vertices and first branch vertex $\tau$ of depth one. Since $M>s$, the vertex $w$ lies in $V$: its child depth is $M-1\ge s\ge2$, while every child attachment has depth at most one. Consider a path from $w$ to a positive parent attachment $e$ using only the real edges retained in the parent and avoiding its auxiliary root and artificial arm interiors. If its first exit from $V$ has parent depth at least two, this exit is a positive attachment in the child. The initial segment uses retained real child edges and avoids the child root and artificial arm interiors, so the induction hypothesis bounds it by $2(M-1)-K$; label nonadjacency adds at least two, giving $2M-K$. If its first exit is $\tau$, this exit is identified with the child root, so we instead use the long minimum return at the fixed vertex $y$. The depth $M$ of $w$ gives an initial segment of length at least $M-1$. Let the remaining length be $l\ge2$, and let $z$ be the parent depth of $y$. Join the remaining path to the real $\tau$--$y$ tree path of length $z-1$, and delete closed portions. This gives an $e$--$y$ path satisfying the hypotheses of Lemma~\ref{lem:positive-attachment-path}, of length at most $l+z-1$. Lemma~\ref{lem:positive-attachment-path} yields $A_M-\delta\le Q(y)\le I_M(z)+2l\le F_M+2l$. Consequently $l\ge M-\lfloor(\delta+1)/2\rfloor$, and the entire path has length at least $2M-\lfloor(\delta+3)/2\rfloor\ge2M-(\lfloor\delta/2\rfloor+2)=2M-K$. After a restriction at a separating root, there is no path from $w$ to a positive attachment in another root component using only the retained real edges and avoiding the root and artificial arm interiors. This completes the distance induction for the same vertex $w$.

\noindent\textit{Real vertices and the internal geodesic.} In the first auxiliary graph formed from \(U\), $\xi$ is identified with the root and every other original attachment is an endpoint of a positive arm whose tree branch has the label of its main arm. An external component of the other red vertices is adjacent neither to a branch whose tree path begins with an arm nor to the root, whose neighbours lie in the subgraph $A$ of that recursive call. Its vertices and all its real attachments therefore lie strictly inside $U$. Later calls introduce no real vertices and retain this property; the label-tree paths after their first root edge use real edges. Because $s<\ell$, the final call occurs after this first descent. Thus the selected $w$, including the possible attachment at $s=2$, belongs to $U$.

Its depth $\ell$ in the first auxiliary graph formed from \(U\) gives $a(w)=t+2+\ell=\rho$. Every path from $w$ to an original attachment $e\ne\xi$ whose internal vertices lie in $U$ uses only the real edges retained in that first auxiliary graph and avoids its root and artificial arm interiors. Such a path cannot pass through $\xi$, since its internal vertices lie in $U$. Its length is therefore at least $2\ell-K>\ell$. The real $\xi$--$w$ path in the branch tree of the $\mathcal C$ call of eccentricity $\ell+1$ has length $\ell=a(w)-a(\xi)$, so it is a geodesic in $G$. If it had an internal vertex outside $U$, its last entry into $U$ would pass through an attachment $e\ne\xi$ and leave an $e$--$w$ path with all internal vertices in $U$, of length at most $\ell$, a contradiction.
\end{proof}

We now use the vertex $w$, the set $U$, and its attachment $\xi$ supplied by Lemma~\ref{lem:farthest-extraction}. The new central subgraph is again a union of cycles through the reversed critical edge. The change is in the tree on the $Y$ side: its paths first approach $\xi$ and then join the new shortest path $Q$. We control both their excess over the true distance to $w$ and the vertices of $Q$ at which they first arrive.

\begin{lemma}\label{lem:rebuilt-core}
Let $p$ be a centre, and let $U$ be a connected vertex set with attachment $\xi$. Suppose $a(\xi)=P=t+2$, $w\in U$ satisfies $a(w)=\rho$, and $\ell=\dist(\xi,w)=r-2+\varepsilon$. Assume that a $\xi$--$w$ geodesic has all its internal vertices in $U$, and that every vertex $v$ of $U$ or its attachment set has a $\xi$--$v$ path of length $a(v)-P\le\ell$. Suppose also that every path from $w$ to an attachment other than $\xi$, with internal vertices in $U$, has length at least $2\ell-K$, where $K\ge0$.

Choose the new shortest path $Q$ by concatenating a $p$--$\xi$ geodesic and the prescribed $\xi$--$w$ geodesic. Redefine $b(v)=\dist(w,v)$ and $X,Y$ using the same threshold $\theta$. There is a connected bridgeless subgraph $H$ containing all crossing edges, with a strong orientation obtained by directing each noncritical crossing-edge path forwards and its intervening segment of $Q$ backwards. Its retained $X$ paths lie in one shortest-path tree rooted at $p$. Its retained $Y$ paths lie in one tree directed towards $w$ before the edges of $Q$ are reversed. For a retained $Y$ vertex $v$, let $D(v)$ be its distance to $w$ in this tree, let $\eta(v)=D(v)-b(v)$, and let $Q_{\nu(v)}$ be the first vertex of $Q$ on its tree path. Then
\begin{equation}\label{eq:rebuilt-tree-bounds}
0\le\eta(v)\le K,\qquad 2\nu(v)+\eta(v)\le T,\qquad T=2P+K.
\end{equation}
\end{lemma}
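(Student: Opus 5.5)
Mirror the construction of Lemma~\ref{lem:central-core}, changing only the tree on the $Y$ side. The new path $Q$ is the concatenation of a $p$--$\xi$ geodesic and the prescribed $\xi$--$w$ geodesic. It is a geodesic of length $P+\ell=\rho$, so $w$ is again a farthest vertex of the centre $p$. Hence all partition inequalities of Section~\ref{sec:central-core} hold with $q$ replaced by $w$. In particular $Q_P=\xi$, and $Q_0,\dots,Q_c$ lie in $X$ while the rest of $Q$ lies in $Y$; here $c+1=t+\varepsilon\le P$.

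On the $X$ side I keep a shortest-path tree rooted at $p$ containing $Q$. Along it $a-b$ is nondecreasing, so the argument of Lemma~\ref{lem:central-core} again keeps every tree path to an $X$ vertex inside $X$.

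On the $Y$ side I build a spanning structure directed towards $w$ in two parts. Every vertex of $U$ and its attachments that lies in $Y$ follows its prescribed real $\xi$--$v$ path, of length $a(v)-P$, backwards to $\xi$, and then follows $Q$ from $\xi$ to $w$. To make this a tree I take these prescribed paths from one tree rooted at $\xi$, as in Lemma~\ref{lem:farthest-extraction}, and attach it to $Q$ at its vertices beyond $\xi$. Every other $Y$ vertex follows a shortest path to $w$ through a shortest-path tree containing $Q$. I must check two things here. First, these paths stay in $Y$: along a prescribed path $a$ increases by one per step and $b$ changes by at most one, and reversing it towards $\xi$ and beyond is handled by the bound $a(y)\ge c+1$ on $Y$ together with $a(\xi)=P\ge c+1$. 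Second, the union is acyclic and uses each edge in one direction. With these checks, the cycle construction, the compatibility of directions, strong connectivity, bridgelessness, containment of all crossing edges, and the fact that outside components lie in one part follow verbatim from Lemma~\ref{lem:central-core}.

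For \eqref{eq:rebuilt-tree-bounds}, $\eta\ge0$ is trivial. For a vertex outside the $U$-structure, $\eta=0$, and its first vertex on $Q$ satisfies $2\nu\le 2P$ unless the tree path meets $Q$ beyond $\xi$. I choose the shortest-path tree to enter $Q$ as early as possible, so that the first $Q$ vertex is at index at most $P$; at worst it is some $Q_i$ with $i\le P$ after using geodesic structure, which I would verify from $a(v)\ge c+1$. For $v$ in $U$ or an attachment, the tree distance is $D(v)=(a(v)-P)+\ell$ and $\nu(v)=P$. The key inequality is then $\eta(v)=a(v)-P+\ell-b(v)\le K$. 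I would prove it with the hypothesis on paths from $w$: a shortest $v$--$w$ path either passes through $\xi$, giving $b(v)\ge\ell+\dist(\xi,v)$ with $\dist(\xi,v)\ge a(v)-P$, so $\eta=0$; or it reaches $w$ through an attachment $e\ne\xi$ with interior in $U$. In the latter case the segment from $e$ to $w$ has length at least $2\ell-K$, so $b(v)\ge 2\ell-K$, and $a(v)-P\le\ell$ gives $\eta\le K$. This yields $2\nu+\eta\le 2P+K=T$.

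The main obstacle is making the $Y$ structure a genuine tree that stays in $Y$ while using the non-geodesic prescribed paths. Vertices reachable both by a prescribed path and by a shorter geodesic route must be assigned consistently, and that assignment must not break the bound on $\nu$. I would resolve this by assigning parents in order of $a$-layers inside $U\cup N(U)$ first, and only then completing the tree by geodesic parents. If needed, I would relax $\nu(v)$ to the actual first $Q$ vertex and use $\nu\le P$ for $U$-routed vertices.
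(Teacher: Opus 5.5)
There is a genuine gap, and it sits at the crossing endpoints, which are the vertices the bound is really about. Every crossing endpoint $y\in Y$ lies outside your $U$-structure. To see this, write $B(v)=\ell+\dist(\xi,v)\ge b(v)$. Every vertex $u$ of $U$, of its attachment set, or of a prescribed path satisfies $a(u)-b(u)\ge a(u)-B(u)=P-\ell=\theta+6-3\varepsilon$, so all its neighbours lie in $Y$. For such $y$ you claim $\eta(y)=0$ and $\nu(y)\le P$ by letting the $w$-tree ``enter $Q$ as early as possible''. This cannot work:
\begin{itemize}
\item All $w$-geodesics from $y$ may enter $U$ through an attachment $e\ne\xi$ and reach $Q$ only at $w$. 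Then $2\nu(y)=2\rho=2P+2\ell>T$.
\item In a consistent tree, such a route must switch to the $\xi$-route at its first vertex $z$ of the structure. Here $z\notin U$, because the route reaches an attachment before $U$. So $\eta(y)\ne0$ in general.
\end{itemize}
What is missing is the trade-off. Suppose the $\xi$-tree path from $z$ first meets $Q$ at $Q_{P+g}$. Then $D(y)=q_0+B(z)-|g|-g$ and $b(y)=q_0+b(z)$, hence $\eta(y)+2\max\{g,0\}=B(z)-b(z)$. So you need $B(z)-b(z)\le K$ for every $z\notin U$. This is the one place where the hypothesis on paths from $w$ is used. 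A $w$--$z$ geodesic first leaves $U$ at some attachment. If that attachment is $\xi$, then $B(z)\le b(z)$. Otherwise the prefix has length at least $2\ell-K$, while that attachment is within $\ell$ of $\xi$, so rerouting through $\xi$ costs at most $K$. Routes that never meet the structure have $\nu\le P$ simply because $Q_{P+1},\dots,Q_\rho\in U$, not because of any early-entry choice. Your formulas $\nu(v)=P$ and $D(v)=a(v)-P+\ell$ also overlook that the $\xi$-tree paths may run along $Q$ beyond $\xi$. This is exactly why the lemma bounds the combination $2\nu+\eta$ rather than each term separately.

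Your case analysis for $\eta\le K$ is valid only at attachments. For $v\in U$, a shortest $v$--$w$ path can stay inside $U$, so neither of your cases applies, and the claim is false there. For example, take $v\in U$ adjacent to $w$ with $a(v)=\rho$, reached from $\xi$ by a branch leaving $Q$ at $\xi$. Then $b(v)=1$ but its tree distance is $2\ell$. The lemma needs the bound only at retained vertices, and for these you need a monotonicity step you omit. Before $Q$ is reached, each tree step lowers $D$ by one and $b$ by at most one. So $\eta$ is nonincreasing and $\nu$ is constant along the tree suffix of a crossing endpoint, and the endpoint bound suffices.

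The rerouting set must also be closed under $\xi$-tree parents. $U\cup N(U)$ need not be, since the prescribed paths can leave it; your plan of assigning parents ``inside $U\cup N(U)$ first'' does not control their intermediate vertices. The paper uses $Z=\{v:a(v)-B(v)>\theta\}$. This set contains $U$ and its attachments, lies in $Y$, and is closed because $a-B$ does not decrease towards $\xi$.

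Finally, your stay-in-$Y$ argument via $a\ge c+1$ is not sufficient, since that inequality does not characterise $Y$. The correct reasons are $a-b\ge a-B>\theta$ on $Z$, together with the monotonicity of $a-b$ along $w$-tree edges outside $Z$.
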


Here $\eta(v)$ is the excess of the tree-path length over the true distance to $w$, while $\nu(v)$ determines the length of a backwards segment of $Q$. The combined bound in \eqref{eq:rebuilt-tree-bounds} controls the extra tree length together with the backwards segment of $Q$.

\begin{proof}
The chosen concatenation has length $P+\ell=\rho=a(w)$, so it is a $p$--$w$ geodesic, with $\xi=Q_P$. As in Section~\ref{sec:central-core}, its unique crossing edge is $Q_cQ_{c+1}$. We first construct the new tree on $Y$, verify \eqref{eq:rebuilt-tree-bounds} for the paths that will be retained, and then form $H$ from these paths and a shortest-path tree on $X$.

Define $B(v)=\ell+\dist(\xi,v)$. The triangle inequality gives $B(v)\ge b(v)$. For $v\notin U$, a $w$--$v$ geodesic first leaves $U$ at an attachment $z$. If $z=\xi$, it has a $w$--$\xi$ prefix of length $\ell$, so $B(v)\le b(v)$. Otherwise the prefix has length at least $2\ell-K$. By hypothesis, $\dist(\xi,z)\le\ell$, so replacing this prefix by a route through $\xi$ adds at most $K$ edges. Thus $0\le B(v)-b(v)\le K$ for every $v\notin U$.

For $v\in U$ or an attachment of $U$, the prescribed $\xi$--$v$ path gives $B(v)\le\ell+a(v)-P$. Therefore $a(v)-b(v)\ge a(v)-B(v)\ge P-\ell=\theta+6-3\varepsilon$. Since $a-b$ changes by at most two across an edge, each such vertex and all its neighbours lie in $Y$. In particular, every crossing endpoint is outside $U$.

Let $Z=\{v:a(v)-B(v)>\theta\}$. Since $B\ge b$, we have $Z\subseteq Y$. The preceding inequality shows that $U$ and all its attachments lie in $Z$. The $Y$ segment of $Q$ also lies in $Z$: at $Q_j$, the value of $a-B$ is $2j-\rho$ for $j\le P$, and $P-\ell$ for $j\ge P$. Choose a shortest-path tree rooted at $\xi$ containing both segments of $Q$ from $\xi$ to its ends, and a shortest-path tree rooted at $w$ containing $Q$.

The next vertex on the route from each vertex of $Y\setminus\{w\}$ is prescribed as follows. A vertex of $Y\setminus Z$ follows its $w$-tree edge until the route first enters $Z$. Along these edges, $b$ decreases by one and $a$ decreases by at most one, so $a-b$ does not decrease and the route stays in $Y$. A vertex of $Z$ outside $Q$ follows its $\xi$-tree edge until the route first meets $Q$. Along these edges, $B$ decreases by one and $a$ decreases by at most one, so $a-B$ does not decrease and the route stays in $Z$. Once on $Q$, the route follows $Q$ towards $w$. Every route reaches $w$: before entering $Z$, its distance to $w$ decreases; before meeting $Q$, its distance to $\xi$ decreases; afterwards its index on $Q$ increases. Because the next vertex depends only on the current vertex, routes have the same continuation whenever they meet. Consequently the edges to these next vertices form one tree on $Y$, directed towards $w$.

Fix a crossing endpoint $y\in Y$. Let $z$ be the first vertex of $Z$ on its route, reached after $q_0$ edges. The initial segment belongs to the $w$-tree, so $b(y)=q_0+b(z)$. Moreover, $z\notin U$: the route starts outside $U$, and any first entry into $U$ is preceded by an attachment, which is already in $Z$. Suppose the $\xi$-tree path from $z$ first meets $Q$ at $Q_{P+g}$. Since this tree contains $Q$, its segment to $Q_{P+g}$ has length $\dist(\xi,z)-|g|$, and the remaining segment of $Q$ has length $\ell-g$. It follows that $D(y)=q_0+B(z)-|g|-g$, and hence $\eta(y)+2\max\{g,0\}=B(z)-b(z)\le K$. The tree path is a path in $G$, so $\eta(y)\ge0$. Since $\nu(y)=P+g$, these inequalities prove \eqref{eq:rebuilt-tree-bounds} at $y$.

Before the first vertex of $Q$, the index $\nu$ of that vertex is constant. If $v'$ is the next vertex after $v$, we have $D(v')=D(v)-1$ and $b(v')\ge b(v)-1$, so $\eta(v')\le\eta(v)$. Therefore \eqref{eq:rebuilt-tree-bounds} holds at every vertex outside $Q$ on the tree suffix of a crossing endpoint.

We now form $H$. Choose a shortest-path tree rooted at $p$ containing $Q$. For each noncritical crossing edge $xy$, with $x\in X$ and $y\in Y$, let $Q_\lambda$ be the last vertex of $Q$ on the $p$--$x$ tree path, and let $Q_\nu$ be the first vertex of $Q$ on the new $y$--$w$ tree path. The prefix to $x$ stays in $X$, whereas the suffix from $y$ stays in $Y$. Thus $\lambda\le c<\nu$, and the path from $Q_\lambda$ through $xy$ to $Q_\nu$ has all its internal vertices outside $Q$. Direct this path forwards and the intervening segment of $Q$ backwards. This produces a directed cycle through $Q_{c+1}\to Q_c$.

These cycle directions agree: outside $Q$ they follow the same two trees, every noncritical crossing edge is directed from $X$ to $Y$, and the edges of $Q$ are directed backwards. The construction needs the $Y$ paths to stay in $Y$ and to have a common continuation when they meet; it does not require them to be geodesics. There is at least one noncritical crossing edge, since otherwise the critical edge would be a bridge of $G$. Let $H$ be the union of all the resulting cycles. They share the reversed critical edge, so this union is strongly connected; every edge lies on a cycle, so its underlying graph is bridgeless. It contains all noncritical crossing edges and also the critical edge.

Finally, $Q\cap H$ runs from the smallest index $\lambda$ to the largest index $\nu$ arising from a crossing edge. At each vertex of $Q\cap H\cap Y$, its tree path follows $Q$ to $w$, so $\eta=0$ and $\nu$ is its own index. Every such index is no greater than $\nu$ for some crossing endpoint. Applying \eqref{eq:rebuilt-tree-bounds} at that endpoint proves \eqref{eq:rebuilt-tree-bounds} on $Q\cap H$ as well.
\end{proof}

The new $H$ and its orientation are now fixed. We next estimate directed paths using the two retained trees and $Q$ with its edges reversed, rather than choosing new paths separately for different vertex pairs. Continue to use the potential $\varphi$, with $b(v)=\dist(w,v)$. The conditions $b(u)=r+1$ and $a(v)=c+1$ therefore use the true distances in $G$, not the possibly larger distance $D(v)$ in the retained $Y$ tree.

\begin{lemma}\label{lem:rebuilt-distances}
For the fixed strong orientation of the rebuilt central subgraph $H$, every ordered pair \(u,v\in H\) satisfies $\dist_{\rbjt{H}}(u,v)\le\varphi(v)-\varphi(u)+d+T+K$. The additional term can be replaced by \(T+K+\varepsilon\) when $u\in X$ and $b(u)=r+1$, and by \(T+1-\varepsilon\) when $v\in Y$ and $a(v)=c+1$. When \(d\) is odd and both conditions hold, their actual directed distance is at most \(T+1\), except that $\dist_{\rbjt{H}}(Q_t,Q_{t+1})\le d+T-1$.
\end{lemma}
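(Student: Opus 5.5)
We follow the potential-and-correction argument of Lemma~\ref{lem:central-core}, now using the tree bounds \eqref{eq:rebuilt-tree-bounds} of Lemma~\ref{lem:rebuilt-core}. Each arc of $\rbjt{H}$ is written as its increase in $\varphi$ plus a correction. Arcs of the retained $p$-tree inside $X$ have correction zero, as before.

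An arc of the retained $Y$ tree from $v$ to its next vertex $v'$ has $\varphi(v')-\varphi(v)=b(v)-b(v')$. Its correction is therefore $1-b(v)+b(v')=\eta(v)-\eta(v')$. Summed along the segment from a crossing endpoint $y$ to $Q_{\nu(y)}$, these corrections telescope to $\eta(y)-\eta(Q_\nu)=\eta(y)$, since $\eta=0$ on $Q$.

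A noncritical crossing arc $x\to y$ has correction $a(x)+1+b(y)-\rho-d$. Reversed arcs of $Q$ have correction $2$ inside a part, and the reversed critical arc $Q_{c+1}\to Q_c$ has correction $d+2$.

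Rather than summing along an arbitrary simple path, we argue cycle by cycle. Concatenate the relevant pieces: from $u$ go forward along its cycle to $Q_\nu$, go back along $Q$ through the critical edge to $Q_\lambda$ of the target cycle, then go forward to $v$.

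The total correction is the backward $Q$ length plus the telescoped $\eta$ contributions plus the crossing corrections. The key estimate is $2\nu+\eta\le T$ from \eqref{eq:rebuilt-tree-bounds}. It bounds the backward segment from $Q_\nu$ to $Q_0$, of length at most $\nu$ and costing $2$ per edge, together with the $\eta$ excess of the source's tree path. This gives at most $T$ plus the critical $d$. The extra $K$ covers the $\eta$ of the possibly different cycle used by $v$, through $\eta\le K$. The crossing correction is nonpositive, since $a(x)+1+b(y)-\rho\le d$ holds as in Lemma~\ref{lem:central-core}: the partition inequalities still hold, with $b$ now the distance to $w$ and $\rho=a(w)$.

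For the refined bounds we mimic Lemma~\ref{lem:shallow-attachments}. Take an $X$ source with $b(u)=r+1$, and use the shortest $u$--$w$ path's first edge into $Y$, which is a crossing edge in $H$. The cost estimate now goes through the $Y$ tree. The excess is $\eta\le K$, and the arrival index $\nu$ satisfies $2\nu+\eta\le T$. This removes the $d$ term, leaving $T+K+\varepsilon$.

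For a $Y$ target with $a(v)=c+1$, its $p$-tree path is a genuine geodesic ending with a crossing edge. The final segment from $Q_j$ is exact, as in the old proof, leaving the $Q$ segment cost $2\nu\le T$ plus $1-\varepsilon$.

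When both conditions hold, concatenate the two retained segments with a reversed stretch of $Q$. The total is at most $2\nu+\eta+1\le T+1$, exactly as in the earlier $2(i-j)+1$ count. The excluded pair $(Q_t,Q_{t+1})$, where $c=t$ for odd $d$, needs a separate bound. We use a noncritical crossing edge $x\to y$ with the cycle $Q_t\to Q_\lambda\to x\to y\to Q_\nu\to Q_{t+1}$. It should have length $\le d+T-1$, using $m\le d$ together with $2\nu+\eta\le T$.

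The main obstacle is the bookkeeping of $\eta$ when the $Y$ tree is not geodesic. The corrections along $Y$-tree arcs need not be nonnegative individually. We must check that a simple directed path never accumulates more than one $\eta$ from a source cycle plus one $K$ from the target side. The monotonicity $\eta(v')\le\eta(v)$ established in Lemma~\ref{lem:rebuilt-core} is what makes the telescoping bound on each tree suffix valid.
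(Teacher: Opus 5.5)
Your potential-and-correction bookkeeping is the same mechanism as the paper's. Its entry and exit identities, such as $-\varphi(u)+m_f+\eta(y_f)+\nu_f$ for an $X$ source, are exactly $\varphi$-differences plus your telescoped corrections. For a source outside $Q\cap X$ and a target outside $Q\cap Y$, your count $2(\nu-\lambda)+d+\eta+(m_f-d)+K\le d+T+K$ is the paper's computation. The gap is that you never treat endpoints on $Q$, and there both your routes and your charging scheme fail as stated.

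First, take a source $Q_i\in X$ whose target exit starts at $Q_\lambda$ with $\lambda>i$. You must first run backwards to $Q_{\lambda_f}$ for a cycle with $\lambda_f\le i$ and go round it. This adds a correction $2(i-\lambda_f)$. It is absorbed only by the saving $-2\lambda$ of the later backward run to $Q_\lambda$, which is precisely the term you discard when you bound that run by the run from $Q_\nu$ to $Q_0$. When $\lambda\le i$ you should instead go directly backwards, getting $2i-2\lambda+K\le T+K$ since $2i\le 2c<T$.

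Second, take a target $Q_j\in Y$ with $j>\nu$. You need another crossing route $g$ with $\nu_g\ge j$, followed by a backward run $Q_{\nu_g}\to Q_j$ costing $2(\nu_g-j)$. This cost is not covered by ``$\eta\le K$ for the target cycle''. The charges must be swapped: $2\nu_g+\eta(y_g)\le T$ pays for $g$, the source pays only $\eta\le K$, and $2\nu-2j\le-2$ absorbs the source's $2\nu$. With both endpoints on $Q$, both detours may be needed. The bound $d+T+K$ then relies on $2i-2\lambda_f-2\lambda_g\le0$ and $2\nu_f-2j\le0$ holding simultaneously.

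The refined bounds have the same defect at $Q_c$ and $Q_{c+1}$. The vertex $Q_c$ has $b=r+1$. However, the segment of $Q$ is a shortest $Q_c$--$w$ path whose first edge is the critical edge, directed $Q_{c+1}\to Q_c$ in $H$, and nothing guarantees another shortest path starting with a noncritical crossing edge. Use direct backward routes along $Q$ instead; they are available because every $\lambda\le c$, and they give $T+K$. Similarly, $Q_{c+1}$ has $a=c+1$, but its $p$-tree path ends with that reversed edge. Since every $\nu\ge c+1$, the entry route followed backwards along $Q$ reaches $Q_{c+1}$ with additional term at most $T$. The odd case with both conditions needs the same substitutions, giving $2(t-\lambda_g)+1$ when $u=Q_t$ and $\eta(y_f)+2\nu_f-2t-1$ when $v=Q_{t+1}$.

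Finally, your closing remark is inverted. Each $Y$-tree correction $1-b(v)+b(v')$ lies in $\{0,1,2\}$, which is just the monotonicity $\eta(v')\le\eta(v)$. Telescoping along one segment is an identity and needs no monotonicity. The real danger is a route using two $Y$-tree segments and two backward runs of $Q$, which is exactly what happens in the endpoint cases above.
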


\begin{proof}
We first join entry and exit segments through $Q$ with its edges reversed. We then check sources or targets on $Q$, before treating the conditions $b(u)=r+1$ and $a(v)=c+1$ and the exceptional pair. All of these routes use the fixed trees from Lemma~\ref{lem:rebuilt-core}.

For a noncritical edge \(f=x_fy_f\), let \(m_f\le d\) be the quantity defined for a crossing edge in Lemma~\ref{lem:central-core}. Let $Q_{\lambda_f}$ be the last vertex of $Q$ on its $p$--$x_f$ tree path and $Q_{\nu_f}$ the first vertex of $Q$ on its $y_f$--$w$ tree path. Its forward path, whose internal vertices lie outside $Q$, has length \(m_f+\eta(y_f)+\nu_f-\lambda_f\). First suppose the source is not in $Q\cap X$ and the target is not in $Q\cap Y$. Choose \(f\) with its prefix through \(u\) when \(u\in X\), and \(g\) with its suffix through \(v\) when \(v\in Y\). For \(v\in X\), let $Q_{\lambda_v}$ be the last vertex of $Q$ on its $p$--$v$ tree path, taking \(\lambda_v=a(v)\) when $v\in Q$. The entry and exit segments have the following lengths:
\[
\begin{array}{r@{\;:\;}l@{\qquad}l}
u\in X&u\to Q_{\nu_f}&-\varphi(u)+m_f+\eta(y_f)+\nu_f\\
u\in Y&u\to Q_{\nu(u)}&-\varphi(u)+d+\eta(u)+\nu(u)\\
v\in X&Q_{\lambda_v}\to v&\varphi(v)-\lambda_v\\
v\in Y&Q_{\lambda_g}\to v&\varphi(v)+m_g-d+\eta(y_g)-\eta(v)-\lambda_g.
\end{array}
\]
These identities follow from the tree lengths, \(D=b+\eta\), and the definition of $m_f$; the second also holds for a source on $Q$, with an empty segment. By \eqref{eq:rebuilt-tree-bounds}, both entry lengths are at most \(-\varphi(u)+d+T-\nu\), and both exit lengths are at most \(\varphi(v)+K-\lambda\). Here \(\nu\ge c+1>\lambda\); inserting the backwards segment of $Q$ of length \(\nu-\lambda\) gives the claimed bound, with the extra saving \(2\lambda\).

We check the endpoints on $Q$ explicitly. For a source \(Q_i\in X\) and a target outside $Q\cap Y$, if \(\lambda\le i\), go directly backwards to \(Q_\lambda\). The additional term is at most \(2i+K-2\lambda\le T+K\). If \(\lambda>i\), choose \(f\) with \(\lambda_f\le i\), which exists by the left endpoint of $Q\cap H$. Use its cycle route before leaving at \(Q_\lambda\). Compared with the preceding calculation, the extra term is \(2i-2\lambda_f-2\lambda\le0\), so the general bound still holds.

Because $Q$ is directed towards smaller indices, reaching it below the target requires another forward crossing route. Now let the target be \(Q_j\in Y\), initially with the source outside $Q\cap X$. If the first vertex $Q_\nu$ of $Q$ reached by the entry path satisfies \(\nu\ge j\), the entry segment followed by $Q$ backwards has additional term at most \(T-2j\). If \(\nu<j\), choose \(g\) with \(\nu_g\ge j\), follow $Q$ backwards to reach \(Q_{\lambda_g}\), then its crossing route and $Q$ backwards to \(Q_j\). The coarser entry bound \(-\varphi(u)+d+K+\nu\) gives an additional term at most $d+K+2\nu-2j+(2\nu_g+\eta(y_g))-2\lambda_g\le d+K+T-2$.

If the source is also on $Q$, say \(Q_i\in X\), choose \(g\) with \(\nu_g\ge j\). When \(\lambda_g\le i\), the route through \(g\) has additional term $m_g-d+(2\nu_g+\eta(y_g))+2i-2\lambda_g-2j\le T$. Otherwise choose \(f\) with \(\lambda_f\le i\). If \(\nu_f\ge j\), use \(f\) instead. If \(\nu_f<j\), use both crossing routes; the additional term is
\[
m_f-d+m_g+\eta(y_f)+(2\nu_g+\eta(y_g))+(2i-2\lambda_f-2\lambda_g)+(2\nu_f-2j)\le d+K+T.
\]
The last two parentheses are nonpositive. This proves the general bound for all four combinations of endpoints on or outside $Q$.

For $u\in X\setminus V(Q)$ with $b(u)=r+1$, the first edge of a shortest $u$--$w$ path in $G$ crosses to \(Y\) and has \(m_f=a(u)-c\le\varepsilon\). All crossing edges are retained, so use this edge with its retained tree suffix. In both entry estimates, \(d\) is replaced by \(\varepsilon\); the above routes, including those ending in $Q\cap Y$, give \(T+K+\varepsilon\). The only vertex $u\in Q\cap X$ with $b(u)=r+1$ is \(Q_c\). For every crossing edge, $\lambda\le c$, so its direct routes along $Q$ give at most \(T+K\), including a target on $Q$.

For $v\in Y\setminus V(Q)$ with $a(v)=c+1$, the last edge of its selected \(p\)-tree path is a crossing edge \(g\), with \(y_g=v\) and \(m_g\le1-\varepsilon\). Thus the exit length is at most \(\varphi(v)+1-\varepsilon-d-\lambda_g\), with no error difference. It cancels the entry term \(d\), giving \(T+1-\varepsilon\). For a source in $Q\cap X$, use \(2i\le T\) in the direct case, and the same nonpositive correction \(2i-2\lambda_f-2\lambda_g\) otherwise. If the target is \(Q_{c+1}\), every index $\nu$ arising from a crossing edge is at least \(c+1\), so only the direct route to the target on $Q$ is needed; its additional term is at most \(T\).

Finally assume \(\varepsilon=1\), $u\in X$ with $b(u)=r+1$, and $v\in Y$ with $a(v)=c+1$. If neither is on $Q$, the quantities associated with their entry and exit edges are \(a(u)-t\) and zero, and the displayed segment identities give actual length $1+\eta(y_f)+2\nu_f-2\lambda_g\le T+1$.
If \(u=Q_t\) but \(v\) is outside $Q$, its selected \(p\)-tree predecessor has depth \(t\), and the direct route has length \(2(t-\lambda_g)+1\le T+1\). If \(v=Q_{t+1}\) but \(u\) is outside $Q$, the entry route followed backwards to \(v\) has length \(\eta(y_f)+2\nu_f-2t-1\le T+1\). For the remaining pair, take any noncritical \(f\), follow $Q$ backwards from \(Q_t\) to \(Q_{\lambda_f}\), use its crossing route, and return to \(Q_{t+1}\). The length is $2(\nu_f-\lambda_f)+m_f+\eta(y_f)-1\le d+T-1$. All routes use the same fixed tree orientation. Deleting closed subwalks can only shorten them.
\end{proof}

We now combine the rebuilt bounds within $H$ with the rooted paths already proved. The numerical condition in the next lemma is the only one needed for the final argument using a choice with minimum saving.

\begin{lemma}\label{lem:rebuilt-estimate}
Let the prescribed orientation and selected candidate paths have saving $\delta$ satisfying \eqref{eq:deficit-range}, and define $P=t+2$, $K=\lfloor\delta/2\rfloor+2$, and $T=2P+K$. Let $U$ be a connected set of real vertices with an attachment $\xi$ such that $a(\xi)=P$, and let $w\in U$ satisfy $a(w)=\rho$. Suppose every vertex $v$ of $U$ or its attachment set has a real tree path from $\xi$ of length $a(v)-P\le\ell$, and a $\xi$--$w$ geodesic has all its internal vertices in $U$. Suppose also that every path from $w$ to an attachment other than $\xi$, with internal vertices in $U$, has length at least $2\ell-K$. Keep the centre $p$ fixed, and choose the new shortest path $Q$ by concatenating a $p$--$\xi$ geodesic and this $\xi$--$w$ geodesic. Form the rebuilt central subgraph with its fixed trees and directed-cycle orientation, and use the same prescribed rooted orientations and path choices in its external components. If the new saving satisfies $\delta'\ge\delta$, the resulting orientation has diameter at most $L+d+18$.
\end{lemma}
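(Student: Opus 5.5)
We first apply Lemma~\ref{lem:rebuilt-core} with $P=t+2$, $\ell=r-2+\varepsilon$ and $K=\lfloor\delta/2\rfloor+2$. All of its hypotheses are among those assumed in the statement, so it yields the rebuilt $H$ with tree bounds \eqref{eq:rebuilt-tree-bounds} and $T=2P+K=2t+4+K$. Lemma~\ref{lem:rebuilt-distances} then gives the general additional term $d+T+K$ in $H$, reduced to $T+K+\varepsilon$ for an $X$ source with $b(u)=r+1$ and to $T+1-\varepsilon$ for a $Y$ target with $a(v)=c+1$. Since $w$ is a farthest vertex of the fixed centre $p$ and $Q$ is a $p$--$w$ geodesic, the partition, Lemma~\ref{lem:central-core}'s component statement, Lemma~\ref{lem:auxiliary-attachments} and all rooted-orientation lemmas apply verbatim with $b=\dist(w,\cdot)$. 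Hence the selected outside paths satisfy the same class bounds as in Section~\ref{sec:joining-paths}, with saving $\delta'$ in place of $\delta$. Lemma~\ref{lem:tail-estimates} only uses the partition inequalities, not geodesicity of the retained trees, so it holds unchanged.

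\emph{Even $d$.} We reuse the variable table \eqref{eq:even-path-costs} with these $T,K$ and $b_0=\delta'\ge\delta$; we may take $b_0=\delta$ by monotonicity. The largest entries are $2r+2+T+K-2b_0$ and $2r+T+2-\gamma-b_0$ (the zero-$X$-source, zero-$Y$-target corner). With $T+K=2t+4+2K\le2t+\delta+8$ and $2K\le\delta+4$, the first becomes at most $2r+2t+10-\delta$. Using \eqref{eq:gamma-bounds} together with $\delta\ge\beta$, and hence $2t-\delta\le 9$, this gives roughly $2r+19$. This is one unit too many, so the bound must be sharpened. The refinement is to use $T=2P+K$ only where \eqref{eq:rebuilt-tree-bounds} is really needed. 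Alternatively, we can split according to the parity of $\delta$: for even $\delta$ we have $2K=\delta+4$ exactly, and for odd $\delta$ we have $2K=\delta+3$. The remaining terms carry $-\gamma$ or $-b_0$ and are checked similarly against $d+18=2r+18$.

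\emph{Odd $d$.} The same joining gives common excesses of the form $T+K+$const$-2\gamma$ over the noncritical pairs, and $T+1$ actual paths for the double-zero pairs. The exceptional pair $(Q_t,Q_{t+1})$ uses $\dist\le d+T-1$ from Lemma~\ref{lem:rebuilt-distances} together with outside lengths at most $L-1-\gamma-\delta'$. Its excess is at most $d+T-2-\gamma-\delta'\le 2r+2t+3+K-\gamma-\delta$, and $K-\delta\le 2-\delta/2$ then reduces this via \eqref{eq:gamma-bounds} to at most $2r+1+18$.

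The main obstacle is the bookkeeping of constants. We have to show that every entry, after substituting $T=2t+4+K$ and $K=\lfloor\delta/2\rfloor+2$, is at most $d+18$ using only $\delta'\ge\delta\ge\beta$ and \eqref{eq:gamma-bounds}. In particular, the entries with $T+K$ and only one $b_0$ need the $\delta\ge\beta=(2t-9)_+$ lower bound to absorb $2t$. If a term is off by one, we would first try the sharper per-case bounds in the proof of Lemma~\ref{lem:rebuilt-distances}: the saving $2\lambda$, and the $T+1$ actual length. We use these in place of the uniform table.
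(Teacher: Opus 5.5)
Your proposal follows the paper's route: Lemmas~\ref{lem:rebuilt-core}, \ref{lem:rebuilt-distances} and \ref{lem:tail-estimates} feed the odd-case calculations of Lemma~\ref{lem:initial-estimate} and the table \eqref{eq:even-path-costs} with $b_0=\delta'\ge\delta$. The parity split you mention is exactly the paper's fix: writing $K-\delta=2-\lceil\delta/2\rceil$ with $\lceil\delta/2\rceil\ge(t-4)_+$ bounds the bottom-middle entry by $d+2t+10-2(t-4)_+\le d+18$, the double-zero entry by $d+17$, and the odd critical pair by $d+13$, so your other proposed refinements are unnecessary.

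One correction to your last paragraph: besides $\delta\ge\beta$, you also need the upper end $\delta\le2r-9$ of \eqref{eq:deficit-range}, so that $2K\le2r-5$ and $K\le r-3$. This controls the terms carrying $+2K$ without a compensating $-\delta$, such as the odd noncritical excess $2t+8+2K-2\gamma\le2r+13$ and the top-left entry of the table. It is also what lets you assert, rather than assume, that the two entries you single out dominate the other seven.
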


\begin{proof}
Lemma~\ref{lem:rebuilt-core} applies with $P=t+2$, $K=\lfloor\delta/2\rfloor+2$, and $T=2t+4+K$. All crossing edges are retained, so Lemma~\ref{lem:auxiliary-attachments} gives the same exact auxiliary coordinates and root bounds. Since the centre and radius are unchanged, Lemma~\ref{lem:ambient-savings} still applies for odd $d$, and Lemma~\ref{lem:independent-root-step} for even $d$. The same path-selection rules from Section~\ref{sec:joining-paths} therefore give $\delta'\ge\beta$. Use Lemma~\ref{lem:rebuilt-distances} for paths in $H$ and Lemma~\ref{lem:tail-estimates} for the restored outside paths; the latter uses true distances, even when the retained $Y$ tree is not geodesic. All paths in this comparison belong to the new orientation. We use $2K\le\delta+4$, $K\le r-3$, and $K-\delta=2-\lceil\delta/2\rceil$. Since $\delta\ge\beta$, also $\lceil\delta/2\rceil\ge(t-4)_+$: this is immediate for $t\le4$, while for integer $t\ge5$ it follows from $\delta\ge2t-9$.

For odd $d$, the ordinary additional term for paths in $H$ is $d+2P+2K$, the zero-$X$ addition is $2P+2K+1$, and the zero-$Y$ addition is $2P+K$. The noncritical double-zero actual path length in $H$ is at most $2P+K+1$. Thus the four noncritical calculations of Lemma~\ref{lem:initial-estimate}, with $2h$ replaced by $2P+2K$, give excess over $L$ at most $2t+8+2K-2\gamma\le\delta+12+(2t-2\gamma)\le2r+13$. For the critical pair the restored outside paths total at most $L-1-\gamma-\delta'$, and the path length in $H$ is at most $d+2P+K-1$. Its excess is at most
\[
\begin{aligned}
2r+2t+3+K-\gamma-\delta'&\le d+2t+4-\gamma-\lceil\delta/2\rceil\\
&\le d+2t+4-\gamma-(t-4)_+\le d+13.
\end{aligned}
\]
For the last inequality, $t\le4$ gives $2t+4\le12$, whereas $t\ge5$ gives $t+8-\gamma\le13$ by \eqref{eq:gamma-bounds}.

For even $d$, use the calculation \eqref{eq:even-path-costs} with $b_0=\delta'$, the saving in the new orientation. Replacing $\delta'$ by $\delta$ only increases its entries. The positive-$X$/positive-$Y$ entry exceeds the top-left entry by $2r+2+\gamma-\delta\ge11+\gamma>0$, and exceeds the bottom-left entry by five. In the first two columns a zero-$X$ return reduces the corresponding positive-$X$ entry by three. The top-right entry is smaller than the top-middle entry by $K$, whereas the double-zero entry exceeds the top-middle entry by $2r-3-K\ge r>0$. The bottom-right entry is at most the bottom-middle entry. Thus all nine entries are bounded by the maximum of $d+T+2-\gamma-\delta$ and $d+2+T+K-2\delta$. After substitution, these are respectively
\[
\begin{aligned}
d+2t+8-\gamma-\lceil\delta/2\rceil&\le d+2t+8-\gamma-(t-4)_+\le d+17,\\
d+2t+10-2\lceil\delta/2\rceil&\le d+2t+10-2(t-4)_+\le d+18.
\end{aligned}
\]
Indeed, for $t\le4$ the two excesses over $d$ are at most $16$ and $18$; for $t\ge5$ they are at most $t+12-\gamma\le17$ and $18$. They too are at most $d+18$.
\end{proof}

\begin{proof}[Proof of Theorem~\ref{thm:main}]
We first dispose of the small-diameter and minimum-radius cases. For the remaining graphs, Lemma~\ref{lem:initial-estimate} either gives the bound directly or leaves a saving in the range where Lemma~\ref{lem:farthest-extraction} applies. We then use the choice with minimum saving described above.

For $d=0$ the assertion is immediate. For $1\le d\le2$, Theorem~\ref{thm:radius} at a centre of eccentricity $\rho\le d$ gives diameter at most $2\rho(\rho+1)\le12<18$. Assume $d\ge3$. If $d=2r$ and $t=0$, the same theorem gives $2r(r+1)=L+d$. Otherwise choose $p,q,Q$ by Lemma~\ref{lem:axis-choice}. Lemma~\ref{lem:initial-estimate} settles the edge-cover alternative. Orient all edges and select the candidate paths and their actual attachments as in Section~\ref{sec:joining-paths}; if the resulting saving satisfies $\delta_0\ge r+t-8$, that lemma also finishes the proof.

Suppose instead that $\delta_0$ lies in \eqref{eq:deficit-range}, and keep the centre $p$ fixed. Consider the initial choices and all choices obtained from them after finitely many applications of Lemmas~\ref{lem:farthest-extraction} and~\ref{lem:rebuilt-core}. Each application starts with a saving in \eqref{eq:deficit-range}; after rebuilding, orient all edges inside and outside $H$, add all permitted improved paths to the candidate families, and select paths at their actual attachments as in Section~\ref{sec:joining-paths}. Include the resulting choices even when their new saving lies outside \eqref{eq:deficit-range}. Each choice includes the farthest vertex, the shortest path $Q$, $H$, all edge directions, the candidate paths, and the selected actual attachments. The initial choices are included, and all savings are nonnegative integers. Therefore a minimum saving $\delta$ is attained. Since $\beta\le\delta\le\delta_0$, the choices attaining this minimum still satisfy \eqref{eq:deficit-range}.

Apply Lemma~\ref{lem:farthest-extraction} to choices attaining this minimum, and rebuild at the resulting farthest vertex $w$. After orienting all edges and adding and selecting the candidate paths as above, the resulting choices are also obtained from the initial ones by finitely many rebuildings. Their saving therefore satisfies $\delta'\ge\delta$. Lemma~\ref{lem:rebuilt-estimate} supplies the required orientation. The centre and radius, and hence $r,t,\varepsilon$, remain fixed. Only the initial choices use Lemma~\ref{lem:axis-choice}; after each rebuilding, Lemma~\ref{lem:rebuilt-distances} applies in the resulting orientation.
\end{proof}

\section*{Acknowledgements}

Y. Chen is supported by the National Key R\&D Program of China (No. 2024YFA1013900) and the National Natural Science Foundation of China (No. 12471327).
J. Ai is supported by the National Natural Science Foundation of China (No. 12522117) and the Fundamental and Interdisciplinary Disciplines Breakthrough Plan of the Ministry of Education of China (JYB2025XDXM207).  H. Lei is supported by the National Natural Science Foundation of China (Nos. 12371351, 12431013), the Natural Science Foundation of Tianjin (No. 24JCYBJC01670), and the Fundamental Research Funds for the Central Universities, Nankai University (No. 63263098). X. Wang is supported by the National Natural Science Foundation of China (No. 12401447).

\section*{AI Declaration}

We used ChatGPT to assist with the exploration of proof strategies and the preparation of this manuscript. The quadratic coefficient $1/2$ was determined independently by Ai, Feng, Lei and Ren~\cite{AiFengLeiRen2026} and by Chen, Lin, Wang and Zheng~\cite{ChenLinWangZheng2026}, using different constructions of a central subgraph. We later combined the two proof methods to determine the linear coefficient $1$ and prepared this paper jointly.
All mathematical arguments and proofs in the final
manuscript were written and verified by the authors.

\end{document}